%
%

\documentclass[MSNbibl,number,citesort,seceqn,dvips]{arxbj}

%

\aid{0}
\volume{20}
\issue{3}
\pubyear{2014}
\firstpage{1165}
\lastpage{1209}
\doi{10.3150/13-BEJ518} 

\makeatletter
\newcommand{\rrvert}{\vert}
\newcommand{\llvert}{\vert}
\newtheorem{thrm}{Theorem}[section]

\newremark{remk}{Remark}[section]

\newtheorem{lmma}{Lemma}[section]
\newtheorem{prop}{Proposition}[section]


\def\msf{\mathcal{F}}

\def\me{\mathbb{E}}
\def\mr{\mathbb{R}}
\def\mn{\mathbb{N}}
\def\mp{\mathbb{P}}

\def\bbq{\mathbb{Q}}
\def\calD{\mathcal{D}}
\def\calH{\mathcal{H}}
\def\calI{\mathcal{I}}
\def\calJ{\mathcal{J}}
\def\calK{\mathcal{K}}
\def\calR{\mathcal{R}}
\def\calS{\mathcal{S}}
\def\bbr{\mathbb{R}}
\def\bbe{\mathbb{E}} 
\def\bbp{\mathbb{P}}

\newcommand{\R}{\mathbb{R}}

\def\ed{\,{\stackrel{\frak{D}}{=}}\,}

\makeatother

\begin{document}
\begin{frontmatter}

\title{Small-time expansions for local {jump-diffusion} models with
infinite jump~activity}
\runtitle{Small-time expansions for local jump-diffusions}

\begin{aug}
\author[a]{\inits{J.E.}\fnms{Jos\'e E.} \snm{Figueroa-L\'opez}\corref{}\thanksref{a}\ead[label=e1]{figueroa@purdue.edu}},
\author[c]{\inits{Y.}\fnms{Yankeng} \snm{Luo}\thanksref{c}\ead[label=e2]{luo7@purdue.edu}}
\and
\author[b]{\inits{C.}\fnms{Cheng}~\snm{Ouyang}\thanksref{b}\ead[label=e3]{couyang@math.uic.edu}}
\runauthor{J.E. Figueroa-L\'opez, Y. Luo and C. Ouyang} 
\address[a]{Department of Statistics, Purdue University, 250 N.
University Street, West Lafayette, IN 47907, USA.
\printead{e1}}

\address[c]{Department of Mathematics, Purdue University, 250 N.
University Street, West Lafayette, IN 47907, USA.
\printead{e2}}

\address[b]{Department of Mathematics, Statistics, and Computer
Science, University of Illinois at Chicago, Chicago, IL 60607, USA.
\printead{e3}}

\end{aug}

\received{\smonth{8} \syear{2012}}
\revised{\smonth{1} \syear{2013}}

%
\begin{abstract}
We consider a Markov process {$X$}, which is the solution of a
stochastic differential equation driven by a L\'evy process $Z$ and an
independent Wiener process $W$. Under some regularity conditions,
including non-degeneracy of the diffusive and jump components of the
process as well as smoothness of the L\'evy density of $Z$ {outside any
neighborhood of the origin}, we obtain a small-time second-order
polynomial expansion for the tail distribution and the transition
density of the process $X$. Our method of proof combines a recent
{regularizing technique for deriving the analog small-time expansions
for a L\'evy process with some new tail and density estimates for
jump-diffusion processes with small jumps based on the theory of
Malliavin calculus, flow of diffeomorphisms for SDEs, and
time-reversibility}. As an application, the leading term for
out-of-the-money option prices in short maturity under a local
jump-diffusion model is also derived.
\end{abstract}

%
\begin{keyword}
\kwd{local jump-diffusion models}
\kwd{option pricing}
\kwd{small-time asymptotic expansion}
\kwd{transition densities}
\kwd{transition distributions}
\end{keyword}

\end{frontmatter}

\section{Introduction}\label{sec1}

The {small-time asymptotic} behavior {of} the transition densities of
Markov processes $\{X_{t}(x)\}_{t\geq{}0}$ with deterministic initial
condition $X_{0}(x)=x$ has been {studied} for a long time, {with a
certain focus to consider} either purely-continuous or
purely-discontinuous processes. Starting from the problem of existence,
there are several sets of sufficient conditions for the existence of
the transition density {of $X_{t}(x)$}, {denoted hereafter $p_{t}(\cdot
;x)$}. {A~stream in this direction} is based on the machinery of
Malliavin calculus, originally developed for continuous diffusions (see
the monograph Nualart \cite{Nualart}) and, then, extended to Markov process
with jumps (see {the monograph} Bichteler, Gravereaux and Jacod \cite
{BBG}). This approach can also
yield estimates of the transition density $p_{t}(\cdot;x)$ in small
time $t$. For purely-jump Markov processes, the key assumption is that
the L\'evy measure of the process admits a smooth L\'evy density. The
pioneer of this approach was L\'eandre \cite{Leandre}, who obtained the
first-order small-time asymptotic behavior of the transition density
for fully supported L\'evy densities. This result was extended in
Ishikawa \cite
{Ishikawa} to the case where the point $y$ cannot be reached with only
one jump from $x$ but rather with finitely many jumps, {while Picard
\cite
{Picard} developed a method that} can {also} {be applied to L\'evy
measures with a non-zero singular component} {(see also Picard \cite{Picardb}
and Ishikawa \cite{Ishikawa2001} for other related results)}.

The main result in L\'eandre \cite{Leandre} {states} that, {for $y\neq{}0$,}
\[
{\lim_{t\to{}0}\frac{1}{t} p_{t}(x+y;x)=g(x;y)},
\]
where ${g(x;y)}$ is the {so-called} L\'evy density of the process {$X$}
to be defined below (see~(\ref{LvyDsty0})). L\'eandre's approach
{consisted of first} separating the small jumps (say, those with sizes
smaller than an $\varepsilon>0$) and the large jumps of the underlying
L\'evy process, and {then conditioning} on the number of large jumps by
time $t$. Malliavin's calculus {was} then applied to control the
{resulting} density given that there is no large jump. For $\varepsilon
>0$ small enough, the term when there is only one large jump {was}
proved to be equivalent, up to a remainder of order $\mathrm{o}(t)$, to the
{term} resulting {from a model in which there is} no small-jump
component at all. Finally, the terms when there is more than one large
jump {were} shown to be {of order} $\mathrm{O}(t^{2})$.

Higher-order expansions of the transition density of Markov processes
with jumps have been considered quite recently and only for processes
with finite jump activity (see, e.g., Yu \cite{Yu07}) or for L\'evy
processes with possibly infinite {jump-activity}. We focus on the
literature of the latter case due to its close connection to the
present work. {R\"uschendorf and Woerner \cite{Ruschendorf} was the
first work to consider}
higher-order expansions for {the transition densities of} L\'evy
processes using L\'eandre's approach. Concretely, the following
expansion for the transition densities $\{p_{t}(y)\}_{t\geq{}0}$ of a
L\'evy process $\{Z_{t}\}_{t\geq{}0}$ was proposed {therein}:
%
%
\begin{equation}
\label{WoernerClaim} p_{t}(y) :=\frac{\mathrm{d}}{\mathrm{d}y}\bbp(Z_{t}
\leq{}y)=\sum_{n=1}^{N-1} a_{n}(y)
\frac{t^{n}}{n!} +\mathrm{O}\bigl(t^{N}\bigr)\qquad (y\neq0, N\in\mathbb{N}).
\end{equation}
As in L\'eandre \cite{Leandre}, the idea was to justify that each
higher-order
term (say, the term corresponding to $k$ large jumps) can be replaced,
up to a remainder of order $\mathrm{O}(t^{N})$, by the resulting density as if
there were no small-jump component.
However, this {approach is able to produce the correct expressions for
the higher-order coefficients $a_{2}(y),\ldots$ only in the compound
Poisson case (cf. Figueroa-L\'opez and
Houdr\'e \cite{FigHou2008})}. The {problem} was subsequently
{resolved} in Figueroa-L\'opez, Gong and
Houdr\'e {\cite{FigGongHou2011} (see Section~6 therein as well as
Figueroa-L\'opez and
Houdr\'e \cite{FigHou2008} for a preliminary related result), using a new
approach, under} the assumption that the L\'evy density of the L\'evy
process $\{Z_{t}\}_{t\geq{}0}$ is sufficiently smooth {and bounded
outside any neighborhood of the origin.
There are two key ideas in Figueroa-L\'opez, Gong and
Houdr\'e \cite{FigGongHou2011}, Figueroa-L\'opez and
Houdr\'e \cite{FigHou2008}}.
{Firstly}, instead of working directly with the transition densities,
the following analog expansions for the tail probabilities were first obtained:
%
%
\begin{equation}
\label{WoernerClaim2} \bbp(Z_{t}\geq{}y)=\sum
_{n=1}^{N-1} {A}_{n}(y) \frac{t^{n}}{n!}
+t^{N}\calR_{t}(y)\qquad (y>0, N\in\mathbb{N}),
\end{equation}
where $\sup_{0<t\leq{}t_{0}}\llvert \calR_{t}(y)\rrvert <\infty$,
for some
$t_{0}>0$. {Secondly}, by considering a smooth thresholding of the
large jumps (so that the density of large jumps {is} smooth) and
conditioning on the size of the first jump, it was possible to
{regularize} the discontinuous functional $\mathbf{1}_{\{Z_{t}\geq{}x\}}$
and, subsequently, proceed to use an iterated Dynkin's formula {(see
Section~\ref{DynkSect} below for more information)} to expand the
resulting smooth moment functions $\mathbb{E}(f(Z_{t}))$ as a power
series in $t$. Equation (\ref{WoernerClaim}) was then obtained by
differentiation of~(\ref{WoernerClaim2}), after justifying that the
functions ${A}_{n}(y)$ and the remainder $\calR_{t}(y)$ were
differentiable in $y$.

The results {and techniques} described in the previous paragraph open
the door to the study of higher-order expansions for the transition
densities of more general Markov models with infinite {jump-activity}.
We take the analysis one step further and consider a jump-diffusion
model with {non-degenerate} diffusion and jump components. Our analysis
can also be applied to purely-discontinuous processes as in L\'eandre
\cite
{Leandre}, but we prefer to {consider} a ``mixture model'' due to its
relevance in financial applications where {empirical evidence supports
models containing} both continuous and jump components (see Section~\ref{FirstOptPriceSect} below for detailed references in this direction).
More concretely, we consider the following stochastic differential
equations (SDE) driven by a Wiener process $\{W_{t}\}_{t\geq{}0}$ and
an independent pure-jump L\'evy process $\{Z_{t}\}_{t\geq{}0}$:
%
%
\begin{eqnarray}
\label{SE0} X_{t}(x)&=&x+{\int_{0}^{t}
b\bigl(X_{u}(x)\bigr)\,\mathrm{d}u}+ \int_{0}^{t}
\sigma \bigl(X_{u}(x) \bigr)\,\mathrm{d}W_{u}
\nonumber
\\[-8pt]
\\[-8pt]
\nonumber
& &{}+\sum_{u\in(0,t]\dvt |\Delta Z_{u}|\geq{}1}\gamma \bigl(X_{u^{-}}(x),
\Delta Z_{u} \bigr)+\sum^{c}_{u\in(0,t]\dvt 0<|\Delta
Z_{u}|\leq{}1}
\gamma \bigl(X_{u^{-}}(x),\Delta Z_{u} \bigr).
\end{eqnarray}
Here, $\Delta Z_{u}:=Z_{u}-Z_{u^{-}}:=Z_{u}-\lim_{s\nearrow t} Z_{s}$
denotes the jump {of} $Z$ at time $u$, {while $\sum^c$ denotes
the compensated Poisson sum of the terms therein. The functions $b,
\sigma\dvtx \mr\to\mr, \gamma\dvtx \mr\times\mr\to\mr$ are some suitable
deterministic functions so that (\ref{SE0}) is well-posed.}

As it will be evident from our work, an important difficulty to deal
with the model (\ref{SE0}) arises from the more complex interplay of
the jump and continuous components. In particular, conditioning on the
first ``big jump'' of $\{X_{s}(x)\}_{s\leq{}t}$ leads us to consider the
short-time expansions of the tail probability of a SDE with random
initial value $\tilde{J}$, which creates important, albeit interesting,
subtleties. More concretely, in the case of a L\'evy process ({i.e.,}
when $b$, $\sigma$, and $\gamma$ above are state-independent),
conditioning on the first big jump naturally leads to analyzing the
small-time expansion of the tail probability $\bbp(X_{t}^{\varepsilon
}(x)+\tilde{J}\geq x+y)$, where $\{X_{s}^{\varepsilon}(x)\}$ stands for
the ``small jump'' component of $\{X_{s}(x)\}$ (see the end of Section~\ref{SectIntro} for the terminology). This task is relatively simple
to handle since the smooth density of $\tilde{J}$ ``regularizes'' the
problem. By contrast, in the general local jump-diffusion model,
conditioning on the first big jump leads to consider $\bbp
(X_{t}^{\varepsilon}(x+\tilde{J})\geq x+y)$, a problem that does not
allow a direct application of Dynkin's formula. Instead,\vadjust{\goodbreak} to obtain the
second-order expansion of the latter tail probability, we need to rely
on smooth approximations of the tail probability building on the
theoretical machinery of the flow of diffeomorphisms for SDEs and
time-reversibility.

Under {certain regularity} conditions on $b, \sigma$ and $\gamma$, as
well as the L\'evy measure $\nu$ of $Z$, we show the following second-order expansion (as $t\to{}0$) for the tail distribution of $\{X_t(x)\}
_{t\geq{}0}$:
%
%
\begin{equation}
\label{tail-intro0} \bbp\bigl(X_{t}(x)\geq{}x+y\bigr)= t
A_{1}(x;y)+ \frac{t^{2}}{2}A_{2}(x;y)+\mathrm{O}
\bigl(t^{3}\bigr)\qquad \mathrm{for}\ x\in \mr,  y>0.
\end{equation}
{The assumptions required for (\ref{tail-intro0}) include boundedness
and sufficient smoothness of the SDE's coefficients as well as
non-degeneracy conditions on $|\partial_{\zeta} \gamma(x,\zeta)|$ {and
$|1+\partial_{x} \gamma(x,\zeta)|$}.}
As in L\'eandre \cite{Leandre}, the key assumption on the L\'evy
measure $\nu$ of
$Z$ is that this admits a density $h\dvtx \bbr\setminus\{0\}\to\bbr_{+}$
that is bounded and sufficiently smooth outside any neighborhood of the
origin. In that case, the leading term $A_{1}(x;y)$ depends only on the
jump component of the process as follows
\[
A_{1}(x;y)=\nu \bigl( \bigl\{\zeta\dvt \gamma(x,\zeta)\geq{}y \bigr\}
\bigr)=\int_{\{\zeta:\gamma(x,\zeta)\geq{}y\}}h(\zeta)\,\mathrm{d}\zeta.
\]
The second-order term $A_2(x;y)$ admits a {more complex} (but explicit)
representation, {which enables us, for instance, to precisely
characterize} the effects of the drift $b$ and the diffusion $\sigma$
of the process in the likelihood of a ``large'' positive move (say, a
move of size more than $y$) during a short time period $t$ (see Remark
\ref{RemkSnsbty} below for further details).

Once the asymptotic expansion for tail distribution is obtained, we
{proceed to obtain} a second-order expansion for the transition density
function $p_{t}(y;x)$.
{As expected from taking formal differentiation of the tail expansion
(\ref{tail-intro0}) with respect to $y$}, the leading term of
{$p_{t}(x+y;x)$} is of the form {$t g(x;y)$ for $y>0$}, where
{$g(x;y)$} is the so-called L\'evy density of the process $\{X_{t}(x)\}
_{t\geq{}0}$ defined by
%
%
\begin{equation}
\label{LvyDsty0} g(x;y):= -\frac{\partial}{\partial y} \nu\bigl(\bigl\{\zeta\dvt \gamma(x,
\zeta )\geq {}y\bigr\}\bigr) \qquad (y>0),
\end{equation}
{while the second-order term takes the form $-\partial_{y}A_{2}(x;y)
t^{2}/2$.} One of the main {subtleties here arises from attempting to
control the density of $X_{t}(x)$ given that there is no ``large''
jump}. {To this end,} we generalize the result in L\'eandre \cite
{Leandre} to the
case where there is a {non-degenerate} diffusion component. Again,
Malliavin calculus is proved to be the key tool for this task.

{Let us briefly make some remarks about the practical relevance of our
results. Short-time asymptotics for the transition densities and
distributions of Markov processes are important tools in many
applications such as non-parametric estimation methods of the model
under high-frequency sampling data and numerical approximations of
functionals of the form $\Phi_{t}(x):=\bbe (\phi(X_{T}(x) ))$.
In many of these applications, a certain discretization of the
continuous-time object under study is needed and, in that case,
short-time asymptotics are important not only in developing such
discrete-time approximations but also to determine the rate of
convergence of the discrete-time proxies to their continuous-time
counterparts.\vadjust{\goodbreak}

As an instance of the applications referred to in the previous
paragraph, a problem that has received a great deal of attention in the
last few years is the study of small-time asymptotics for option prices
and implied volatilities (see, e.g., Gatheral et~al. \cite
{Gatheral2009}, Feng, Forde and
Fouque~\cite
{FengFordeFouque}, Forde and Jacquier \cite{FordeJacquier}, Berestyki,
Busca and
Florent \cite{Busca22004}, Figueroa-L\'{o}pez and Forde~\cite
{FLF11}, Roper~\cite{Rop10}, Tankov \cite{Tnkv10}, Gao and Lee \cite
{GL11}, Muhle-Karbe and Nutz \cite{Muhle}, Figueroa-L\'opez, Gong and
Houdr\'e \cite
{FigGongHou2011}). As a byproduct of the asymptotics for the tail
distributions (\ref{tail-intro0})}, we derive {here} {the leading term
of the small-time expansion for the arbitrage-free prices of
{out-of-the-money}} European call options. Specifically, let $\{S_t\}
_{t\geq0}$ be the stock price process and denote $X_t=\log S_t$ for
each $t\geq0$. {We assume that $\bbp$ is the option pricing measure
and that under this measure} the process {$\{X_t\}_{t\geq0}$} is of the
form in (\ref{SE0}). Then, we prove that
%
%
\begin{equation}
\label{EOTMOPr} \lim_{t\rightarrow0}\frac{1}{t}
\me(S_t-K)_+=\int_{-\infty}^\infty
\bigl(S_0\mathrm{e}^{\gamma(x,\zeta)}-K \bigr)_+h(\zeta)\,\mathrm{d}\zeta,
\end{equation}
{which extends the analog result for exponential L\'evy model (cf.
Roper \cite
{Rop10} and Tankov \cite{Tnkv10})}. {A related paper is Levendorskii
\cite{Levendorskii},
where (\ref{EOTMOPr}) was obtained for a wide class of multi-factor L\'
{e}vy Markov models under certain technical conditions (see Theorem 2.1
therein), including the requirement that $\lim_{t\to{}0} \mathbb{E}
(S_{t}-K)_{+}/t$ exists in the ``out-of-the-money region'' and some
stringent integrability conditions on the L\'evy density $h$.}

The paper is organized as follows. In Section~\ref{SectIntro}, we
introduced the model and the assumptions needed for our results. The
probabilistic tools, such as the iterated Dynkin's formula as well as
tail estimates for semimartingales with bounded jumps, are {presented}
in Section~\ref{PrbTlsSec}. The main results of the paper are then
{stated} in Sections~\ref{DistrTailSect} and \ref{ExpDstSect}, where
the second-order expansion for the tail distributions and the
transition densities are {obtained}, respectively. The application of
the expansion for the tail distribution to option pricing in local
jump-diffusion financial models is {presented} in Section~\ref{FirstOptPriceSect}. The proofs of our main results as well as some
preliminaries of Malliavin calculus on Wiener--Poisson spaces are
{given} in several appendices.

\section{{Setup, assumptions and notation}}\label{SectIntro}
{Throughout, $C^{\geq{}1}_{b}$ (resp., $C^{\infty}_{b}$) represents the
class of continuous (resp., bounded) functions with bounded and
continuous partial derivatives of arbitrary order $n\geq{}1$. We} let
{$Z:=\{Z_{t}\}_{t\geq{}0}$} be a {pure-jump L\'evy process with L\'evy
measure $\nu$} and $\{W_{t}\}_{t\geq{}0}$ be a Wiener process
independent of $Z$, both of which are defined on a complete probability
space $(\Omega, \msf, \mp)$, equipped with the natural filtration
$({\msf_{t}})_{t\geq{}0}$ generated by $W$ and $Z$ and augmented by all
the null sets in
${\msf}$ so that it satisfies the \textit{usual conditions} (see, e.g.,
Chapter I in Protter \cite{Protter}). {The jump measure of the process
$Z$ is
denoted by $M(\mathrm{d}u,\mathrm{d}\zeta):=\#\{u>0\dvt (u,\Delta Z_{u})\in du\times d\zeta
\}
$, where $\Delta Z_{u}:=Z_{u}-Z_{u^{-}}:=Z_{u}-\lim_{s\nearrow t}
Z_{s}$ denotes the jump $Z$ at time $u$. This is necessarily a Poisson
random measure on $\bbr_{+}\times\bbr\setminus\{0\}$ with mean measure
$\bbe M(\mathrm{d}u,\mathrm{d}\zeta)=\mathrm{d}u\nu(\mathrm{d}\zeta)$. The corresponding compensated random
measure is denoted $\bar{M}(\mathrm{d}u,\mathrm{d}\zeta):=M(\mathrm{d}u,\mathrm{d}\zeta)-\mathrm{d}u\,\nu(\mathrm{d}\zeta)$.}

As stated in the \hyperref[sec1]{Introduction}, in this paper, we consider the following
{local jump-diffusion model:
%
%
\begin{eqnarray}\label{SE1General}
\nonumber
X_{t}(x)&=&x+\int_{0}^{t} b
\bigl(X_{u}(x)\bigr)\,\mathrm{d}u+ \int_{0}^{t}
\sigma \bigl(X_{u}(x) \bigr)\,\mathrm{d}W_{u}
\\
&&{} +\int_{0}^{t}\int_{|\zeta|>1}
\gamma \bigl(X_{u^{-}}(x),\zeta \bigr)M(\mathrm{d}u,\mathrm{d}\zeta)
\\
&&{} +\int_{0}^{t}\int_{|\zeta|\leq{}1}
\gamma \bigl(X_{u^{-}}(x),\zeta \bigr) \bar{M}(\mathrm{d}u,\mathrm{d}\zeta),
\nonumber
\end{eqnarray}
where} $b, \sigma\dvtx \bbr\to\bbr$ and $\gamma\dvtx \mr\times\mr\to\mr
$ are
deterministic functions {satisfying suitable conditions under which
(\ref{SE1General}) admits a unique solution. Typical sufficient
conditions for (\ref{SE1General}) to be well-posed include linear
growth and Lipschitz continuity of the coefficients $b$, $\sigma$, and
$\gamma$ (see, e.g., Applebaum \cite{Applebaum}, Theorem 6.2.3,
Oksendal and Sulem \cite{OS}, Theorem 1.19)}.

{Below, we will make use of the following assumptions about $Z$:}
\begin{longlist}
\item[(C1)] The L\'evy measure $\nu$ of $Z$ has a $C^{\infty}(\bbr
\setminus\{0\})$ {strictly positive} density $h$ such that, for every
$\varepsilon>0$ and $n\geq{}0$,
%
%
\begin{equation}
\label{Cnd1} \sup_{|\zeta|>\varepsilon}\bigl|h^{(n)}(\zeta)\bigr|<\infty.
\end{equation}
\end{longlist}

\begin{remk}
{Condition (\ref{Cnd1})} is {actually} {needed} for the tail
probabilities of $\{X_{t}(x)\}_{t\geq{}0}$ to admit an expansion in
integer powers of time. Indeed, even in the simplest pure L\'evy case
($X_{t}(x)=Z_{t}+x$), it is possible to build examples where $\bbp
(Z_{t}\geq{}y)$ converges to $0$ at a fractional power of~$t$ in the
absence of (\ref{Cnd1})(ii) (see Marchal \cite{Marchal}).
\end{remk}

Throughout the paper, the {jump coefficient} $\gamma$ is assumed to
satisfy the following conditions:
\begin{longlist}[(C2)(a)]
\item[(C2)(a)] {$\gamma(\cdot,\cdot)\in C^{\geq{}1}_{b}(\bbr\times
\bbr)$
and $\gamma(x,0)=0$ for all $x\in\bbr$; }

\item[(C2)(b)] {There exists a constant $\delta>0$ such that $\llvert \partial_{\zeta}\gamma(x,\zeta)\rrvert \geq\delta$,
for all $x,\zeta\in\mr$.}
\end{longlist}
Both of the previous conditions were also imposed in L\'eandre \cite{Leandre}.
Note that (C2)(a) implies that, for any $\varepsilon>0$, there
exists $C_{\varepsilon}<\infty$ such that
%
%
\begin{equation}
\label{Cnd2} \sup_{x} \biggl\llvert \frac{\partial^{i} \gamma(x,\zeta)}{\partial
x^{i}}\biggr
\rrvert \leq C_{\varepsilon} |\zeta|
\end{equation}
for all $|\zeta|\leq{}\varepsilon$ and $i\geq{}0$.
Condition (C2)(b) is imposed so that, for each $x\in\bbr$, the
mapping $\zeta\to\gamma(x,\zeta)$ admits {an inverse function
$\gamma
^{-1}(x,\zeta)$ with bounded derivatives}. Note that (C2)(b)
together with the continuity of $\partial\gamma(x,\zeta)/\partial
\zeta
$ implies that the mapping $\zeta\to\gamma(x,\zeta)$ is either strictly
increasing or decreasing for all $x$.

We {will also require} the following {boundedness and non-degeneracy}
conditions:
\begin{longlist}[(C3)]
\item[(C3)] The functions {$b(x)$ and} $v(x):=\sigma^{2}(x)/2$ {belong
to} $C^{\infty}_{b}(\bbr)$.

\item[(C4)] There exists a constant $\delta>0$ such that, for all
$x,\zeta
\in\mr$,
%
%
\begin{equation}
\label{NndegncyCnd} \mathrm{(i)} \quad\biggl\llvert 1+\frac{\partial\gamma(x,\zeta)}{\partial
x}\biggr\rrvert \geq
\delta,\qquad \mathrm{(ii)}\quad \sigma(x)\geq{}\delta.
\end{equation}
\end{longlist}
%
\begin{remk}\label{JustNonDegCnd}
{Boundedness conditions of the type (C3) above are not
restrictive in practice. Indeed, on one hand, extremely large values of
$b$ and $\sigma$ will not typically make sense in a particular financial
or physical phenomenon in mind (e.g., a large volatility value $\sigma$
could hardly be justified financially). On the other hand, a stochastic
model with arbitrary (but sufficiently regular) functions $b$ and $v$
could be closely approximated by a model with $C^{\infty}_{b}$
functions $b$ and $v$. The condition} {(\ref{NndegncyCnd})(i), which was
also imposed in L\'eandre \cite{Leandre}, guarantees the a.s.
existence of a flow
$\Phi_{s,t}(x)\dvtx \bbr\to\bbr,x\to X_{s,t}(x)$ of diffeomorphisms for all
$0\leq s\leq{}t$ (cf. L\'eandre \cite{Leandre}), where here $\{
X_{s,t}(x)\}_{t\geq
{}s}$ is defined as in {(\ref{SE1General})} but with initial condition
$X_{s,s}(x)=x$.} {Finally, let us mentioned that condition {(C4)(ii)}
is used only for the density expansion, but not the tail expansion.}\vadjust{\goodbreak}
\end{remk}

As it is usually the case with L\'evy processes, we shall decompose $Z$
into a compound Poisson process and a process with bounded jumps. More
specifically, let $\phi_{\varepsilon}\in C^{\infty}(\bbr)$ be a
truncation function such that $\mathbf{1}_{|\zeta|\geq\varepsilon}\leq
\phi
_{\varepsilon}(\zeta)\leq\mathbf{1}_{|\zeta|\geq\varepsilon/2}$ and let
{$Z(\varepsilon):=\{Z_{t}(\varepsilon)\}_{t\geq{}0}$} and
{$Z'(\varepsilon):=\{Z'_{t}(\varepsilon)\}_{t\geq{}0}$} {be independent
L\'evy processes with} respective L\'evy densities
%
%
\begin{equation}
\label{EqDTrcLDsty} h_{\varepsilon}(\zeta):=\phi_{\varepsilon}(\zeta)h(\zeta)
\quad\mbox{and}\quad \bar{h}_{\varepsilon}(\zeta):=\bigl(1-\phi_{\varepsilon}(\zeta )
\bigr)h(\zeta).
\end{equation}
{Clearly, we have that
%
%
\begin{equation}
\label{FDLP} Z \ed Z'(\varepsilon)+Z(\varepsilon).
\end{equation}
The} process $Z'(\varepsilon)$, that we referred to as the small-jump
component of $Z$, is a pure-jump L\'evy process with jumps bounded by
$\varepsilon$. In contrast, the process $Z(\varepsilon)$, hereafter
referred to as the big-jump component of $Z$, is {taken to be} a
compound Poisson process with intensity of jumps $\lambda_{\varepsilon
}:=\int\phi_{\varepsilon}(\zeta) h(\zeta)\,\mathrm{d}\zeta$ and jumps {$\{
J_{i}^{\varepsilon}\}_{i\geq{}1}$} with probability density function
%
%
\begin{equation}
\label{trncDnsty} {\breve{h}}_{\varepsilon}(\zeta):=\frac{\phi_{\varepsilon}(\zeta)
h(\zeta)}{\lambda_{\varepsilon}}.
\end{equation}
Throughout the paper, {$\{\tau_{i}\}_{i\geq1}$ and $N:=\{
N_{t}^{\varepsilon}\}_{t\geq{}0}$, respectively, denote the jump arrival
times and the jump counting process of the compound Poisson process
$Z(\varepsilon)$,} and $J:=J^{\varepsilon}$ {represents a generic}
random variable with density ${\breve{h}}_{\varepsilon}(\zeta)$.

{The next result will be needed in what follows. The different
properties below follow from standard applications of the implicit
function theorem, and the required smoothness and non-degeneracy
conditions stated above. We refer the reader to Figueroa-L\'opez, Luo
and Ouyang~\cite{FLO11} for a
detailed proof.}
%
\begin{lmma}\label{LmED}
Under the conditions \textup{(C1)}, \textup{(C2)} and \textup{(C4)}, the
following statements hold:
\begin{enumerate}
\item Let $\tilde{\gamma}(z,\zeta):=\gamma(z,\zeta)+z$. Then, for each
$z\in\bbr$, the mapping $\zeta\to\tilde{\gamma}(z,\zeta)$ (equiv.
$\zeta\to\gamma(z,\zeta)$) is invertible and its inverse $\tilde
\gamma
^{-1}(z,\zeta)$ (resp., $\gamma^{-1}(z,\zeta)$) is $C^{\geq
1}_{b}(\bbr
\times\bbr)$.\vadjust{\goodbreak}

\item Both $\tilde\gamma(z,J^{\varepsilon})$ and $\gamma
(z,J^{\varepsilon})$ admit densities in $C^{\infty}_{b}(\bbr\times
\bbr
)$, denoted by $\widetilde\Gamma(\zeta;z):=\widetilde\Gamma
_{\varepsilon
}(\zeta;z)$ and $\Gamma(\zeta;z):=\Gamma_{\varepsilon}(\zeta;z)$,
respectively. Furthermore, they have the representation:
%
%
\begin{eqnarray}
\label{DstyTildeGamma} \widetilde\Gamma_{\varepsilon}(\zeta;z)&=&{
\breve{h}}_{\varepsilon
}\bigl(\tilde{\gamma}^{-1}(z,\zeta)\bigr)\biggl
\llvert \frac{\partial\gamma
}{\partial
\zeta} \bigl(z,\tilde\gamma^{-1}(z,\zeta) \bigr)
\biggr\rrvert ^{-1},
\\
\Gamma_{\varepsilon}(\zeta;z)&=&{\breve{h}}_{\varepsilon}\bigl({\gamma
}^{-1}(z,\zeta)\bigr)\biggl\llvert \frac{\partial\gamma}{\partial\zeta} \bigl(z,
\gamma^{-1}(z,\zeta) \bigr)\biggr\rrvert ^{-1}.
\label{DstyTildeGammab}
\end{eqnarray}
\item The {mappings $(z,\zeta)\to\bbp (\tilde\gamma
(z,J^{\varepsilon
})\geq{}\zeta )$ and $(z,\zeta)\to\bbp (\gamma
(z,J^{\varepsilon
})\geq{}\zeta )$} are $C^{\infty}_{b}(\bbr\times\bbr)$.
\item The mapping $z\to u:=z+\gamma(z,\zeta)$ admits {an} {inverse},
denoted hereafter $\bar{\gamma}(u,\zeta)$, that\vspace*{1pt} belongs to $C^{\geq
{}1}_{b}(\bbr\times\bbr)$.
\end{enumerate}
\end{lmma}

We finish this section with the definition of some important processes.
{Let $\widetilde{M}$ and $M':=M'_{\varepsilon}$ denote the jump measure
of the process $\widetilde{Z}:=Z(\varepsilon)+Z'(\varepsilon)$ and
$Z'(\varepsilon)$, respectively. For each $\varepsilon\in(0,1)$, we
construct a process $ \{\widetilde{X}_{s}(\varepsilon,x) \}
_{s\geq{}0}${, defined} as the solution of the SDE\looseness=-1
\begin{eqnarray*}
\widetilde{X}_{t}(\varepsilon,x)&=&x+\int
_{0}^{t} b\bigl(\widetilde {X}_{u}(
\varepsilon,x)\bigr)\,\mathrm{d}u+ \int_{0}^{t}\sigma \bigl(
\widetilde {X}_{u}(\varepsilon,x) \bigr)\,\mathrm{d}\widetilde{W}_{u}
\\
&&{} +\int_{0}^{t}\int_{|\zeta|>1}
\gamma \bigl(\widetilde {X}_{u^{-}}(\varepsilon,x),\zeta \bigr)
\widetilde{M}(\mathrm{d}u,\mathrm{d}\zeta )\\
&&{}+\int_{0}^{t}\int
_{|\zeta|\leq{}1} \gamma \bigl(\widetilde {X}_{u^{-}}(
\varepsilon,x),\zeta \bigr) \overline{\widetilde {M}}(\mathrm{d}u,\mathrm{d}\zeta),
\end{eqnarray*}\looseness=0
where $\overline{\widetilde{M}}$ is the compensated measure of
$\widetilde{M}$ and $\widetilde{W}$ is a Wiener process, which is
independent of~$\widetilde{Z}$. In terms of the {jumps of the}
processes $Z(\varepsilon)$ and $Z'(\varepsilon)$, we can express
$\widetilde{X}(\varepsilon,x)$ as
\begin{eqnarray}
\label{SE2General}
\nonumber
\widetilde{X}_{t}(\varepsilon,x)&=&x+\int
_{0}^{t} b_{\varepsilon
}\bigl(
\widetilde{X}_{u}(\varepsilon,x)\bigr)\,\mathrm{d}u+ \int_{0}^{t}
\sigma \bigl(\widetilde{X}_{u}(\varepsilon,x) \bigr)\,\mathrm{d}
\widetilde{W}_{u}
\nonumber
\\[-8pt]
\\[-8pt]
\nonumber
&&{} +\sum_{i=1}^{N_{t}^{\varepsilon}}\gamma \bigl(\widetilde
{X}_{\tau
_{i}^{-}}(\varepsilon,x),J_{i}^{\varepsilon} \bigr)+\int
_{0}^{t}\int \gamma \bigl(\widetilde{X}_{u^{-}}(x),
\zeta \bigr) \bar {M}'(\mathrm{d}u,\mathrm{d}\zeta ),
\end{eqnarray}
where $\bar{M}'$ is the compensated random measure {$\bar
{M}'(\mathrm{d}u,\mathrm{d}\zeta
):=M'(\mathrm{d}u,\mathrm{d}\zeta)-\bar{h}_{\varepsilon}(\zeta)\,\mathrm{d}u \,\mathrm{d}\zeta$} and
\[
{b_{\varepsilon}(x):=b(x)-\int_{|\zeta|\leq{}1}\gamma (x,\zeta
)h_{\varepsilon}(\zeta)\,\mathrm{d}\zeta}.
\]
Since $Z$ has the same distribution {law} as $\widetilde
{Z}:=Z(\varepsilon)+Z'(\varepsilon)$, the process $\{\widetilde
{X}_{t}(\varepsilon,x)\}_{t\geq{}0}$ has the same distribution as $\{
X_{t}(x)\}_{t\geq{}0}$. Hence, in order to obtain the short time
asymptotics of $\bbp(X_{t}(x)\geq{}x+y)$,\vadjust{\goodbreak} we {can (and will)} analyze
the behavior of $\bbp(\widetilde{X}_{t}(\varepsilon,x)\geq{}x+y)$. For
simplicity and with certain abuse of notation, we shall write from now
on $X(x)$ instead of $\widetilde{X}(\varepsilon,x)$ and $W$ instead of
$\widetilde{W}$.

Next,} we let $ \{{X}_{s}(\varepsilon,\varnothing,x) \}
_{s\geq
{}0}$ be the solution of the SDE:
%
%
\begin{eqnarray}
\label{SE2}
{X}_{s}(\varepsilon,\varnothing,x) &=&
x+{\int_{0}^{s}{b_{\varepsilon}
\bigl({X}_{u}(\varepsilon ,\varnothing ,x) \bigr)}\,\mathrm{d}u}+\int
_{0}^{s}\sigma \bigl({X}_{u}(
\varepsilon ,\varnothing ,x) \bigr)\,\mathrm{d}{W}_{u}
\nonumber
\\[-9pt]
\\[-9pt]
\nonumber
&&{} + {\int_{0}^{s} \int\gamma
\bigl({X}_{u^{-}}(\varepsilon,\varnothing ,x),\zeta \bigr)
\bar{M'}(\mathrm{d}u,\mathrm{d}\zeta)}.
\end{eqnarray}
As seeing from the representation (\ref{SE2General}), the law of the
process (\ref{SE2}) can be interpreted as the law of {$\{\widetilde
{X}_{s}(\varepsilon,x)\}_{0\leq s\leq{}t}=\{{X}_{s}(x)\}_{0\leq s\leq
{}t}$} conditioning on not having any ``big'' jumps during $[0,t]$. In
other words, denoting the law of a process $Y$ (resp., the conditional
law of $Y$ given an event~$B$) by $\mathcal{L}(Y)$ (resp., $\mathcal
{L}(Y |B)$), we have that, for each fixed $t>0$,
\[
\mathcal{L} \bigl( \bigl\{X_{s}(x) \bigr\}_{0\leq s\leq
{}t}
\vert N_{t}^{\varepsilon}=0 \bigr)=\mathcal{L} \bigl( \bigl\{
X_{s}(\varepsilon ,\varnothing,x) \bigr\}_{0\leq s\leq{}t} \bigr).
\]
Similarly, for a collection of times $0<s_{1}<\cdots<s_{n}$, let $
\{
X_{s}(\varepsilon,\{s_{1},\ldots,s_{n}\},x) \}_{s\geq{}0}$ be the
solution of the SDE:
\begin{eqnarray*}
X_{s}\bigl(\varepsilon,\{s_{1},\ldots,s_{n}\},x
\bigr)&:=& x+\int_{0}^{s} {b_{\varepsilon}
\bigl(X_{u}\bigl(\varepsilon,\{s_{1},\ldots,s_{n}
\},x\bigr)\bigr)}\,\mathrm{d}u
\\[-2pt]
& &{}+\int_{0}^{s}\sigma \bigl(X_{u}
\bigl(\varepsilon,\{s_{1},\ldots ,s_{n}\} ,x\bigr)
\bigr)\,\mathrm{d}W_{u}
\\[-2pt]
& &{}+\sum_{i\dvt s_{i}\leq{}s}\gamma\bigl( X_{s_{i}^{-}}\bigl(
\varepsilon,\{ s_{1},\ldots,s_{n}\},x\bigr),J_{i}^{\varepsilon}
\bigr)
\\[-2pt]
&&{} + {\int_{0}^{s} \int\gamma
\bigl({X}_{u^{-}}\bigl(\varepsilon,\{ s_{1},\ldots
,s_{n}\},x\bigr),\zeta \bigr) \bar{M'}(\mathrm{d}u,\mathrm{d}\zeta).}
\end{eqnarray*}
{From (\ref{SE2General})}, it {then} follows that
\[
\mathcal{L} \bigl( \bigl\{X_{s}(x) \bigr\}_{0\leq s\leq
{}t}
\vert N_{t}^{\varepsilon}=n,\tau_{1}=s_{1},
\ldots,\tau_{n}=s_{n} \bigr)= \mathcal{L} \bigl( \bigl
\{X_{s}\bigl(\varepsilon,\{s_{1},\ldots,s_{n}\} ,x
\bigr) \bigr\}_{0\leq s\leq{}t} \bigr).
\]
The previous two processes will be needed in order to implement L\'
eandre's approach in which the tail distribution $\bbp(X_{t}(x)\geq
{}x+y)$ is expanded in powers of time by conditioning on the number of
jumps of $Z(\varepsilon)$ by time $t$.\vspace*{-2pt}

\section{Probabilistic tools}\label{PrbTlsSec}\vspace*{-2pt}
Throughout, $C_{b}^{n}(I)$ (resp., $C_{b}^{n}$) denotes the class of
functions having continuous and bounded derivatives of order $0\leq
k\leq n$ in an open interval $I\subset\bbr$ (resp., in $\bbr$). Also,
$\|g\|_{\infty}=\sup_{y}|g(y)|$.\vspace*{-2pt}

\subsection{Uniform tail probability estimates}
The following general result will be important in the sequel.\vadjust{\goodbreak}

\begin{prop}\label{martingale-property} Let {$M$} be a Poisson random
measure on $\mr_+\times\mr_0$ {with mean measure $\bbe M(\mathrm{d}u,\mathrm{d}\zeta
)=\nu
(\mathrm{d}\zeta)\,\mathrm{d}t$} and {$\bar{M}$} be its compensated random measure. Let
$Y:=Y^{(x)}$ be the solution of the SDE
\begin{eqnarray*}
Y_t=x+\int_0^t
\bar{b}(Y_{s})\,\mathrm{d}s+\int_0^t \bar{
\sigma}(Y_{s})\,\mathrm{d}W_s+ \int_0^t
\int\bar{\gamma}(Y_{s-},\zeta){\bar{M}}(\mathrm{d}s,\mathrm{d}\zeta).
\end{eqnarray*}
Assume that {$\bar{b}(x)$ and $\bar{\sigma}(x)$ are uniformly bounded}
and {$\bar{\gamma}(x,\zeta)$ is such that, for a constant {$S\in
(0,\infty)$}, $\sup_{y}|\bar\gamma(y,\zeta)|\leq S(|\zeta|\wedge1)$,
for $\nu$-a.e. $\zeta$. In particular}, the jumps of $\{Y_t\}_{t\geq
{}0}$ are bounded by $S$, {and there exists a constant $k$ such that
the quadratic variation for the {martingale part of $Y$} is bounded by
$kt$ for any time $t$}.
Then there exists a constant {$C(S,k)$} depending on $S$ {and $k$},
such that, for {any fixed $p>0$ and} all $0\leq t\leq1$,
\[
\mp \Bigl\{\sup_{0\leq s\leq t}|{Y_s}-x|\geq{2} pS \Bigr\}
\leq{C(S,k)t^p}.
\]
\end{prop}
\begin{pf}
{Let
\[
{V_t}=\int_0^t \bar{
\sigma}(Y_{s})\,\mathrm{d}W_s+\int_0^t
\int\bar{\gamma }(Y_{s-},z){\bar{M}}(\mathrm{d}s,\mathrm{d}z)
\]
be the martingale part of $Y_t$. It is clear that ${V_t}$ is a
martingale with its jumps bounded by $S$. Moreover, in light of the
boundedness of $\bar\sigma$ and $\bar\gamma$, {its} quadratic variation
satisfies ${\langle V, V\rangle_t}\leq kt$, for some constant $k$. By
{equation (9) in Lepeltier and Marchal \cite{LM}}, we have
%
%
\begin{equation}
\mp \Bigl\{\sup_{0\leq s\leq t}|{V_s}|
\geq C \Bigr\} \leq{2\exp \biggl[- \lambda C+\frac{\lambda^2}{2}kt\bigl(1+\exp[
\lambda S]\bigr) \biggr]\qquad \mbox{for all } C, \lambda>0.}
\end{equation}
Now take {$C=2pS$ and $\lambda=|\log t|/2S$}, the claimed result
follows for the martingale part $V_t$ of~$Y_t$. {By {equation (9) in
Lepeltier and Marchal \cite{LM}} and the fact that the drift term is
bounded by $\|\bar{b}\|
_\infty t$, we have for all $C, \lambda>0$
\begin{eqnarray}
\label{martingale-est} \mp \Bigl\{\sup_{0\leq s\leq t}|{Y_s-x}|
\geq C \Bigr\}&\leq&\mp \Bigl\{\sup_{0\leq s\leq t}|{V_s}|
\geq C-t\|\bar{b}\|_\infty \Bigr\}
\nonumber
\\[-8pt]
\\[-8pt]
\nonumber
&\leq&{2\exp \biggl[- \lambda\bigl(C-\|\bar{b}\|_\infty t\bigr)+
\frac{\lambda
^2}{2}kt\bigl(1+\exp[\lambda S]\bigr) \biggr].}
\nonumber
\end{eqnarray}
Now take {$C=2pS$ and $\lambda=|\log t|/2S$}, the claimed result follows.}}
\end{pf}
As a direct corollary of the previous proposition, we have the
following {estimate for the tail probability of the} small-jump
component $\{X_{t}(\varepsilon,\varnothing,x)\}_{t\geq{}}$ of $X$ defined
in (\ref{SE2}). {We also provide a related estimate for the tail
probability of $\exp(\llvert X_{t}(\varepsilon,\varnothing,x)\rrvert )$,
which will be needed for the asymptotic result of option prices
discussed in Section~\ref{FirstOptPriceSect} below.}

\begin{lmma}\label{NLBS}Fix any $\eta>0$ and a positive integer $N$.
Then, under {the conditions \textup{(C2)--(C3)} of Section~\ref{SectIntro}},
there {exist} an {$\varepsilon:=\varepsilon(N,\eta)>0$} and
$C:=C(N,\eta
)<\infty$ such that
\begin{enumerate}
\item[(1)] For all $t<1$,
%
%
\begin{equation}
\label{FndBnd1} \sup_{0<\varepsilon'<\varepsilon,x\in\bbr} \bbp \bigl(\bigl|X_{t}\bigl(
\varepsilon ',\varnothing,x\bigr)-x\bigr|\geq{}\eta\bigr)<C
t^{N}.
\end{equation}
\item[(2)] For all $t<1$,
\[
\sup_{\varepsilon'< \varepsilon,x\in\mr}\int_{{\mathrm{e}^\eta}}^\infty \mp
\bigl(\mathrm{\mathrm{e}}^{|X_t(\varepsilon',\varnothing,x)-x|}\geq s \bigr)\,\mathrm{d}s <Ct^N.
\]
\end{enumerate}
\end{lmma}
\begin{pf}
The first statement is a special case of Proposition \ref
{martingale-property}, which can be applied in light of the boundedness
conditions (C3) as well as the condition (C2)(a).
To prove the second statement, we keep the notation of the proof of
Proposition \ref{martingale-property} and note that,\vadjust{\goodbreak} by (\ref
{martingale-est}), there exists a constant $C>0$ such that
\begin{eqnarray*}
\int_{{\mathrm{e}^\eta}}^\infty\mp\bigl\{{\bigl|X_t(
\varepsilon,\varnothing,x)-x\bigr|}\geq \log s\bigr\}\,\mathrm{d}s&\leq &C\int_{{\mathrm{e}^\eta}}^\infty
\exp{ \biggl[- \lambda\log s+\frac
{\lambda^2}{2}kt\bigl(1+\exp[\lambda\varepsilon]
\bigr) \biggr]}\,\mathrm{d}s
\\
&=&\frac{C{\mathrm{e}^\eta}}{(\lambda-1) {\mathrm{e}^{\lambda\eta}}}\exp{ \biggl[\frac
{\lambda^2}{2}kt\bigl(1+\exp[\lambda
\varepsilon]\bigr) \biggr].}
\end{eqnarray*}
Now it suffices to take $\lambda= |\log t|/2\varepsilon$ and
{$\varepsilon= \eta/2N$}.
\end{pf}

\subsection{Iterated Dynkin's formula}\label{DynkSect}
We now proceed to state a second-order iterated Dynkin's formula for
the ``small-jump component'' of $X$, $\{X_{t}(\varepsilon,\varnothing
,x)\}
_{t\geq{}0}$, defined in (\ref{SE2}). To this end, let us first recall
that the infinitesimal generator of $X(\varepsilon,\varnothing,x)$,
hereafter denoted by $L_{\varepsilon}$, can be written as follows
({cf. Oksendal and Sulem \cite{OS}, Theorem 1.22}):
%
%
\begin{eqnarray}
\label{InfGenSmallJumps} L_{\varepsilon} f(y)&:=& \calD_{\varepsilon}f(y)+
\calI_{\varepsilon
}f(y)\qquad\mbox{with}
\nonumber\\
\calD_{\varepsilon}f(y)&:=&\frac{\sigma
^{2}(y)}{2}f''(y)+b_{\varepsilon
}(y)
f'(y),
\\
\calI_{\varepsilon}f(y)&:=&\int \bigl(f\bigl(y+\gamma(y,\zeta )\bigr)-f(y)-
\gamma (y,\zeta)f'(y) \bigr)\bar{h}_{\varepsilon}(\zeta) \,\mathrm{d}\zeta.
\nonumber
\end{eqnarray}
{The following two alternative representations of $\calI_{\varepsilon
}f$ will be useful in the sequel:
%
%
\begin{eqnarray}
\label{UsRepIf1} \calI_{\varepsilon}f(y)&=&\int\int_{0}^{1}f''
\bigl(y+\gamma(y,\zeta )\beta \bigr) (1-\beta)\,\mathrm{d}\beta \bigl(\gamma(y,\zeta)
\bigr)^{2} \bar {h}_{\varepsilon
}(\zeta) \,\mathrm{d}\zeta
\\
\label{UsRepIf2} &=&\int\int_{0}^{1} \bigl[
f''\bigl(y+\gamma(y,\zeta\beta)\bigr) \bigl(
\partial_{\zeta}\gamma(y,\zeta\beta )\bigr)^{2}+f'
\bigl(y+\gamma(y,\zeta\beta)\bigr)\partial^{2}_{\zeta}\gamma
(y,\zeta\beta )
 \nonumber
 \\[-8pt]
 \\[-8pt]
 \nonumber
 &&\hspace*{28pt}{}- f'(y) \partial^{2}_{\zeta}\gamma(y,
\zeta\beta) \bigr](1-\beta)\,\mathrm{d}\beta \zeta^{2}\bar{h}_{\varepsilon}(
\zeta)\,\mathrm{d}\zeta.
\end{eqnarray}
In particular, from the previous representations, it is evident that
$\calI_{\varepsilon}f$} is well-defined whenever $f\in C^{2}_{b}$, in
view of (\ref{Cnd2}), which follows from our condition (C2)(a).\vadjust{\goodbreak}

The $n$-order iterated Dynkin's formula for the process $X(\varepsilon
,\varnothing,x)$ takes the generic form
%
%
\begin{equation}
\label{DynkinF} \hspace*{-10pt}\bbe f\bigl(X_{t}(\varepsilon,\varnothing,x)\bigr)=
\sum_{k=0}^{n-1}
\frac{t^{k}}{k!} L^{k}_{\varepsilon}f(x)+ \frac{t^{n}}{(n-1)!} \int
_{0}^{1} (1-\alpha)^{n-1}\bbe \bigl\{
L^{n}_{\varepsilon} f\bigl(X_{\alpha t}(\varepsilon,
\varnothing,x)\bigr) \bigr\} \,\mathrm{d}\alpha,
\end{equation}
where as usual $L_{\varepsilon}^{0}f=f$ and $L_{\varepsilon
}^{n}f=L_{\varepsilon}(L_{\varepsilon}^{n-1}f)$, $n\geq{}1$.
(\ref{DynkinF}) can be proved for $n=1$ using It\^o's formula (see
Oksendal and Sulem \cite{OS}, Theorem 1.23) while, {for a general
order $n$, it can be
proved by induction, provided that the iterated generators
$L_{\varepsilon}^{k}f$ satisfy sufficient smoothness and boundedness
conditions for any $k=0,\ldots,n$. The next lemma explicitly states the
second-order formula so that we can refer to it in the sequel. Its
proof is standard and is omitted for the sake of brevity (see
Figueroa-L\'opez, Luo and Ouyang \cite
{FLO11} for the details).}
%
\begin{lmma}\label{RemLoc}
{For a fix $\varepsilon\in(0,1)$, let $K_{\varepsilon,m}$ denote a
finite constant whose value only depends on $\int\zeta^{2} \bar
{h}_{\varepsilon}(\zeta) \,\mathrm{d}\zeta$, $\|f^{(k)}\|_{\infty}$, $\|
b^{(k)}\|
_{\infty}$, and $\|v^{(k)}\|_{\infty}$ with $k=0,\ldots,m$. Then,} under
the conditions \textup{(C1)--(C3)} of Section~\ref{SectIntro}, the
following assertions hold true:
\begin{enumerate}
\item For any function $f$ in $C^{2}_{b}$, {$\sup_{y}L_{\varepsilon
}f(y)\leq K_{\varepsilon,2}$, and the iterated Dynkin's formula (\ref
{DynkinF}) is satisfied with $n=1$.}

\item{If, additionally, $f\in C^{4}_{b}$, then $\sup_{y}L^{2}_{\varepsilon}f(y)\leq K_{\varepsilon,4}$ and, furthermore,
the iterated Dynkin's formula (\ref{DynkinF}) is satisfied with $n=2$.}
\end{enumerate}
\end{lmma}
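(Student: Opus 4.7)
My plan is to dispatch part (1) by direct estimation plus It\^o's formula, then bootstrap to part (2) by applying part (1) to $L_{\varepsilon} f$ itself. For the uniform bound in part (1), I would decompose $L_{\varepsilon} f = \calD_{\varepsilon} f + \calI_{\varepsilon} f$. The diffusion--drift piece is controlled by $\|v\|_{\infty}\|f''\|_{\infty} + \|b_{\varepsilon}\|_{\infty}\|f'\|_{\infty}$; here $\|b_{\varepsilon}\|_{\infty}$ is finite thanks to {\bf (C3)}, together with the observation that the correction $b_{\varepsilon}-b$ is an integral over the compact region $\varepsilon/2 \leq |\zeta| \leq 1$ on which $\gamma$ and $h_{\varepsilon}$ are bounded (by {\bf (C1)} and {\bf (C2-a)}). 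For the integral term, I would invoke representation (\ref{UsRepIf1}) to bound $|\calI_{\varepsilon}f(y)|$ by $\tfrac{1}{2}\|f''\|_{\infty}\int \gamma(y,\zeta)^{2}\bar{h}_{\varepsilon}(\zeta)d\zeta$; since $\bar h_{\varepsilon}$ is supported in $|\zeta|\leq\varepsilon$ and (\ref{Cnd2}) gives $|\gamma(y,\zeta)|\leq C_{\varepsilon}|\zeta|$ there, this is at most $C_{\varepsilon}^{2}\int \zeta^{2}\bar h_{\varepsilon}(\zeta)d\zeta<\infty$, so $\sup_{y}|L_{\varepsilon}f(y)|\leq K_{\varepsilon,2}$.

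For the $n=1$ Dynkin identity, I would apply It\^o's formula to $f(X_{s}(\varepsilon,\emptyset,x))$. The $dW$ and compensated-jump martingale parts have zero expectation because $\sigma f'$ is bounded and, by the same $\zeta^{2}$ estimate, the jump integrand $f(y+\gamma(y,\zeta))-f(y)$ lies in $L^{2}(ds\otimes \bar h_{\varepsilon}(\zeta)d\zeta)$ uniformly in $y$ on $[0,t]$. Taking expectations yields $\bbe f(X_{t}) = f(x) + \int_{0}^{t}\bbe L_{\varepsilon}f(X_{s})ds$, which matches (\ref{DynkinF}) with $n=1$ after the substitution $s = \alpha t$.

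The core of part (2) is to show that $L_{\varepsilon}f\in C^{2}_{b}$ whenever $f\in C^{4}_{b}$, so that part (1) can be applied once more to $L_{\varepsilon}f$. The diffusion part $\calD_{\varepsilon}f = v f'' + b_{\varepsilon}f'$ is manifestly $C^{2}_{b}$ because $v,b_{\varepsilon}\in C^{\infty}_{b}$. For $\calI_{\varepsilon}f$, I would differentiate twice under the integral using (\ref{UsRepIf1}); each $y$-derivative produces chain-rule factors involving partials of $\gamma(y,\zeta)$ up to order $2$ and derivatives of $f$ up to order $4$, all multiplied by the $\gamma(y,\zeta)^{2}\bar h_{\varepsilon}(\zeta)$ weight, which remains integrable by the same $\zeta^{2}$-estimate. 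Once this is established, applying the $n=1$ identity to $L_{\varepsilon}f$ and substituting into the corresponding identity for $f$, then using the Fubini computation
\begin{equation*}
\int_{0}^{t}\!\!\int_{0}^{s} g(u)\,du\,ds \,=\, \int_{0}^{t}(t-u)g(u)\,du \,=\, t^{2}\!\int_{0}^{1}(1-\alpha)g(\alpha t)\,d\alpha,
\end{equation*}
produces (\ref{DynkinF}) with $n=2$. The main obstacle I anticipate is the bookkeeping involved in differentiating $\calI_{\varepsilon}f$ twice under the integral: one has to verify that the chain-rule products from differentiating $f''(y+\gamma(y,\zeta)\beta)$ and $\gamma(y,\zeta)^{2}$ in $y$ remain absolutely integrable uniformly in $y$, and that the resulting constants can be absorbed into a single $K_{\varepsilon,4}$ depending only on the quantities listed in the statement.
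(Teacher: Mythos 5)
Your proof is correct and follows essentially the route the paper itself indicates just before the lemma (It\^o's formula for $n=1$, then induction by applying the generator once more): you establish the uniform bound on $L_{\varepsilon}f$ via the representation (\ref{UsRepIf1}) together with (\ref{Cnd2}) and the fact that $\bar h_{\varepsilon}$ is supported near the origin, show the martingale parts are true martingales, and bootstrap to $n=2$ by differentiating $\calI_{\varepsilon}f$ under the integral and using the Fubini substitution. The paper omits the detailed proof as standard, so there is nothing substantive to contrast with.
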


\section{Second-order expansion for the tail distributions}\label
{DistrTailSect}
{We are ready to state our first main result; namely, we}
{characterize} the small-time behavior of the tail distribution of $\{
X_{t}(x)\}_{t\geq{}0}$:
%
%
\begin{equation}
\label{TDJD} \bar{F}_{t}(x,y):=\bbp\bigl(X_{t}(x)\geq{}x+y
\bigr)\qquad (y>0).
\end{equation}
{As in L\'eandre \cite{Leandre},} the key idea is to {take advantage
of} the
decomposition (\ref{FDLP}), by conditioning on the number of ``large''
jumps occurring before time $t$.
Concretely, {recalling that} $\{N_{t}^{\varepsilon}\}_{t\geq{}0}$ and
$\lambda_{\varepsilon}:=\int\phi_{\varepsilon}(\zeta) h(\zeta
)\,\mathrm{d}\zeta$
{represent} the jump counting process and the jump intensity of the
{large-jump component process $\{Z_{t}(\varepsilon)\}_{t\geq{}0}$ of
$Z$}, we have
%
%
\begin{equation}
\label{FndDcmp} \bbp\bigl(X_{t}(x)\geq{}x+y\bigr)=\mathrm{e}^{-\lambda_{\varepsilon}t}
\sum_{n=0}^{\infty
}\bbp\bigl(
X_{t}(x)\geq{}x+y\vert N_{t}^{\varepsilon}=n\bigr)
\frac{(\lambda_{\varepsilon}t)^{n}}{n!}.
\end{equation}
The first term in (\ref{FndDcmp}) (when $n=0$) can be written as
\[
\bbp\bigl(X_{t}(x)\geq{}x+y\vert N_{t}^{\varepsilon}=0
\bigr)=\bbp \bigl(X_{t}(\varepsilon,\varnothing,x)\geq{}x+y\bigr).
\]
In light of (\ref{FndBnd1}), this term can be made $\mathrm{O}(t^{N})$ for an
arbitrarily large $N\geq{}1$, by taking $\varepsilon$ small enough.
In order to deal with the other terms in (\ref{FndDcmp}), we use the
iterated Dynkin's formula introduced in Section~\ref{DynkSect}. The
following is the main result of this section (see Appendix \ref
{SecMnThPr} for the proof).\vadjust{\goodbreak}
Below, $h_{\varepsilon}$ and $\bar{h}_{\varepsilon}$ denote the L\'evy
densities defined in (\ref{EqDTrcLDsty}), while $g(x;y)$ denotes the
so-called L\'evy density of the process {$\{X_{t}(x)\}_{t\geq{}0}$}
defined by
%
%
\begin{equation}\label{DfnLvyDty0}
    g(x;y):=
    \cases{
    -\displaystyle\frac{\partial}{\partial y} \int_{\{\zeta:\gamma(x,\zeta)\geq y\}}
h(\zeta)\,\mathrm{d}\zeta,&\quad$y>0$,
\cr
\displaystyle\frac{\partial}{\partial y} \int_{\{\zeta:\gamma(x,\zeta)\leq y\}} h(\zeta)
\,\mathrm{d}\zeta,&\quad$y<0$.}
\end{equation}
for $y\neq 0$. In light of Lemma \ref{LmED}, $g$ admits the representation:
\[
g(x;y)= h\bigl({\gamma}^{-1}(x,y)\bigr)\bigl\llvert (
\partial_{\zeta} \gamma) \bigl(x,\gamma^{-1}(x,y) \bigr)\bigr
\rrvert ^{-1},
\]
where $\partial_{\zeta} \gamma$ is the partial derivative of the
function $\gamma(x,\zeta)$ with respect to its second variable.

\begin{thrm}\label{ThTail}
Let $x\in\bbr$ and $y>0$. Then, under {the conditions \textup{(C1)--(C4)} of
Section~\ref{SectIntro}}, we have
%
%
\begin{equation}
\bar{F}_{t}(x,y):=\bbp\bigl(X_{t}(x)\geq{}x+y
\bigr)= t A_{1}(x;y)+ \frac{t^{2}}{2}A_{2}(x;y)+\mathrm{O}
\bigl(t^{3}\bigr)
\end{equation}
as $t\to{}0$, where $A_{1}(x;y)$ and $A_{2}(x;y)$ admit the following
representations (for $\varepsilon>0$ {small enough}):
\begin{eqnarray*}
A_{1}(x;y)&:=&\int_{y}^{\infty}g(x;\zeta)\,\mathrm{d}
\zeta=\int_{\{\gamma
(x,\zeta
)\geq{}y\}}h(\zeta)\,\mathrm{d}\zeta,
\\
A_{2}(x;y)&:=& \calD(x;y)+\calJ_{1}(x;y)+
\calJ_{2}(x;y),
\end{eqnarray*}
with
\begin{eqnarray}\label{MPTN}
\nonumber
\calD(x;y)&=&{b_{\varepsilon}(x)} \biggl(\frac{\partial}{\partial
x}\int
_{y}^{\infty}g(x;\zeta)\,\mathrm{d}\zeta+g(x;y)
\biggr)+{b_{\varepsilon
}(x+y)}g(x;y)
\\
&&{} +\frac{\sigma^{2}(x)}{2} \biggl(\frac{\partial^{2}}{\partial
x^{2}}\int_{y}^{\infty}g(x;
\zeta)\,\mathrm{d}\zeta+2\frac{\partial}{\partial
x}g(x;y)-\frac{\partial}{\partial y}g(x;y) \biggr)
\nonumber
\\
&&{} -\frac{\sigma(x+y)}{2} \biggl(\sigma(x+y)\frac{\partial
}{\partial
y}g(x;y)+2
\sigma'(x+y)g(x;y) \biggr),
\nonumber
\\[-8pt]
\\[-8pt]
\nonumber
\calJ_{1}(x;y)&=& \int \biggl(\int_{y-\gamma(x,\bar{\zeta
})}^{\infty
}g
\bigl({x+\gamma(x,\bar\zeta)};\zeta\bigr)\,\mathrm{d}\zeta +\int_{\bar{\gamma}(x+y,\bar{\zeta})-x}^{\infty}g(x;
\zeta)\,\mathrm{d}\zeta -2\int_{y}^{\infty}g(x;\zeta)\,\mathrm{d}\zeta
\nonumber
\\
&&\hspace*{16pt}{} -{\gamma(x,\bar\zeta)\partial_{x}\int_{y}^{\infty}g(x;
\zeta )\,\mathrm{d}\zeta -\gamma(x,\bar\zeta)g(x;y)-\gamma(x+y,\bar\zeta)g(x;y)} \biggr)
\bar{h}_{\varepsilon}(\bar\zeta) \,\mathrm{d}\bar\zeta,
\nonumber
\\
\calJ_{2}(x;y)&=&{\int\int_{y-\gamma(x,\bar\zeta)}^{\infty
}g
\bigl(x+\gamma (x,\bar\zeta);\zeta\bigr)\,\mathrm{d}\zeta h_{\varepsilon}(\bar\zeta)\,\mathrm{d}\bar
\zeta }-2\int_{y}^{\infty}g(x;\zeta)\,\mathrm{d}\zeta\int
h_{\varepsilon}(\zeta)\,\mathrm{d}\zeta.
\nonumber
\end{eqnarray}
\end{thrm}

%
\begin{remk}
Note that if ${\rm supp}(\nu)\cap\{\zeta\dvt  \gamma(x,\zeta)\geq{}y\}
=\varnothing$ (so that it is not possible to reach the level $y$ from $x$
with only one jump),\vadjust{\goodbreak} then $A_{1}(x;y)=0$ and $\bbp(X_{t}(x)\geq
{}x+y)=\mathrm{O}(t^{2})$ as $t\to{}0$. If, in addition, it is possible to reach
the level $y$ from $x$ with two jumps, then {$\calJ_{2}(x;y)\neq{}0$},
implying that $\bbp(X_{t}(x)\geq{}x+y)$ decreases at the order of
$t^{2}$. These observations are consistent with the results in Ishikawa
\cite
{Ishikawa} and Picard \cite{Picardb}.
\end{remk}

\begin{remk}\label{RemkSnsbty}
In the case that the coefficient $\gamma(x,\zeta)$ does not depend on
$x$, we get the following expansion for $\bbp(X_{t}(x)\geq{}x+y)$:
\begin{eqnarray*}
\bbp\bigl(X_{t}(x)\geq{}x+y\bigr)&=&t\int_{y}^{\infty}
g(\zeta)\,\mathrm{d}\zeta+\frac
{{b_{\varepsilon}(x)+b_{\varepsilon}(x+y)}}{2} g(y) t^{2}
\\
&&{} - \biggl(\frac{\sigma^{2}(x)+\sigma^{2}(x+y)}{2}g'(y)+2\sigma (x+y)
\sigma'(x+y)g(y) \biggr) \frac{t^{2}}{2}
\nonumber
\\
&&{} +{\int \biggl(\int_{y-\gamma(\bar\zeta)}^{\infty}g(\zeta)\,\mathrm{d}\zeta -
\int_{y}^{\infty}g(\zeta)\,\mathrm{d}\zeta -{2g(y)\gamma(\bar
\zeta)} \biggr)\bar{h}_{\varepsilon}({\bar{\zeta}}) \,\mathrm{d}{\bar{\zeta}}
t^{2}}
\\
&&{} + { \biggl(\int\int_{y-\gamma({\bar\zeta})}^{\infty}g(\zeta )\,\mathrm{d}\zeta
h_{\varepsilon}({\bar\zeta})\,\mathrm{d}{\bar\zeta}-2\int_{y}^{\infty
}g(
\zeta)\,\mathrm{d}\zeta\int h_{\varepsilon}(\zeta)\,\mathrm{d}\zeta \biggr)\frac
{t^{2}}{2}+\mathrm{O}
\bigl(t^{3}\bigr)}.
\end{eqnarray*}
The leading term in the above expression is {determined} by the jump
component of the process and it has a natural interpretation: if within
a very short time interval there is a ``large'' positive move (say, a
move by more than $y$), this move must be due to a ``large'' jump. It is
until the second term, when the diffusion and drift terms of the
process {$X(x)$} appear. {If, for instance, $b$ and $\sigma$ are constants,
the effect of a positive {``drift''} ${b_{\varepsilon}}>0$ is to
increase the probability of a ``large'' positive move {of more than
$y$} by ${b_{\varepsilon}}g(y) t^{2}(1+\mathrm{o}(1))$. Similarly, since
typically $g'(y)<0$ when $y>0$, the effect of a non-zero spot volatility
$\sigma$ is to increase the probability of a ``large'' positive move by
$\frac{\sigma^{2}}{2}|g'(y)| t^{2}(1+\mathrm{o}(1))$}.
\end{remk}

\section{Expansion for the transition densities}\label{ExpDstSect}
Our goal here is to obtain a second-order small-time approximation for
the transition densities $\{p_{t}(\cdot;x)\}_{t\geq{}0}$ of $\{
X_{t}(x)\}_{t\geq{}0}$. As it was done in the previous section, the
idea is {to} work with the expansion (\ref{FndDcmp}) by first showing
that each term there is differentiable {with respect to $y$}, and then
determining their rates of convergence to $0$ as $t\to{}0$. One of the
main difficulties of this approach comes from controlling the term
corresponding to no ``large'' jumps. As in the case of purely diffusion
processes, Malliavin calculus is proved to be the key tool for this
task. This analysis is presented in the following subsection before our
main result is presented in Section~\ref{secExpDnsty}.

\subsection{Density estimates for SDE with bounded jumps}
In this part, we analyze the term corresponding to $N_{t}^{\varepsilon}=0$:
\[
\bbp\bigl(X_{t}(x)\geq{}x+y\vert N_{t}^{\varepsilon}=0
\bigr)=\bbp \bigl(X_{t}(\varepsilon,\varnothing,x)\geq{}x+y\bigr).\vadjust{\goodbreak}
\]
We will prove that, for any {fixed} positive integer $N$ and $\eta>0$,
there {exist an $\varepsilon_{0}>0$ and a constant $C<\infty$ (both
only depending on $N$ and $\eta$)} such that the density $p_{t}(\cdot
;{\varepsilon},\varnothing,x)$ of $X_{t}(\varepsilon,\varnothing,x)$ satisfies
%
%
\begin{equation}
\label{NETDSJ0} \sup_{|y-x|>{\eta}, {\varepsilon}<{\varepsilon}_{0}} p_{t}(y;{\varepsilon},
\varnothing,x)< C t^{N}
\end{equation}
for all $0<t\leq{}1$.

{{To} simplify notation, in this subsection, we write $X_t^x$ for
$X_t(\varepsilon, \varnothing, x)$. Recall that $X_t^x$ satisfies an
equation of the following general form}
%
%
\begin{eqnarray}
\label{SDE} X_t^x=x+\int_0^t{b_{\varepsilon}
\bigl(X_{s-}^x\bigr)}\,\mathrm{d}s+\int_0^t
\sigma \bigl(X_{s-}^x\bigr)\,\mathrm{d}W_s+\int
_0^t\int\gamma\bigl(X_{s-}^x,
\zeta\bigr){\bar {M}'}(\mathrm{d}s,\mathrm{d}\zeta),
\end{eqnarray}
{where, ${M'(\mathrm{d}s,\mathrm{d}\zeta)}$ is a Poisson random measure on $\bbr
_{+}\times\mr\setminus\{0\}$ with {mean measure} ${\mu'(\mathrm{d}s,\mathrm{d}\zeta
)}={\nu'(\mathrm{d}\zeta)\,\mathrm{d}s}={\bar{h}_{\varepsilon}(\zeta)}\,\mathrm{d}\zeta \,\mathrm{d}s$ and
{$\bar
{M'}=M'-\mu'$} is its compensated measure.
Since there are no ``big jumps'' for $X_t^x$, {$\bar{h}_{\varepsilon}$}
is supported in a ball $B(0,\varepsilon)$}.

Malliavin calculus is the main tool to analyze the existence and
smoothness of density for~$X^x_t$. {Throughout this subsection, we
follow closely the presentation of Bichteler, Gravereaux and Jacod
\cite{BBG}, Chapter IV} {(see also
Appendix A in Figueroa-L\'opez, Luo and Ouyang \cite{FLO11} for an
introduction to this theory)}. As
described therein, there are different ways to define a Malliavin
operator for {Wiener--Poisson spaces}. For our purposes, it suffices to
consider the Malliavin operator corresponding to $\rho=0$ {(see
Bichteler, Gravereaux and Jacod
\cite{BBG},
Section 9a--9c, for the details)}. The intuitive explanation of
$\rho=0$ is that when making perturbation of the sample path on the
{Wiener--Poisson} space, we only perturb the Brownian path.

{Let us start by noting that our assumption on the coefficients of
(\ref
{SDE})} ensures that ${x\to X^x_t}$ is a $C^2$-diffeomorphism {with} a
continuous density (see Bichteler, Gravereaux and Jacod \cite{BBG} for
more details). {Define}
%
%
\begin{equation}
\label{Gamma} U_t:= \Gamma\bigl({X_t^{x},X_t^{x}}
\bigr)= { \biggl\{\int_0^t\sigma ^2
\bigl(X^x_{s}\bigr){\mathbf{J}_s(x)}^{-2}\,\mathrm{d}s
\biggr\}{\mathbf{J}_t(x)}^{2}.}
\end{equation}
{In the above, we use the standard notation:}
%
%
\begin{equation}
\label{JcbDfn1} {{\mathbf{J}_t(x)}=\frac{\mathrm{d}X^x_t}{\mathrm{d}x}.}
\end{equation}

\begin{remk}\label{divX-Lpbound}
Under {the condition (C4) of Section~\ref{SectIntro}},
{{$\mathbf{J}_t(x)$} admits an inverse $Y_t:={\mathbf{J}_t(x)}^{-1}$,
almost surely}. Indeed, one can show that ({cf. Bichteler, Gravereaux
and Jacod~\cite{BBG}})
\begin{eqnarray*}
d{\mathbf{J}_t(x)} &=&1+{\partial_x} {b_{\varepsilon}
\bigl(X_{t-}^x\bigr)} {\mathbf {J}_{t-}}(x)\,\mathrm{d}t+{
\partial_x}\sigma\bigl(X^x_{t-}\bigr){\mathbf
{J}_{t-}}(x)\,\mathrm{d}{W_t}
\\
&&{}+{\partial_x}\gamma\bigl(X^x_{t-},\zeta
\bigr){\mathbf{J}_{t-}}(x){\bar {M}'}(\mathrm{d}t,\mathrm{d}\zeta),
\end{eqnarray*}
{while $Y_t={\mathbf{J}_t(x)}^{-1}$} satisfies {an equation of the form:}
\[
dY_t =1+Y_{t-}D_t\,\mathrm{d}t+Y_{t-}E_t\,d{W_t}+Y_{t-}F_t{
\bar{M}'}(\mathrm{d}t,\mathrm{d}\zeta).
\]
Here $D_t, E_t$ and $F_t$ are determined by ${b_{\varepsilon}(x)},
\sigma(x), \gamma(x,\zeta)$ and $X_t^x$. As a consequence, together
with our assumption on $b, \sigma$ and
$\gamma$, one {has}
\[
\me\sup_{0\leq t\leq1}{{\mathbf{J}_t(x)}^p}\quad  \mathrm{and} \quad\me \sup_{0\leq t\leq1}{\mathbf{J}_t(x)}^{-p}<
\infty
\]
for all $p>1$.
\end{remk}

The main result of this section is Theorem \ref{densityest-main}
{below}. For this purpose, we state {some preliminary known results.
Let us start with the} following integration by parts formula ({the
main ingredient for {existence} and smoothness of {the} density of
$X^x_t$}), which is a special case of Lemma~4--14 {in Bichteler,
Gravereaux and Jacod \cite{BBG}}
together with the discussion of Chapter IV {therein}.

\begin{prop}[{(Integration by parts)}]\label{IBP}
For any $f\in C^\infty_c(\mr)$, there exists a random variable
${G_t}\in L^p$ for all $p\in\mn$, such that
\[
\bbe {\partial_{x}} f\bigl(X^x_t\bigr)=\me
{G_t} U^{-2}_t f\bigl(X_t^x
\bigr).
\]
\end{prop}

The following existence and regularity result for the density of a
finite measure is well known (see, e.g., Theorem 5.3 in Shigekawa \cite
{Shige}).
%
\begin{prop}\label{reg-density}
Let $m$ be a finite measure supported in an open set $\mathrm{O}\subset{\mr}$.
Take any $p>{1}$. Suppose that there exists ${g}\in\mathbb{L}^p(m)$
such that
\[
\int_{\mr}{\partial_xf} \,\mathrm{d}m=\int
_{\mr}f {g} \,\mathrm{d}m,\qquad f\in C^\infty_c(\mathrm{O}).
\]
Then $m$ has a bounded density function ${q}\in C_b(\mathrm{O})$ satisfying
\[
\|{q}\|_\infty\leq C{\|g\|_{\mathbb{L}^p(m)}m(\mathrm{O})^{1-1/p}}.
\]
Here the constant $C$ {depends on $p$}.
\end{prop}

{The following lemma is the main ingredient in proving Theorem \ref
{densityest-main}.}
%
\begin{lmma}\label{Malliavin-singularity} Recall $U_t=\Gamma(X_t,X_t)$.
Under {the condition \textup{(C4)} of Section~\ref{SectIntro}}, we have
\[
\me U_t^{-p}\leq Ct^{-p},
\]
{for all $p>1$}.
\end{lmma}
\begin{pf}
The proof is a direct consequence of assumption (C4) and Remark
\ref{divX-Lpbound}. More precisely,
{
\begin{eqnarray*}
\me U_t^{-p}&=&\me\frac{{\mathbf{J}_t(x)}^{-2p}}{ (\int_0^t
{\mathbf
{J}_s(x)}^{-2}\sigma(X_s^x)^2\,\mathrm{d}s )^{p}}\leq\frac{1}{t^p}
\me \frac
{{\mathbf{J}_t(x)}^{-2p}}{\delta^{2p} \inf_{0\leq s\leq t} {\mathbf
{J}_s(x)}^{-2p}}
\\
&=&\frac{1}{t^p}\delta^{-2p}\me \Bigl({\mathbf{J}_t(x)}^{-2p}{
\sup_{0\leq s\leq1} {\mathbf{J}_s(x)}^{2p}}
\Bigr).
\end{eqnarray*}
}
The proof is completed.
\end{pf}

\begin{remk}
The above lemma is where condition (C4)(ii) is used. It could be
relaxed to include degenerate diffusion coefficients. But in the
degenerate case, we need to take a non-trivial $\rho$ (as opposed to
$\rho=0$ in the present setting) in the construction of Malliavin
operator on the Wiener--Poisson space. In this case, the process $U_t$ becomes
\begin{eqnarray*}
U_t&:=& {\mathbf{J}_t(x)}^{2}\int
_0^t\sigma ^2\bigl(X^x_{s}
\bigr){\mathbf{J}_s(x)^{-2}}\,\mathrm{d}s
\\
&&{} +{\mathbf{J}_t(x)}^{2}\int_\mr
\int_0^t\mathbf {J}_{s-}(x)^{-2}
\bigl(1+\partial_x\gamma\bigl(X^x_{s-},\zeta
\bigr) \bigr)^2\bigl(\partial_\zeta\gamma
\bigl(X_{s-}^x,\zeta\bigr)\bigr)^2\rho(
\zeta){M'(\mathrm{d}s,\mathrm{d}\zeta)}.
\end{eqnarray*}
Under suitable conditions on $\rho$, {the above is well-defined and it
is also possible to obtain an estimate of the form:}
\[
\me U_t^{-p}\leq Ct^{-N(p)}.
\]
\end{remk}

Finally, we can state and prove our main result of this section.
%
\begin{thrm}\label{densityest-main} Assume {the condition \textup{(C3)}
of Section~\ref{SectIntro}} is satisfied. Let $\{X_t^{x}\}_{t\geq{}0}$
be the solution to equation (\ref{SDE}) {and} denote the density of
$X^x_t$ by {$p_{t}(y;x)$}. Fix {$\eta>0$ and $N>0$. Then,} there exists
$r(\eta,N)>0$ such that, if {$\nu'$} is supported in $B(0,r)$ with
$r\leq r(\eta,N)$, we have, for all $0\leq t\leq1$,
\[
\sup_{{|x-y|\geq\eta}} {p_{t}(y;x)}\leq{C(\eta,N)}
t^{N}.
\]
\end{thrm}
\begin{pf}
For a fix $t\geq0$, define a finite measure $m_t^\eta$ on $\mr$ by
\[
m_t^\eta(A)=\mp \bigl( \bigl\{X_t^x
\in A\cap\bar{B}^c(x,\eta ) \bigr\} \bigr),\qquad A\subset\mr,
\]
where $\bar{B}^c(x,r)$ denotes the complement of {the} closure of
$B(x,r)$. Thus, to prove our result it suffices to prove that $m_t^\eta
$ admits a density that has the desired bound. {To this end}, for any
smooth function $f$ compactly supported in $\bar{B}^c(x,\eta)$, we have:
\begin{eqnarray*}
\int_\mr({\partial_x}f) (y)m_t^\eta(\mathrm{d}y)
&=&\me{\partial_x}f\bigl(X^x_t\bigr)=
\me{G_t}U^{-2}_tf\bigl(X^x_t
\bigr)
\\
&=&\int_\mr\me \bigl[{G_t}U_t^{-2}
| X^x_t=y \bigr] f(y)m_t^\eta(\mathrm{d}y),
\end{eqnarray*}\eject\noindent
where the second equality follows from integration by parts. {Now by an
application of Proposition \ref{martingale-property} to $X_t^x$, one
has, for any $p>0$,
\[
m_t^\eta(\mr)\leq\mp \Bigl(\sup_{0\leq s\leq t}\bigl|X_s^x-x\bigr|
\geq\eta \Bigr)\leq C(\eta,p)t^p.
\]
The rest of the proof follows from {Proposition \ref{reg-density} and
Lemma \ref{Malliavin-singularity}}.}
\end{pf}

\subsection{Expansion for the transition density}\label{secExpDnsty}
We are ready to state the main result of this section, {namely,} the
second-order expansion for the transition densities {$\{p_{t}(\cdot
;x)\}
_{t\geq{}0}$} of the process $\{X_{t}(x)\}_{t\geq{}0}$ in terms of the
L\'evy density $g(x;y)$ defined in (\ref{DfnLvyDty0}).
The proof is presented in Appendix \ref{SecMnThPr2}.
%
\begin{thrm}\label{ThDsty}
Let $x\in\bbr$ and $y>0$. Then, under the {hypothesis} of Theorem
\ref
{ThTail}, we have
%
%
\begin{equation}
\label{SOExp} p_{t}(x+y;x):=-\frac{\partial\bbp(X_{t}(x)\geq{}x+y)}{\partial y}= t
a_{1}(x;y)+ \frac{t^{2}}{2}a_{2}(x;y)+\mathrm{O}
\bigl(t^{3}\bigr)
\end{equation}
as $t\to{}0$, where $a_{1}(x;y)$ and $a_{2}(x;y)$ admit the following
representations (for $\varepsilon>0$ {small enough}):
\[
a_{1}(x;y):=g(x;y),\qquad a_{2}(x;y):=\eth(x;y)+
\Im_{1}(x;y)+\Im_{2}(x;y),
\]
with
%
%
\begin{eqnarray}\label{MPTNDsty}
\eth(x;y)&=&{-\frac{\partial}{\partial y} \calD(x;y),}
\nonumber\\
\Im_{1}(x;y)&=& \int \bigl(g \bigl({x+\gamma(x,\bar\zeta)};y-\gamma
(x,\bar{\zeta}) \bigr) +g \bigl(x;\bar{\gamma}(x+y,\bar\zeta)-x \bigr)
\partial_{u} \bar {\gamma }(x+y,\bar\zeta) \nonumber\\
&&\quad{} -2g(x;y)
- \gamma(x,\bar\zeta)\partial_{x}g(x;y) +\gamma(x,\bar\zeta)
\partial_{y}g(x;y)\\
&&\quad{}+\partial_{y}\bigl(\gamma (x+y,\bar
\zeta)g(x;y)\bigr)\bigr)\bar{h}_{\varepsilon}(\bar\zeta) \,\mathrm{d}\bar\zeta,
\nonumber\\
\Im_{2}(x;y)&=&{\int g \bigl(x+\gamma(x,\zeta);y-\gamma(x,\zeta )
\bigr)h_{\varepsilon}(\zeta)\,\mathrm{d}\zeta-2g(x;y)\int h_{\varepsilon}(\zeta )\,\mathrm{d}\zeta,}
\nonumber
\end{eqnarray}
{and $\calD(x,y)$ be given as in (\ref{MPTN}).}
\end{thrm}

\section{The first-order term of the option price expansion}\label
{FirstOptPriceSect}
In this section, we use our previous results to derive {the leading
term of the small-time expansion for option prices of {out-of-the-money
(OTM)}} European call options. This can be achieved by either the
asymptotics of the tail distributions or the transition density. {Given
that the former requires less stringent conditions on the coefficients
of the SDE}, we choose {the former approach.}

It is well known by practitioners that the market implied volatility
skewness is more pronounced as the expiration time approaches. Such a
phenomenon indicates that a jump risk should be included into classical
purely-continuous financial models (e.g., local volatility models and
stochastic volatility models) to reproduce more accurately the implied
volatility skews observed in short-term option prices. Moreover,
further studies have shown that accurate modeling of the option market
and asset prices requires a mixture of a continuous diffusive component
and a jump component (see A\"{\i}t-Sahalia and Jacod \cite
{AitJacod06}, A\"{\i}t-Sahalia and Jacod \cite{AitJacod10},
Barndorff-Nielsen and Shephard \cite{BNSh06}, Podolskij \cite
{Podolskij06}, Carr and Wu \cite{CW03}, and Medvedev and Scailllet
\cite{MedSca07}).
The study of small-time asymptotics of option prices and implied
volatilities has grown significantly during the last decade, as it
provides a convenient tool for testing various pricing models and
calibrating parameters in each model (see, e.g., Gatheral et~al. \cite
{Gatheral2009},
Feng, Forde and
Fouque~\cite{FengFordeFouque}, Forde and Jacquier \cite
{FordeJacquier}, Berestyki, Busca and
Florent \cite{Busca22004}, Figueroa-L\'{o}pez and Forde \cite
{FLF11}, Roper~\cite{Rop10}, Tankov \cite{Tnkv10}, Gao and Lee \cite
{GL11}, Muhle-Karbe and Nutz \cite{Muhle}, Figueroa-L\'opez, Gong and
Houdr\'e~\cite
{FigGongHou2011}).
In spite of the ample literature on the asymptotic behavior of the
transition densities and option prices for either purely-continuous or
purely-jump models, results on local jump-diffusion models are scarce.
Our result in this section is thus a first attempt in this direction.

{Throughout this section, let $\{S_t\}_{t\geq0}$} be the stock price
process and let $X_t=\log S_t$ for each $t\geq0$. We assume that $\bbp
$ is the option pricing measure and that under this measure the process
{$\{X_t\}_{t\geq0}$} is of the form in {(\ref{SE1General})}. {As usual,
without loss of generality we assume that the risk-free interest rate
$r$ is $0$.} In particular, in order for $S_{t}=\exp X_{t}$ to be a
$\bbq$-(local) martingale, we fix
\[
b(x):=-\frac{1}{2}\sigma^{2}(x)-\int \bigl(\mathrm{e}^{\gamma
(x,z)}-1-{
\gamma (x,\zeta)\mathbf{1}_{\{|\zeta|\leq{}1\}}}
\bigr)h(z)\,\mathrm{d}z.
\]
We assume that $\sigma$ and $\gamma$ are such that the conditions
(C1)--(C4) of Section~\ref{SectIntro} are satisfied. We also
impose an extra condition for
$h(z)$ and $\gamma(x,z)$ in order to
derive option price expansion, as we are working
with the exponential of a jump-diffusion now:
\begin{longlist}
\item[(C5)] {$h(z)$ and $\gamma(x,z)$ are such that $ \sup_{x\in\mr
}\int_{|z|\geq{}1} \mathrm{e}^{3|\gamma(x,z)|}h(z)\,\mathrm{d}z<\infty.$}
\end{longlist}
Note that this condition ensures immediately that $b(x)$ above is well defined.

By the Markov property of the system, {it will suffice to compute a
small-time expansion for}
\[
v_t=\me(S_t-K)_+=\me \bigl(\mathrm{e}^{X_t}-K
\bigr)_{+}.
\]
In particular, using the well-known formula
\[
\me U\mathbf{1}_{\{U>K\}}
=K\mp\{U> K\}+\int_K^\infty
\mp\{U>s\}\,\mathrm{d}s,
\]
we can write
\[
\me\bigl(\mathrm{e}^{X_t}-K\bigr)_+=\int_{K}^\infty
\mp\{S_t>s\}\,\mathrm{d}s=S_0\int_ {{K}/{S_0}}^\infty\mp\{X_t-x>\log s\}\,\mathrm{d}s,
\]
where $x=X_0=\log S_0$. Recall that
%
%
\begin{equation}
\label{FndDcmp-recall} \bbp(X_{t}-x\geq{}y)=\mathrm{e}^{-\lambda_{\varepsilon}t} \sum
_{n=0}^{\infty
}\bbp\bigl(X_{t}-x
\geq{}y\vert N_{t}^{\varepsilon}=n\bigr) \frac{(\lambda_{\varepsilon}t)^{n}}{n!},
\end{equation}
where $\lambda_{\varepsilon}:=\int\phi_{\varepsilon}(\zeta)
h(\zeta
)\,\mathrm{d}\zeta$ is the jump intensity of $\{N_{t}^{\varepsilon}\}_{t\geq{}0}$.
We proceed as in Section~\ref{DistrTailSect}. First, note that
%
%
\begin{eqnarray}
\label{KDcFExp} v_t=S_0\int_{{K}/{S_0}}^\infty
\mp\{X_t-x>\log s\} \,\mathrm{d}s=S_0\mathrm{e}^{-\lambda
_\varepsilon t}(I_1+I_2+I_3),
\end{eqnarray}
where
\begin{eqnarray*}
I_1&=&\int_{{K}/{S_0}}^\infty\mp\bigl
\{X_t-x\geq\log s|N_t^\varepsilon=0\bigr\} \,\mathrm{d}s=\int
_{{K}/{S_0}}^\infty\mp\bigl\{X_t(\varepsilon,
\varnothing ,x)-x\geq \log s\bigr\}\,\mathrm{d}s,
\\
I_2&=&\lambda_\varepsilon t\int_{{K}/{S_0}}^\infty
\mp\bigl\{ X_t-x\geq\log s|N_t^\varepsilon=1\bigr\}\,\mathrm{d}s,
\\
I_3&=&\lambda_\varepsilon^2t^2\sum
_{n=2}^\infty\frac{(\lambda
_\varepsilon t)^{n-2}}{n!}\int
_{{K}/{S_0}}^\infty\mp\bigl\{X_t-x\geq \log
s|N_t^\varepsilon=n\bigr\}\,\mathrm{d}s.
\end{eqnarray*}
It is clear that {$I_1/t\to0$ as $t\to{}0$} by Lemma \ref{NLBS}. We
show that the same is true for $I_3$, which is the content of the
following lemma. {Its proof is given in Appendix \ref{SecTcnLm}.}
%
\begin{lmma}\label{l1}
With the {above} notation, we have
\[
\sup_{n\in\mn, t\in[0,1]}\frac{1}{n!}\int_0^\infty
\mp \bigl(|X_t-x|\geq\log y|N_t^\varepsilon=n\bigr)\,\mathrm{d}y<
\infty.
\]
As a consequence, $I_3/t\rightarrow0$ as $t\rightarrow0$.
\end{lmma}
Note that the above lemma actually implies that $\me \mathrm{e}^{|X_t-x|}<\infty
$ for all $t\in[0,1)$. We are ready to state the main result of this section.
%
\begin{thrm}\label{MnResOptPri}
Let $v_t=\me(S_t-K)_+$ be the price of a European call option with
strike {$K>S_0$}. Under the conditions \textup{(C1)--(C5)}, we have
%
%
\begin{equation}
\label{LOTMOP} \lim_{t\rightarrow0}\frac{1}{t}v_t=\int
_{-\infty}^\infty \bigl(S_0\mathrm{e}^{\gamma(x,\zeta)}-K
\bigr)_+h(\zeta)\,\mathrm{d}\zeta.
\end{equation}
\end{thrm}
\begin{pf}
We use the notation introduced in (\ref{KDcFExp}).
Following a similar argument as in the proof of Lemma \ref{l1}, one can
show that
%
%
\begin{equation}
\label{I2} \int_{{K}/{S_0}}^\infty\sup_{t\in[0,1]}
\mp\bigl\{X_t-x\geq\log s|N_t^\varepsilon=1\bigr\}\,\mathrm{d}s<
\infty.
\end{equation}
Also, it is clear that $I_1/t$ converges to {$0$} when $t$ approaches
to {$0$} by Lemma \ref{NLBS}.
Using the latter fact, equation (\ref{I2}), Lemma \ref{l1}, equation (\ref{KDcFExp}), and
dominated convergence theorem, we have
\[
\lim_{t\rightarrow0} \frac{v_t}{t}= \lim_{t\rightarrow0}
\frac{S_0I_2}{t} 
=\lambda_\varepsilon
S_0\int_{{K}/{S_0}}^\infty\lim
_{t\rightarrow
0}\mp\bigl\{X_t-x\geq\log s|N_t^\varepsilon=1
\bigr\}\,\mathrm{d}s.
\]
Next, using Theorem \ref{ThTail}, it follows that
\[
\lim_{t\rightarrow0} \frac{v_t}{t}=S_0\int
_{{K}/{S_0}}^\infty A_1(x,\log s)\,\mathrm{d}s
=S_0\int_{{K}/{S_0}}^\infty\int
_{\{\gamma(x,\zeta)\geq\log
s\}
}h(\zeta)\,\mathrm{d}\zeta \,\mathrm{d}s. 
\]
Finally, (\ref{LOTMOP}) follows from applying Fubini's theorem to the
right-hand side of the above equality.
\end{pf}

\begin{remk}
As a special case of our result, let $\gamma(x,\zeta)=\zeta$. The model
reduces to an exponential L\'{e}vy model. The above first-order
asymptotics becomes to
\[
\lim_{t\rightarrow0}\frac{1}{t}v_t=\int
_{-\infty}^\infty \bigl(S_0\mathrm{e}^\zeta-K
\bigr)_+h(\zeta)\,\mathrm{d}\zeta.
\]
This recovers the well-known first-order asymptotic behavior for
exponential L\'evy model (see, e.g., Roper \cite{Rop10} {and} Tankov
\cite{Tnkv10}).\vadjust{\goodbreak}
\end{remk}

\begin{appendix}\label{app}

\section{Proof of the tail distribution expansion} \label{SecMnThPr}
The proof of Theorem \ref{ThTail} is decomposed into {three steps
described in the following three} subsections. {For future use in
obtaining the expansion for the transition densities}, we {will write
explicitly} the {remainder} terms when applying Dynkin's formula {(\ref
{DynkinF}) or in any other type of approximation.}
\subsection{Key lemma to control the tail of the process with one
large jump}
The following result will allow us to obtain the second-order expansion
for the process with one large jump. {Below, we recall that
$J:=J^{\varepsilon}$ represents the jump size of the big-jump component~$Z(\varepsilon)$; that is, a random variable with density ${\breve
{h}}_{\varepsilon}(\zeta):=h_{\varepsilon}(\zeta)/\lambda
_{\varepsilon
}:=\phi_{\varepsilon}(\zeta) h(\zeta)/\lambda_{\varepsilon}$.}
%
\begin{lmma}\label{Lmm2MnLm}
Under the setting and conditions \textup{(C1)--(C4)} of Section~\ref{SectIntro},
%
%
\begin{equation}
\label{2ndExpOLJ} \mp \bigl(X_t\bigl(\varepsilon,\varnothing,z+
\gamma(z,J)\bigr)\geq\vartheta \bigr)=H_{0}(z;\vartheta)
+tH_{1}(z;\vartheta)+t^{2}\breve{\calR}_{t}(z;
\vartheta)
\end{equation}
for any $z,\vartheta\in\bbr$, where
%
%
\begin{eqnarray}
\label{EqDfnH0H1} H_{0}(z;\vartheta)&:=&\mp \bigl(\gamma(z,J)+z\geq
\vartheta \bigr),\qquad H_{1}(z;\vartheta):=D(z;\vartheta)+I(z;\vartheta),
\nonumber\\
{D}(z;\vartheta)&:=&\widetilde\Gamma(\vartheta;z){b_{\varepsilon
}(\vartheta)}-
\partial_{\vartheta}\widetilde\Gamma(\vartheta ;z)v(\vartheta)-\widetilde
\Gamma(\vartheta;z)v'(\vartheta),
\\
{I}(z;\vartheta)&:=&{\int \bigl[\mp \bigl(z+\gamma(z,J)\geq{}\bar {\gamma }(
\vartheta,\zeta) \bigr)-\mp \bigl(z+\gamma(z,J)\geq\vartheta \bigr)- \widetilde
\Gamma(\vartheta;z)\gamma(\vartheta,\zeta) \bigr]\bar {h}_\varepsilon(\zeta)\,\mathrm{d}
\zeta},
\nonumber
\end{eqnarray}
and, for $\varepsilon>0$ small enough,
\[
\limsup_{t\to{}0} \sup_{z\in\bbr}\bigl\llvert
\breve{\calR }^{1}_{t} (z;\vartheta )\bigr\rrvert <\infty,\qquad
\sup_{z,\vartheta}\bigl|H_{1}(z;\vartheta)\bigr|<\infty.
\]
\end{lmma}
The idea to obtain (\ref{2ndExpOLJ}) consists of approximating the
function $\mathbf{1}_{\{X_t(\varepsilon,\varnothing,z+\gamma(z,J))\geq
\vartheta\}}$ by a smooth sequence of functions $f_{\delta
}(X_t(\varepsilon,\varnothing,z+\gamma(z,J)))$, $\delta>0$.
Concretely, we let
\[
f_{\delta}(w):={k_{\vartheta}*\varphi_{\delta}(w)= \int
_{-\infty
}^{w-\vartheta}\varphi_{\delta}(u)\,\mathrm{d}u},
\]
where {$*$ denotes the convolution operation, $k_{\vartheta}(w):=\mathbf{
1}_{w\geq{}\vartheta}$, and} $\varphi_{\delta}(w):=\delta^{-1}
\varphi
(\delta^{-1}w)$ for a density function $\varphi\in C^{\infty}$ with
{${\rm supp}(\varphi)=[-1,1]$}. In particular, as $\delta\to{}0$,
%
%
\begin{equation}
\label{BAIdnt} {f_{\delta}(w)\to k_{\vartheta}(w)={
\mathbf1}_{\{w\geq{}\vartheta\}
}}\quad \mbox{and}\quad \int g(w) f'_{\delta}(w)
\,\mathrm{d}w=\int g(w)\varphi _{\delta}(w-\vartheta) \,\mathrm{d}w\to g(\vartheta),
\end{equation}
whenever {$w\neq{}\vartheta$ and} $g$ is bounded and continuous at
$\vartheta$. It is then natural to apply Dynkin's formula to
$f_{\delta
}(X_t(\varepsilon,\varnothing,z+\gamma(z,J)))$ and show that each of the
resulting terms is convergent when $\delta\to{}0$. The following
result, whose proof is presented in Appendix \ref{SecTcnLm}, is needed
to formalize the last step.
%
\begin{lmma} \label{UEDSJ}
Let $\widetilde{\Gamma}(\cdot;z)$ be the density of the random variable
$z+\gamma(z,J)$ and let $p_{t} (\cdot;\varepsilon,\varnothing
,\zeta
)$ be the density of $X_{t}(\varepsilon,\varnothing,\zeta)$. Then,
under the conditions \textup{(C1)--(C4)} of Section~\ref{SectIntro}, there
exists an $\varepsilon>0$ small enough such that for any compact set
$K\subset\bbr$,
%
%
\begin{equation}
\label{WNADSJ2} \limsup_{t\to{}0}\sup_{z\in\bbr}\sup
_{\eta\in K}\biggl\llvert \frac
{\partial
^{k}}{\partial\eta^{k}}\int\widetilde{\Gamma}(
\zeta;z)p_{t} (\eta ;\varepsilon,\varnothing,\zeta )\,\mathrm{d}\zeta\biggr\rrvert
<\infty,\qquad k\geq0.
\end{equation}
{Furthermore, \textup{(\ref{WNADSJ2})} holds also true with $\partial_{\eta}
p_t(\eta;\varepsilon,\varnothing,\zeta)$ in place of $p_t(\eta
;\varepsilon
,\varnothing,\zeta)$ inside the integral.}
\end{lmma}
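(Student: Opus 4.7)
The plan is to recognize the quantity $F_t^z(\eta) := \int \widetilde{\Gamma}(\zeta;z)\, p_t(\eta;\varepsilon,\emptyset,\zeta)\,d\zeta$ probabilistically as the density at $\eta$ of the random variable $X_t(\varepsilon,\emptyset,Y)$, where $Y$ is drawn independently of the driving noise of $X$ from the $C^\infty_b$ density $\widetilde{\Gamma}(\cdot;z)$ provided by Lemma \ref{Lm:ED}. Even though $p_t(\eta;\varepsilon,\emptyset,\zeta)$ is not uniformly smooth in $\eta$ as $t\to 0$, the smoothness of the mixing density $\widetilde{\Gamma}$ should transfer onto $F_t^z$, allowing uniform control of all its $\eta$-derivatives in $t$, $z$, and $\eta\in K$.

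The key device is the flow of diffeomorphisms. Under (C2)--(C4), the map $\Psi_t:\zeta \mapsto X_t(\varepsilon,\emptyset,\zeta)$ is almost surely a $C^\infty$-diffeomorphism of $\bbr$ (cf.\ Remark \ref{JustNonDegCnd}); the change of variables $\zeta = \Psi_t^{-1}(u)$ inside the $\zeta$-integral yields the pathwise identity
\begin{equation*}
F_t^z(\eta) \;=\; \bbe\!\left[\widetilde{\Gamma}\bigl(\Psi_t^{-1}(\eta);z\bigr)\,\bigl|\partial_\eta \Psi_t^{-1}(\eta)\bigr|\right].
\end{equation*}
Since $\widetilde{\Gamma}\in C^\infty_b(\bbr\times\bbr)$, differentiating $k$ times in $\eta$ under the expectation (justified by dominated convergence and the moment bounds below) and applying the chain and Leibniz rules produces a finite sum whose typical term is a product of a bounded quantity $(\partial_\zeta^j \widetilde{\Gamma})(\Psi_t^{-1}(\eta);z)$ with derivatives of $\Psi_t^{-1}(\eta)$ of orders at most $k+1$.

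The second step is to control the derivatives of the inverse flow. Applying Fa\`a di Bruno's formula to the identity $\Psi_t\circ\Psi_t^{-1}=\mathrm{id}$, each $\partial_\eta^n \Psi_t^{-1}(\eta)$ is a polynomial in the direct flow derivatives $\partial_x^m X_t^x|_{x=\Psi_t^{-1}(\eta)}$ divided by a power of $\mathbf{J}_t(\Psi_t^{-1}(\eta))$. The higher Jacobians $\partial_x^m X_t^x$ satisfy explicit linear SDEs driven by bounded derivatives of $b_\varepsilon,\sigma,\gamma$ (in the spirit of Remark \ref{div X-Lp bound}); by (C2-a), (C3) and standard Burkholder--Davis--Gundy and Gronwall estimates, their $L^p$-norms are bounded uniformly in $x\in\bbr$ and $t\in[0,1]$, while (C4-i) together with the same argument gives uniform boundedness of $\bbe\,\mathbf{J}_t(x)^{-p}$. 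Combining these moment bounds with H\"older's inequality and the uniform boundedness of $\partial_\zeta^j \widetilde{\Gamma}$ (from Lemma \ref{Lm:ED}) yields the desired finite $\limsup$ for $\partial_\eta^k F_t^z(\eta)$, uniformly in $z\in\bbr$ and $\eta$ in a fixed compact $K$.

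For the second assertion with $\partial_\eta p_t$ in place of $p_t$, the same dominated-convergence argument permits interchanging $\partial_\eta$ with the $\zeta$-integral, giving $\int \widetilde{\Gamma}(\zeta;z)\,\partial_\eta p_t(\eta;\varepsilon,\emptyset,\zeta)\,d\zeta = \partial_\eta F_t^z(\eta)$; the bound on $\partial_\eta^{k+1} F_t^z$ then immediately yields the analogous control. The main obstacle will be the derivation of the uniform-in-$x$ and uniform-in-$t\leq 1$ $L^p$-bounds for $\mathbf{J}_t(x)^{-1}$ and the higher-order flow derivatives: once these (standard but nontrivial) SDE estimates are in place, relying crucially on (C4-i) and the $C^{\geq 1}_b$ smoothness of $\gamma$, the remainder reduces to routine chain-rule bookkeeping.
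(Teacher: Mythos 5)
Your identity $F_t^z(\eta)=\bbe\bigl[\widetilde{\Gamma}(\Psi_t^{-1}(\eta);z)\,\partial_\eta\Psi_t^{-1}(\eta)\bigr]$ is exactly the one the paper uses, and the overall program (pass derivatives under the expectation, then invoke $L^p$ bounds on derivatives of the inverse flow) is also the paper's. However, your route to those $L^p$ bounds has a genuine gap. You propose to apply Fa\`a di Bruno to $\Psi_t\circ\Psi_t^{-1}=\mathrm{id}$, so that each $\partial_\eta^n\Psi_t^{-1}(\eta)$ becomes a rational expression in the direct-flow derivatives $\partial_x^m X_t^x$ and $\mathbf{J}_t(x)$, and then to use moment bounds on these \emph{at fixed $x$} that are uniform in $x$. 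But in the Fa\`a di Bruno expression these direct-flow quantities are evaluated at the \emph{random} argument $x=\Psi_t^{-1}(\eta)$, which is not independent of the driving noise, and a bound of the form $\sup_{x}\bbe\,|\partial_x^m X_t^x|^p<\infty$ (or $\sup_x\bbe\,\mathbf{J}_t(x)^{-p}<\infty$) does not transfer to a bound on $\bbe\,|\partial_x^m X_t^x|^p\big|_{x=\Psi_t^{-1}(\eta)}$. To make this work you would need something stronger, such as $\bbe\,\sup_{x}|\partial_x^m X_t^x|^p<\infty$ (typically via Kolmogorov continuity or Sobolev embedding on the flow), which you neither state nor establish.

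The paper avoids this problem altogether: Lemma~\ref{AuxBndDffrm} uses a time-reversal argument (Theorem~VI.4.22 of \cite{Protter}, as in \cite{Ishikawa2001}) to identify $\Phi_t^{-1}(\eta)$ with $\breve Y_t(\eta)$, the solution at time $t$ of a \emph{forward} SDE in the variable $\eta$ driven by the reversed Brownian motion and reversed jump measure, with coefficients built from $b_\varepsilon,\sigma,\gamma$ and the inverse jump map $\bar\gamma$. The derivatives $\partial_\eta^i\breve Y_t(\eta)$ then solve ordinary linear SDEs with bounded coefficients and fixed initial condition $\eta$, so the needed uniform $L^p$ bounds follow from standard flow estimates (Lemma~10-29 in \cite{BBG}), with no composition at a random point. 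This is the step your proposal is missing; the rest of your argument (differentiating under the expectation by dominated convergence/uniform integrability, the Leibniz/chain-rule bookkeeping, and the reduction of the second assertion to the $k+1$ case of the first) matches the paper.
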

We are now in position to show (\ref{2ndExpOLJ}).

\begin{pf*}{Proof of Lemma \ref{Lmm2MnLm}}
Throughout, $\partial_{y}\gamma$ and $\partial_{\zeta}\gamma$ will
denote the partial derivatives of $\gamma(y,\zeta)$ with respect to its
first and second arguments, respectively. By dominated convergence
theorem, we have
%
%
\begin{equation}
\label{LTT1} \mp \bigl(X_t\bigl(\varepsilon,\varnothing,z+\gamma(z,J)
\bigr)\geq\vartheta \bigr)=\lim_{\delta\downarrow0}\me f_\delta
\bigl(X_t\bigl(\varepsilon,\varnothing ,z+\gamma(z,J)\bigr)\bigr).
\end{equation}
Note that
%
%
\begin{equation}
\label{NEF} \me f_\delta\bigl(X_t\bigl(\varepsilon,
\varnothing,z+\gamma(z,J)\bigr)\bigr)=\int \widetilde \Gamma(\zeta;z) \me
f_\delta\bigl(X_t(\varepsilon,\varnothing,\zeta)\bigr) \,\mathrm{d}
\zeta,
\end{equation}
and, thus, an application of {the} Dynkin's formula {(\ref{DynkinF})}
{with $n=2$} to the expectation in the above integral {yields}
%
%
\begin{eqnarray}
\label{NEFl1}&& \me f_\delta\bigl(X_t\bigl(\varepsilon,
\varnothing,z+\gamma(z,J)\bigr)\bigr)
\nonumber
\\[-8pt]
\\[-8pt]
\nonumber
&&\quad=\int \widetilde \Gamma(
\zeta;z)f_\delta(\zeta) \,\mathrm{d}\zeta+t\int\widetilde\Gamma (\zeta;z)
L_{\varepsilon}f_\delta(\zeta) \,\mathrm{d}\zeta
\\
&&\qquad{} +t^{2}\int\widetilde\Gamma(\zeta;z) \int_0^1(1-
\alpha)\me (L_\varepsilon)^2f_\delta\bigl(X_{\alpha t}(
\varepsilon,\varnothing,\zeta )\bigr)\,\mathrm{d}\alpha \,\mathrm{d}\zeta.
\end{eqnarray}
We analyze the limit of each of the three terms on the right-hand side
of the previous equation.
By dominated convergence theorem, the leading term of (\ref{LTT1}) is
given by
\begin{eqnarray*}
H_{0}(z; \vartheta):=\lim_{\delta\downarrow0}\int\widetilde {
\Gamma }(\zeta;z)f_\delta(\zeta)\,\mathrm{d}\zeta=\int\widetilde{\Gamma}(\zeta
;z)I_{[\vartheta,\infty)}(\zeta)\,\mathrm{d}\zeta={\mp \bigl(\gamma (z,J)+z\geq \vartheta
\bigr)}.
\end{eqnarray*}
To compute the limit of the {second term, recall that} $L_\varepsilon
f_\delta=\calD_\varepsilon f_\delta+\calI_\varepsilon f_\delta$ with
$\calD_{\varepsilon}$ and $\calI_{\varepsilon}$ defined as in (\ref
{InfGenSmallJumps}). Then, the term of order $t$ has {the following two
contributions}:
\[
A_\delta:=\int\widetilde\Gamma(\zeta;z) \calD_{\varepsilon}
f_\delta (\zeta) \,\mathrm{d}\zeta,\qquad B_\delta:=\int\widetilde\Gamma(
\zeta;z) \calI_{\varepsilon} f_\delta (\zeta) \,\mathrm{d}\zeta.
\]
Using that $f_{\delta}'(\zeta)=\varphi_{\delta}(\zeta-\vartheta)$ and
by integration by parts, it follows that
\begin{eqnarray*}
A_{\delta}&=\int \bigl(\widetilde\Gamma(\zeta;z){b_{\varepsilon
}(\zeta
)}-\partial_{\zeta}\widetilde\Gamma(\zeta;z)v(\zeta)-\widetilde \Gamma (
\zeta;z)v'(\zeta) \bigr)\varphi_{\delta}(\zeta-\vartheta)\,\mathrm{d}
\zeta,
\end{eqnarray*}
{where we recall that $v(x):=\sigma^{2}(x)/2$ and $b_{\varepsilon
}(x):=b(x)-\int_{|\zeta|\leq{}1}\gamma (x,\zeta
)h_{\varepsilon
}(\zeta)\,\mathrm{d}\zeta$.} Applying~(\ref{BAIdnt}) and {Lemma \ref{LmED}(2)},
\[
\lim_{\delta\downarrow0}A_{\delta}=\widetilde\Gamma(\vartheta
;z){b_{\varepsilon}(\vartheta)}-\partial_{\vartheta}\widetilde \Gamma (
\vartheta;z)v(\vartheta)-\widetilde\Gamma(\vartheta ;z)v'(
\vartheta).
\]
We now analyze the limit of the second term $B_{\delta}$.
{Since $f_{\delta}'(\cdot)=\varphi_{\delta}(\cdot-\vartheta)$ {has
compact support}, we can apply (\ref{DlInOp2}) below to} write
$B_{\delta}$ as
\begin{eqnarray}
\label{FEstNHr} B_{\delta}&=&{\int\varphi_\delta(w-\vartheta)
\widetilde{\calH }_{\varepsilon}\widetilde{\Gamma}(w;z)\,\mathrm{d}w}
\nonumber
\\[-8pt]
\\[-8pt]
\nonumber
&=&{\int\varphi_\delta(w-\vartheta)\int \biggl( \int
_{\bar{\gamma
}(w,\zeta)}^{w} \widetilde\Gamma(\eta;z)\,\mathrm{d}\eta -
\widetilde\Gamma(w;z)\gamma(w,\zeta) \biggr) \bar{h}_\varepsilon (\zeta )\,\mathrm{d}\zeta
\,\mathrm{d}w}.
\nonumber
\end{eqnarray}
Since
\begin{eqnarray*}
\partial^{2}_{\zeta} \biggl( \int_{\bar{\gamma}(w,\zeta)}^{w}
\widetilde \Gamma(\eta;z)\,\mathrm{d}\eta - \widetilde\Gamma(w;z)\gamma(w,\zeta) \biggr)
&=&
-\partial_{\zeta}\widetilde\Gamma\bigl(\bar{\gamma}(w,\zeta);z\bigr)
\bigl(\partial_{\zeta}\bar{\gamma}(w,\zeta) \bigr)^{2}
\\
&&{} -\widetilde\Gamma\bigl(\bar{\gamma}(w,\zeta);z\bigr)\partial^{2}_{\zeta
}
\bar {\gamma}(w,\zeta) -\widetilde\Gamma(w;z)\partial^{2}_{\zeta}
\gamma(w,\zeta),
\end{eqnarray*}
the factor multiplying $ \varphi_\delta(w-\vartheta)$ in (\ref
{FEstNHr}) can be written as
\begin{eqnarray*}
\widetilde{\calH}_{\varepsilon}\widetilde{\Gamma}(w;z)&=&-\int\int
_{0}^{1} \bigl[\partial_{\zeta}\widetilde
\Gamma\bigl(\bar{\gamma }(w,\zeta\beta);z\bigr) \bigl(\partial_{\zeta}
\bar{\gamma}(w,\zeta\beta) \bigr)^{2}+\widetilde \Gamma\bigl(\bar{
\gamma}(w,\zeta\beta);z\bigr)\partial^{2}_{\zeta}\bar {\gamma
}(w,\zeta\beta)
\\
&&\hspace*{38pt}{} +\widetilde\Gamma(w;z)\partial^{2}_{\zeta}\gamma(w,\zeta
\beta ) \bigr](1-\beta)\,\mathrm{d}\beta \zeta^{2}\bar{h}_{\varepsilon}(\zeta)\,\mathrm{d}
\zeta,
\end{eqnarray*}
which shows that $\widetilde{\calH}_{\varepsilon}\widetilde{\Gamma
}(w;z)$ is bounded and continuous in $w$ in light of conditions (C2) and~(C4). Thus, using (\ref{BAIdnt}),
\begin{eqnarray*}
\lim_{\delta\downarrow0}B_{\delta}=\int \biggl( \int
_{\bar{\gamma
}(\vartheta,\zeta)}^{\vartheta} \widetilde\Gamma(\eta;z)\,\mathrm{d}\eta-
\widetilde\Gamma(\vartheta;z)\gamma(\vartheta,\zeta) \biggr) \bar
{h}_\varepsilon(\zeta)\,\mathrm{d}\zeta=:B_{0}(z;\vartheta).
\end{eqnarray*}
Recalling that $\widetilde{\Gamma}(\zeta;z)$ is the density of
$\tilde
{J}:=z+\gamma(z,J)$, $B_{0}(z;\vartheta)$ can also be written as
\[
B_{0}(z;\vartheta) = \int \bigl( \bbp \bigl(z+\gamma(z,J)\geq{}\bar{
\gamma }(\vartheta,\zeta ) \bigr)-\bbp \bigl(z+\gamma(z,J)\geq\vartheta \bigr)-
\widetilde \Gamma (\vartheta;z)\gamma(\vartheta,\zeta) \bigr)
\bar{h}_\varepsilon (\zeta )\,\mathrm{d}\zeta.
\]
Putting together the previous two limits, we obtain the term of order $t$:
\[
H_1(z;\vartheta):=\lim_{\delta\downarrow0}\int\widetilde\Gamma
(\zeta ;z)L_\varepsilon f_\delta(\zeta)\,\mathrm{d}\zeta= {D}(z;\vartheta
)+{I}(z;\vartheta),
\]
with ${D}(z;\vartheta)$ and ${I}(z;\vartheta)$ given as in the
statement of {the lemma}.

Finally, we estimate the remainder term
%
%
\begin{equation}
\label{DfnRmdCT} \breve{\calR}_{t}(z;\vartheta):=\lim
_{\delta\downarrow0}\int \widetilde \Gamma(\zeta;z)\int_0^1(1-
\alpha)\me(L_\varepsilon)^2f_\delta \bigl(X_{\alpha
t}(
\varepsilon,\varnothing,\zeta)\bigr)\,\mathrm{d}\alpha \,\mathrm{d}\zeta
\end{equation}
and show that
this is uniformly bounded for $t$ small enough. Let $\breve{\calR
}_t(z;\vartheta;\delta,\varepsilon)$ be the expression following
$\lim_{\delta\downarrow0}$ and note that
%
%
\begin{eqnarray}\label{DcmCTDfnRmdTr}
\nonumber
\breve{\calR}_t(z;\vartheta,\delta,\varepsilon)&=&\int
\widetilde \Gamma (\zeta;z)\int_0^1(1-\alpha)
\me(\calD_\varepsilon)^2f_\delta \bigl(X_{\alpha
t}(
\varepsilon,\varnothing,\zeta)\bigr)\,\mathrm{d}\alpha \,\mathrm{d}\zeta
\\
&&{} +\int\widetilde\Gamma(\zeta;z)\int_0^1(1-
\alpha)\me(\calI _\varepsilon)^2f_\delta
\bigl(X_{\alpha t}(\varepsilon,\varnothing,\zeta )\bigr)\,\mathrm{d}\alpha \,\mathrm{d}\zeta
\nonumber
\\[-8pt]
\\[-8pt]
\nonumber
&&{} +\int\widetilde\Gamma(\zeta;z)\int_0^1(1-
\alpha)\me\calI _\varepsilon\calD_\varepsilon f_\delta
\bigl(X_{\alpha t}(\varepsilon ,\varnothing,\zeta)\bigr)\,\mathrm{d}\alpha \,\mathrm{d}\zeta
\\
&&{} +\int\widetilde\Gamma(\zeta;z)\,\mathrm{d}\zeta\int_0^1(1-
\alpha)\me \calD _\varepsilon\calI_\varepsilon f_\delta
\bigl(X_{\alpha t}(\varepsilon ,\varnothing,\zeta)\bigr)\,\mathrm{d}\alpha \,\mathrm{d}\zeta.\nonumber
\end{eqnarray}
{The idea is to use} Lemmas \ref{UEDSJ} and \ref{KPDOp} to deal with
the four terms on the right-hand side of the previous equation. For
simplicity, we only give the details for second term, that we denote
hereafter $\bar{I}^{(2)}_{t}(\vartheta;\delta,\varepsilon,z)$. The
other terms can similarly be handled. {First, let us {show} that $\calI
_{\varepsilon}f_{\delta}(\cdot)$ has compact support in light of {our
condition} (\ref{NndegncyCnd}) and the fact that $f'_{\delta}$ has
compact support. Indeed, writing $ \calI_{\varepsilon}f_{\delta}$ as
\begin{eqnarray*}
\calI_{\varepsilon}f_{\delta}(y)&=&\int\int_{0}^{1}
\bigl( f_{\delta}''\bigl(y+\gamma(y,\zeta
\beta)\bigr) \bigl(\partial_{\zeta}\gamma (y,\zeta \beta)
\bigr)^{2}+f_{\delta}'\bigl(y+\gamma(y,\zeta\beta)
\bigr)\partial ^{2}_{\zeta
}\gamma(y,\zeta\beta)
\\
&&\hspace*{28pt}{} - f_{\delta}'(y) \partial^{2}_{\zeta}
\gamma(y,\zeta\beta) \bigr) (1-\beta)\,\mathrm{d}\beta \zeta^{2}
\bar{h}_{\varepsilon}(\zeta)\,\mathrm{d}\zeta,
\end{eqnarray*}
{it} is clear that $\calI_{\varepsilon}f_{\delta}(y)=0$ if $y\notin
{\rm supp}f'_{\delta}$ and $y+\gamma(y,\zeta\beta)\notin\calS
:=({\rm
supp}f'_{\delta})\cap({\rm supp}f''_{\delta})$ for any $\zeta,\beta$.
Since $|1+\partial_{y}\gamma(y,\zeta)|\geq{}\delta$, it follows that,
for $y$ large enough, $y+\gamma(y,\zeta\beta)\notin\calS$ {regardless
of $\zeta$ and $\beta$. Next,} since $\calI_{\varepsilon}f_{\delta
}(\cdot)$ has compact support, we can apply (\ref{DlInOp1}) to get
\begin{eqnarray*}
\bar{I}^{(2)}_{t}(z;\vartheta,\delta,\varepsilon)=\int
\widetilde \Gamma (\zeta;z)\int_0^1(1-\alpha)
\int\calI_{\varepsilon}f_{\delta
}(w)\widetilde{\calI}_{\varepsilon}p_{\alpha t}(w;
\varepsilon ,\varnothing ,\zeta)\,\mathrm{d}w \,\mathrm{d}\alpha \,\mathrm{d}\zeta.
\end{eqnarray*}
Next, let $\tilde{p}_{t}(\eta;\zeta):=\widetilde{\calI
}_{\varepsilon
}p_{t}(\eta;\varepsilon,\varnothing,\zeta)$. {An application of the
identity (\ref{DlInOp2}) followed by Fubini leads to}
\begin{eqnarray*}
&& {\bar{I}^{(2)}_{t}(z;\vartheta,\delta,
\varepsilon)}\nonumber \\
&&\quad= \int f_{\delta}'(w)\int_0^1(1-
\alpha)\int \biggl(\int_{\bar
\gamma
(w,\tilde\zeta)}^{w}\int\widetilde
\Gamma(\zeta;z)\tilde {p}_{\alpha
t}(\eta;\zeta)\,\mathrm{d}\zeta \,\mathrm{d}\eta
 \\
 &&\hspace*{99pt}\qquad{}-\int\widetilde\Gamma(\zeta;z)\tilde{p}_{\alpha t}(w;\zeta)\,\mathrm{d}\zeta \gamma
(w,\tilde\zeta) \biggr)\bar{h}_{\varepsilon}(\tilde\zeta)\,\mathrm{d}\tilde \zeta \,\mathrm{d}\alpha
\,\mathrm{d}w.\nonumber
\end{eqnarray*}
{Now,} fix $\breve{p}_{t}(\eta;z,\varepsilon):=\int\widetilde
{\Gamma
}(\zeta;z)p_{t} (\eta;\varepsilon,\varnothing,\zeta
)\,\mathrm{d}\zeta$
{and note that
%
%
\begin{equation}
\label{EqSRelPar} {\breve{p}_{t}'(\eta;z,\varepsilon)=
\partial_{\eta} \int \widetilde \Gamma(\zeta;z)p_{t}(\eta;
\varepsilon,\varnothing,\zeta)\,\mathrm{d}\zeta= \int\widetilde\Gamma(\zeta;z)p_{t}'(
\eta;\varepsilon,\varnothing ,\zeta )\,\mathrm{d}\zeta},
\end{equation}
in light of the last statement of Lemma \ref{UEDSJ}, which will
allow us to pass the derivative into the integration sign. Using (\ref
{EqSRelPar}) and Fubini's theorem, it follows that}
%
%
\begin{eqnarray}
\int\widetilde\Gamma(\zeta;z)\tilde{p}_{\alpha t}(\eta;\zeta )\,\mathrm{d}\zeta =\int
\widetilde\Gamma(\zeta;z)\widetilde{\calI}_{\varepsilon
}p_{\alpha
t}(\eta;
\varepsilon,\varnothing,\zeta)\,\mathrm{d}\zeta=\widetilde{\calI }_{\varepsilon}
\breve{p}_{\alpha t}(\eta;z,\varepsilon).\label {EqRepInttildeI}
\end{eqnarray}
Therefore,
%
%
\begin{eqnarray}
{\bar{I}^{(2)}_{t}(z;\vartheta,\delta,
\varepsilon)} =\sum_{j=1}^{2} \int
f_{\delta}'(w)\bar {I}^{(2,j)}_{t}(w;z,
\varepsilon ) \,\mathrm{d}w, \label{EqRepbarI2}
\end{eqnarray}
where
\begin{eqnarray*}
&&\bar{I}^{(2,1)}_{t}(w;z,\varepsilon)\\
&&\quad=-\int
_0^1(1-\alpha)\int\int_{0}^{1}(
\widetilde{\calI}_{\varepsilon}\breve{p}_{\alpha t})'\bigl(
\bar \gamma(w,\tilde\zeta\tilde\beta);z,\varepsilon\bigr) (\partial_{\zeta
}
\bar \gamma) (w,\tilde\zeta\tilde\beta) (1-\tilde{\beta})\,\mathrm{d}\tilde\beta \tilde
\zeta^{2} \bar{h}_{\varepsilon}(\tilde\zeta)\,\mathrm{d}\tilde\zeta \,\mathrm{d}\alpha,
\\
&&\bar{I}^{(2,2)}_{t}(w;z,\varepsilon)=-\int
_0^1(1-\alpha)\widetilde {\calI}_{\varepsilon}
\breve{p}_{\alpha t}(w;z,\varepsilon)\int\int_{0}^{1}
\bigl(\partial_{\zeta}^{2}\gamma\bigr) (w,\tilde\zeta\tilde
\beta) (1-\tilde \beta )\,\mathrm{d}\tilde\beta \tilde\zeta^{2}
\bar{h}_{\varepsilon}(\tilde\zeta)\,\mathrm{d}\tilde \zeta \,\mathrm{d}\alpha.
\end{eqnarray*}
Now, let us define the operator
\[
\hat{\calI} g(y;\zeta):=g\bigl(\bar{\gamma}(y,\zeta)\bigr)\partial_{y}
\bar {\gamma }(y,\zeta) -\bigl(1+\partial_{y}\gamma(y,\zeta)
\bigr)g(y)-g'(y)\gamma(y,\zeta).
\]
By writing $\widetilde{\calI}_{\varepsilon}g(y)$ as
\[
\widetilde{\calI}_{\varepsilon}g(y)=\int\int_{0}^{1}
\bigl(\partial _{\zeta
}^{2}\hat{\calI} g\bigr) (y;\bar\zeta\bar
\beta) (1-\bar\beta)\,\mathrm{d}\bar \beta \bar\zeta^{2}\bar{h}_{\varepsilon}(
\bar\zeta)\,\mathrm{d}\bar\zeta,
\]
it is not hard to see that $\widetilde{\calI}_{\varepsilon}\breve
{p}_{\alpha t}(w;z,\varepsilon)$ can be expressed as follows
%
%
\begin{eqnarray}
\label{EqRepTildeI}
\widetilde{\calI}_{\varepsilon}
\breve{p}_{\alpha t}(w;z,\varepsilon) &=&\sum_{k=0}^{2}
\int\int_{0}^{1}\breve{p}_{\alpha t}^{(k)}
\bigl(\bar {\gamma }(w,\bar\zeta\bar\beta);z,\varepsilon\bigr)
\calD^{(1)}_{k}(w;\bar\zeta\bar\beta) (1-\bar\beta)\,\mathrm{d}\bar{
\beta} \bar \zeta^{2}\bar{h}_{\varepsilon}(\bar\zeta)\,\mathrm{d}\bar\zeta
\nonumber
\\[-8pt]
\\[-8pt]
\nonumber
&&{}+\sum_{k=0}^{1} \breve{p}_{\alpha t}^{(k)}(w;z,
\varepsilon)\int \int_{0}^{1}
\calD^{(2)}_{k}(w;\bar\zeta\bar\beta) (1-\bar\beta)\,\mathrm{d}\bar{
\beta} \bar \zeta^{2}\bar{h}_{\varepsilon}(\bar\zeta)\,\mathrm{d}\bar\zeta,
\end{eqnarray}
where $\calD_{j}^{1}(w;\zeta)$ is a finite sum of terms, which consists
of the product of partial derivatives of $\bar{\gamma}(w;\zeta)$.
Similarly, $\calD_{j}^{2}(w;\zeta)$ is a finite sum of terms, which
consists of the product of partial derivatives of ${\gamma}(w;\zeta)$.
In particular, both $\calD_{j}^{1}(w;\zeta)$ and $\calD
_{j}^{2}(w;\zeta
)$ are uniformly bounded and continuous and, also, in light of Lemma
\ref{UEDSJ}, $(\widetilde{\calI}_{\varepsilon}\breve{p}_{\alpha
t})'(w;z,\varepsilon)$ will also be of the same form as~(\ref{EqRepTildeI}).

Upon the substitutions of (\ref{EqRepTildeI}) (and the analog
representation for $(\widetilde{\calI}_{\varepsilon}\breve
{p}_{\alpha
t})'(w;z,\varepsilon)$) into~(\ref{EqRepbarI2}),
we can represent $\bar{I}^{(2)}_{t}(z;\vartheta,\delta,\varepsilon
)$ as
the sum of terms of the form
\[
{\int f_{\delta}'(w)\int_0^1(1-
\alpha)\tilde{I}_{\alpha
t}(w;z,\varepsilon)\,\mathrm{d}\alpha \,\mathrm{d}w},
\]
where $\tilde{I}_{\alpha t}(w;z,\varepsilon)$ will take one of the
following four generic {forms with some function $\widetilde
{D}(w,\zeta
)$ in $C^{\geq{}1}_{b}(\bbr\times\bbr)$}:
%
%
\begin{eqnarray}
\nonumber
\tilde{I}^{(1)}_{\alpha t}(w;z,\varepsilon)&=&
\int\int_{0}^{1} \int\int_{0}^{1}
\breve{p}_{\alpha t}^{(k)}\bigl(\bar{\gamma}\bigl(\bar \gamma (w,
\tilde\zeta\tilde\beta),\bar\zeta\bar\beta\bigr);z,\varepsilon \bigr)\widetilde
\calD\bigl(\bar\gamma(w,\tilde\zeta\tilde\beta);\bar\zeta\bar\beta \bigr)
\\
&&\hspace*{52pt}{} \times(1-\bar\beta)\,\mathrm{d}\bar{\beta} \bar\zeta^{2} \bar
{h}_{\varepsilon
}(\bar\zeta)\,\mathrm{d}\bar\zeta (\partial_{\zeta}\bar\gamma) (w,
\tilde\zeta\tilde\beta) (1-\tilde {\beta })\,\mathrm{d}\tilde\beta \tilde\zeta^{2}
\bar{h}_{\varepsilon}(\tilde \zeta )\,\mathrm{d}\tilde\zeta,
\nonumber
\\
\tilde{I}^{(2)}_{\alpha t}(w;z,\varepsilon)&=&\int\int
_{0}^{1} \int\int_{0}^{1}
\widetilde\calD\bigl(\bar\gamma(w,\tilde\zeta\tilde \beta );\bar\zeta\bar\beta
\bigr) (1-\bar\beta)\,\mathrm{d}\bar{\beta} \bar\zeta ^{2} \bar
{h}_{\varepsilon}(\bar\zeta)\,\mathrm{d}\bar\zeta
\nonumber
\\[-8pt]
\\[-8pt]
\nonumber
&&\hspace*{52pt}{} \times\breve{p}_{\alpha t}^{(k)}\bigl(\bar\gamma(w,\tilde\zeta
\tilde \beta );z,\varepsilon\bigr) (\partial_{\zeta}\bar\gamma) (w,\tilde
\zeta\tilde\beta) (1-\tilde {\beta })\,\mathrm{d}\tilde\beta \tilde\zeta^{2}
\bar{h}_{\varepsilon}(\tilde \zeta )\,\mathrm{d}\tilde\zeta,\qquad
\\
\tilde{I}^{(3)}_{\alpha t}(w;z,\varepsilon)&=& \int\int
_{0}^{1}\breve {p}_{\alpha t}^{(k)}
\bigl(\bar{\gamma}(w,\bar\zeta\bar\beta );z,\varepsilon\bigr) \widetilde{\calD}(w;
\bar\zeta\bar\beta) (1-\bar\beta)\,\mathrm{d}\bar {\beta} \bar \zeta^{2}
\bar{h}_{\varepsilon}(\bar\zeta)\,\mathrm{d}\bar\zeta\label {CmplGenTrm0}
\nonumber\\
&&{} \times\int\int_{0}^{1} \bigl(
\partial_{\zeta}^{2}\gamma\bigr) (w,\tilde\zeta\tilde\beta) (1-
\tilde \beta )\,\mathrm{d}\tilde\beta \tilde\zeta^{2} \bar{h}_{\varepsilon}(
\tilde\zeta)\,\mathrm{d}\tilde \zeta,
\nonumber
\\
\tilde{I}^{(4)}_{\alpha t}(w;z,\varepsilon)&=&\breve{p}_{\alpha
t}^{(k)}(w;z,
\varepsilon)\int\int_{0}^{1} \widetilde\calD(w;
\bar\zeta\bar\beta) (1-\bar\beta)\,\mathrm{d}\bar{\beta } \bar \zeta^{2}
\bar{h}_{\varepsilon}(\bar\zeta)\,\mathrm{d}\bar\zeta
\nonumber
\\
&&{} \times\int\int_{0}^{1} \bigl(
\partial_{\zeta}^{2}\gamma\bigr) (w,\tilde\zeta\tilde\beta) (1-
\tilde \beta )\,\mathrm{d}\tilde\beta \tilde\zeta^{2} \bar{h}_{\varepsilon}(
\tilde\zeta)\,\mathrm{d}\tilde \zeta .
\nonumber
\end{eqnarray}
Using Lemma \ref{UEDSJ}, it is now clear that each $\tilde{I}_{\alpha
t}^{(i)}(w;z,\varepsilon)$ is uniformly bounded in $w$ and $z$ for $t$
small enough. Concretely, using (\ref{WNADSJ2}), it follows that, for
$\varepsilon,t>0$ small enough,
%
%
\begin{equation}
\label{BndI1} \sup_{z\in\bbr,w\in{}{\rm supp} f_{1}}\bigl|\tilde {I}_{t}^{(i)}(w;z,
\varepsilon)\bigr|<\infty.
\end{equation}
Due to the continuity $\tilde{I}_{t}^{(i)}(w;z,\varepsilon)$ and
uniformly boundedness condition (\ref{BndI1}), it turns out that
%
%
\begin{equation}
\label{EqGenlimterm} \lim_{\delta\to{}0}\int f_{\delta}'(w)
\int_0^1(1-\alpha)\tilde {I}_{\alpha t}(w;z,
\varepsilon)\,\mathrm{d}\alpha \,\mathrm{d}w=\int_0^1(1-\alpha)\tilde
{I}_{\alpha t}(\vartheta;z,\varepsilon)\,\mathrm{d}\alpha,
\end{equation}
which is uniformly bounded in $z$ for any fixed $\vartheta$ and
$0<t<t_0$ with $t_{0}>0$ small enough.}
\end{pf*}

\subsection{The leading term}
In order to determine the leading term of (\ref{TDJD}), we analyze the
second term in (\ref{FndDcmp}) corresponding to $n=1$ (only one
``large'' jump). {Again, we emphasize that in order to obtain the
expansion for the transition densities below, we will need to write
explicitly the remainder terms when applying Dynkin's formula {(\ref
{DynkinF})}.}

By conditioning on the time of the jump (necessarily uniformly
distributed on $[0,t]$), 
%
%
\begin{eqnarray}
\label{Cnd1Jmp} \bbp\bigl(X_{t}(x)\geq{}x+y\vert
N_{t}^{\varepsilon}=1\bigr)=\frac
{1}{t}\int
_{0}^{t}\bbp\bigl(X_{t}\bigl(
\varepsilon,\{s\},x\bigr)\geq{}x+y\bigr)\,\mathrm{d}s.
\end{eqnarray}
Conditioning on {$\msf_{s^{-}}$},
%
%
\begin{equation}
\bbp\bigl(X_{t}\bigl(\varepsilon,\{s\},x\bigr)\geq{}x+y\bigr) =\bbe
\bigl( G_{t-s} \bigl(X_{s^{-}}(\varepsilon,\varnothing,x) \bigr)
\bigr)=\bbe \bigl( G_{t-s} \bigl(X_{s}(\varepsilon,
\varnothing ,x) \bigr) \bigr), \label{Simpl1}
\end{equation}
where
%
%
\begin{eqnarray}
\label{FTE1} G_{t}(z)&:=G_{t}(z;x,y):={\bbp
\bigl[X_{t} \bigl(\varepsilon ,\varnothing ,z+\gamma(z,J) \bigr)
\geq{}x+y \bigr]}.
\end{eqnarray}
{Using Lemma \ref{Lmm2MnLm},}
%
%
\begin{eqnarray}
\label{Dcmp3T1}
&&
\bbp\bigl(X_{t}\bigl(\varepsilon,\{s\},x
\bigr)\geq{}x+y\bigr)\nonumber\\
&&\quad=\bbe H_{0}\bigl(X_{s}(\varepsilon ,
\varnothing,x);{x+y}\bigr)+(t-s)\bbe H_{1}\bigl(X_{s}(
\varepsilon,\varnothing ,x);{x+y}\bigr)\qquad
\\
&&\qquad{} + {(t-s)^{2}\bbe\calR^{1}_{t-s}
\bigl(X_{s}(\varepsilon,\varnothing ,x);x,y\bigr)},\nonumber
\end{eqnarray}
where $\calR^{1}_{t}(w;x,y):=\breve{\calR}_{t}(w;x+y)$.
Next, we apply {the Dynkin's formula (\ref{DynkinF})} {with $n=2$} to
$\bbe H_{0}(X_{s}(\varepsilon,\varnothing,x);{x+y})$, {which is valid since
$H_{0}(z;x+y)=\bbp (\gamma(z,J)+z\geq{}x+y )$ is $C_{b}^{4}$
in light of Lemma \ref{LmED}(3).}
By {(\ref{DynkinF})},
%
%
\begin{equation}
\label{Dcmp2} \bbe H_{0}\bigl(X_{s}(\varepsilon,
\varnothing,x);x+y\bigr)= H_{0,0}(x;y)+ s{H}_{0,1}(x;y)+{s^{2}
\calR_{s}^{2}(x;y)},
\end{equation}
where
%
%
\begin{eqnarray}\label{NETBP0}
H_{0,0}(x;y)&:=&H_{0}(x;x+y)= \bbp \bigl[ \gamma(x,J)\geq{}y
\bigr],
\nonumber
\\
{H}_{0,1}(x;y)&:=&(L_{\varepsilon}H_{0}) (x;x+y) =
{b_{\varepsilon}(x)}\frac{\partial H_{0}(z;x+y)}{\partial z}\bigg |_{z=x}\nonumber\\
&&\hspace*{82pt}{}+ \frac{\sigma^{2}(x)}{2}
\frac{\partial^{2}H_{0}(z;x+y)}{\partial
z^{2}} \bigg|_{z=x}
\nonumber
\\[-8pt]
\\[-8pt]
\nonumber
&&\hspace*{82pt}{} +\int \biggl(H_{0}\bigl(x+\gamma(x,\zeta );x+y\bigr)-H_{0}(x;x+y)
\\
 &&\hspace*{108pt}{}-{\gamma(x,\zeta)\frac{\partial_{z} H_{0}(z;x+y)}{\partial z} \bigg|_{z=x}} \biggr)
\bar{h}_{\varepsilon}(\zeta) \,\mathrm{d}\zeta,
\nonumber\\
\calR_{s}^{2}(x;y)&:=&{\int_{0}^{1}
(1-\alpha) \bbe \bigl(L^{2}_{\varepsilon
} H_{0}\bigr)
\bigl(X_{\alpha s}(\varepsilon,\varnothing,x);x+y\bigr)\,\mathrm{d}\alpha}.
\nonumber
\end{eqnarray}
Note that $\sup_{s<1,x,y}|\calR_{s}^{2}(x;y)|<\infty$ in light of
{Lemma \ref{RemLoc}} and, also, by writing $\bbp [ \tilde\gamma
(z,J)\geq{}x+y ]=\bbp [ \gamma(z,J)\geq{}x+y-z ]$ {as
$G(z,x+y-z)$} with $G(x,y)=\bbp (\gamma(x,J)\geq{}y )$,
we {have}
\begin{eqnarray*}
{\frac{\partial H_{0}(z;x+y)}{\partial z} \bigg|_{z=x}}&=& \frac
{\partial
\bbp [ \tilde\gamma(z,J)\geq{}x+y ]}{\partial z} \bigg|_{z=x}=
\frac{\partial\bbp [ \gamma(x,J)\geq{}y ]}{\partial
x}+\Gamma(y;x),
\\
{\frac{\partial^{2} H_{0}(z;x+y)}{\partial z^{2}} \bigg|_{z=x}}&=&\frac
{\partial^{2}\bbp [ \tilde\gamma(z,J)\geq{}x+y
]}{\partial
z^{2}}\bigg |_{z=x}\\
&=&
\frac{\partial^{2}\bbp [\gamma(x,J)\geq
{}y
]}{\partial x^{2}}+2\frac{\partial\Gamma(y;x)}{\partial x}-\frac
{\partial\Gamma(y;x)}{\partial y}.
\end{eqnarray*}
Substituting the previous identities in (\ref{NETBP0}), we can write
$H_{0,1}(x;y)$ as
\begin{eqnarray}
\label{NETBP}
\nonumber
H_{0,1}(x;y)&=&{b_{\varepsilon}(x)} \biggl(
\frac{\partial\bbp [
\gamma
(x,J)\geq{}y ]}{\partial x}+\Gamma(y;x) \biggr)
\nonumber
\\[-8pt]
\\[-8pt]
\nonumber
&&{} +\frac{\sigma^{2}(x)}{2} \biggl( \frac{\partial^{2}\bbp
[\gamma
(x,J)\geq{}y ]}{\partial x^{2}}+2\frac{\partial\Gamma
(y;x)}{\partial x}-
\frac{\partial\Gamma(y;x)}{\partial y} \biggr) +\hat{H}_{0,1}(x;y),
\end{eqnarray}
with $\hat{H}_{0,1}(x;y)$ given by
\begin{eqnarray}
\label{MPTN1} \hat{H}_{0,1}(x;y)&=& \int \biggl(\bbp \bigl[ \gamma
\bigl(x+\gamma (x,\zeta ),J\bigr)\geq{}y-\gamma(x,\zeta) \bigr] -\bbp \bigl[
\gamma(x,J)\geq {}y \bigr]
\nonumber
\\[-8pt]
\\[-8pt]
\nonumber
& &\hspace*{14pt}{}-{\gamma(x,\zeta) \biggl( \frac{\partial\bbp [
\gamma
(x,J)\geq{}y ]}{\partial x}+\Gamma(y;x) \biggr)} \biggr)\bar
{h}_{\varepsilon}(\zeta) \,\mathrm{d}\zeta.
\nonumber
\end{eqnarray}

Plugging (\ref{Dcmp2}) in (\ref{Dcmp3T1}) {and recalling from Lemma
\ref
{Lmm2MnLm} that the second and third terms on the right-hand side of
(\ref{Dcmp3T1}) are bounded for $t$ small enough}, we get that
\[
\bbp\bigl(X_{t}\bigl(\varepsilon,\{s\},x\bigr)\geq{}x+y\bigr)=\bbp
\bigl[ \gamma (x,J)\geq {}y \bigr]+\mathrm{O}(t).
\]
The latter can then be plugged in (\ref{Cnd1Jmp}) to get
\[
\bbp\bigl(X_{t}(x)\geq{}x+y\vert N_{t}^{\varepsilon}=1
\bigr)=\bbp \bigl[ \gamma(x,J)\geq{}y \bigr]+\mathrm{O}(t).
\]
Finally, (\ref{FndDcmp}) can be written as
%
%
\begin{eqnarray}
\label{EqLedOrTrm}
\bbp\bigl(X_{t}(x)\geq{}x+y
\bigr)&=&\mathrm{e}^{-\lambda_{\varepsilon}t} t\lambda _{\varepsilon} \bbp \bigl[ \gamma(x,J)\geq{}y
\bigr]+\mathrm{O}\bigl(t^{2}\bigr)
\nonumber
\\[-8pt]
\\[-8pt]
\nonumber
&=& t\int\mathbf{1}_{\{\gamma(x,\zeta)\geq{}y\}}h(\zeta)\,\mathrm{d}\zeta +\mathrm{O}\bigl(t^{2}\bigr),
\end{eqnarray}
{where, in the first equality, we used (\ref{FndBnd1}) to justify that
$\bbp(X_{t}(x)\geq{}x+y\vert  N_{t}^{\varepsilon}=0)=\bbp
(X_{t}(\varepsilon,\varnothing,x)\geq{}x+y)=\mathrm{O}(t^{2})$ while, in the
second equality above, we take} $\varepsilon>0$ small enough. Equation~{(\ref
{EqLedOrTrm}) gives first-order asymptotic expansion of the tail
probability $\bbp(X_{t}(x)\geq{}x+y)$. We now proceed to obtain the
second-order term.}

\subsection{Second-order term}
In addition to (\ref{Dcmp2}), we also consider the leading terms
in the term $\bbe H_{1}(X_{s}(\varepsilon,\varnothing,x);{x+y})$ of~(\ref
{Dcmp3T1}) and
the term $\bbp(X_{t}(x)\geq{}x+y\vert  N_{t}^{\varepsilon}=2)$
of (\ref{FndDcmp}).
Let us first show that $z\to H_{1}(z;x+y)$ is $C^{2}_{b}$. {To this
end,} let
\begin{eqnarray*}
\calK(\zeta;x,y,z)&:=&\bbp \bigl[ z+\gamma(z,J)\geq{}\bar\gamma (x+y,\zeta )
\bigr] -\bbp \bigl[ z+\gamma(z,J)\geq{}x+y \bigr]\\
&&{}-{\widetilde{\Gamma }(x+y;z)
\gamma(x+y,\zeta)},
\end{eqnarray*}
and recall that
\begin{eqnarray*}
H_{1}(z;x+y)&=&\widetilde\Gamma(x+y;z){b_{\varepsilon
}(x+y)}-(
\partial _{\zeta}\widetilde\Gamma) (x+y;z)v(x+y)
\\
&&{} -\widetilde\Gamma(x+y;z)v'(x+y)+\int\calK(\zeta;x,y,z)\bar
{h}_{\varepsilon}(\zeta) \,\mathrm{d}\zeta,
\end{eqnarray*}
{where $\partial_{\zeta}\widetilde\Gamma$ and $\partial
_{z}\widetilde
\Gamma$ denote the partial derivatives of the density $\widetilde
\Gamma
(\zeta;z)$.}
Obviously, the first three terms on the right-hand side of the previous
expression are $C^{2}_{b}$ in light of {Lemma \ref{LmED}(2).}
Hence, for the derivative $\partial_{z} H_{1}(z;x+y)$ to exist, it
suffices to show that $\partial_{z} \calK(\zeta;x,y,z)$ exists and that
%
%
\begin{equation}
\label{IntStpDffH1} \sup_{z,x,y}\biggl\llvert \frac{\partial\calK(\zeta;x,y,z)}{\partial
z}
\biggr\rrvert <C {|\zeta|^{2}}
\end{equation}
for any $|\zeta|<\varepsilon$ and some constant $C<\infty$.
Recalling that
\begin{eqnarray*}
\calK(\zeta;x,y,z)&=&\int_{\bar\gamma(x+y,\zeta)}^{x+y}\widetilde {
\Gamma }(\eta;z)\,\mathrm{d}\eta-{\widetilde{\Gamma}(x+y;z)\gamma(x+y,\zeta)}
\\
&=&\int_{0}^{1} \bigl[(\partial_{\zeta}
\widetilde{\Gamma}) \bigl(\bar \gamma (x+y,\zeta\beta);z\bigr) (
\partial_{\zeta}\bar\gamma) (x+y,\zeta\beta)\\
&&\hspace*{17pt}{} - \widetilde{\Gamma}(x+y;z) \bigl(\partial^{2}_{\zeta
}
\gamma \bigr) (x+y,\zeta\beta) \bigr](1-\beta)\,\mathrm{d}\beta\zeta^{2}
\end{eqnarray*}
and using that $\widetilde{\Gamma}(\eta;z)\in C_{b}^{\infty}$, {we can
write $\partial_{z} \calK(\zeta;x,y,z)$ as}
\begin{eqnarray*}
&&\int_{0}^{1} \bigl(\bigl(\partial^{2}_{z,\zeta}
\widetilde{\Gamma }\bigr) \bigl(\bar\gamma (x+y,\zeta\beta);z\bigr) (
\partial_{\zeta}\bar\gamma) (x+y,\zeta\beta)\\
&&\hspace*{18pt}{}- (\partial _{z}
\widetilde {\Gamma}) (x+y;z) \bigl(\partial^{2}_{\zeta}\gamma
\bigr) (x+y,\zeta\beta ) \bigr) (1-\beta)\,\mathrm{d}\beta\zeta^{2}.
\end{eqnarray*}
Therefore, in light of Lemma \ref{LmED} {and the fact that $\gamma
\in
C^{\geq{}1}_{b}$}, there exists a constant $C$ such that~(\ref
{IntStpDffH1}) holds.
We can similarly prove that $\partial^{2}_{z} H_{1}(z;x,y)$ exists and
is bounded.

{Using {Dynkin's formula} (\ref{DynkinF}) with $n=1$} and that
$\widetilde{\Gamma}(\zeta;z)=\Gamma(\zeta-z;z)$, we get
%
%
\begin{equation}
\label{Dcmp3} \bbe H_{1}\bigl(X_{s}(\varepsilon,
\varnothing,x);x,y\bigr)= {H}_{1,0}(x,y)+s \calR ^{3}_{s}(x;y),
\end{equation}
where
%
%
\begin{eqnarray}\label{EqDfnH10a}
\nonumber
{H}_{1,0}(x;y)&:=&H_{1}(x;x+y)=
\calD_{1,0}(x;y)+\hat {H}_{1,0}(x;y)\qquad \mbox{with}
\\
\calD_{1,0}(x;y)&:=&\Gamma(y;x){b_{\varepsilon}(x+y)}-(\partial
_{\zeta
}\Gamma) (y;x)v(x+y)-\Gamma(y;x)v'(x+y),
\nonumber
\\
\hat{H}_{1,0}(x;y) &:=&\int \bigl(\bbp \bigl[ x+\gamma(x,J)\geq {}\bar
\gamma(x+y,\zeta) \bigr] -\bbp \bigl[ {\gamma(x,J)\geq{}y} \bigr]\\
&&\quad{} -{{\Gamma}(y;x)\gamma(x+y,\zeta)} \bigr)\bar{h}_{\varepsilon}(\zeta) \,\mathrm{d}\zeta,
\nonumber\\
\calR^{3}_{s}(x;y)&:=&\int_{0}^{1}
\bbe L_{\varepsilon
}H_{1}\bigl(X_{\alpha
s}(\varepsilon,
\varnothing,x);x+y\bigr)\,\mathrm{d}\alpha=\mathrm{O}(1)\qquad \mbox{as }s\to {}0.
\nonumber
\end{eqnarray}

In order to handle $\bbp(X_{t}(x)\geq{}x+y\vert
N_{t}^{\varepsilon}=2)$, we again condition on the times of the jumps,
which are necessarily distributed as the order statistics of two
independent uniform $[0,t]$ random variables. Concretely,
%
%
\begin{eqnarray}
\label{ScndOrdEq2} \bbp\bigl(X_{t}(x)\geq{}x+y\vert
N_{t}^{\varepsilon}=2\bigr)=\frac
{2}{t^{2}}\int
_{0}^{t} \int_{s_{1}}^{t}
\bbp\bigl(X_{t}\bigl(\varepsilon,\{s_{1},s_{2}
\},x\bigr)\geq {}x+y\bigr)\,\mathrm{d}s_{2}\,\mathrm{d}s_{1}.
\end{eqnarray}
Next, we determine the leading term of $\bbp(X_{t}(\varepsilon,\{
s_{1},s_{2}\},x)\geq{}x+y)$. By conditioning on $\msf_{s^{-}_{2}}$,
\[
\bbp\bigl(X_{t}\bigl(\varepsilon,\{s_{1},s_{2}
\},x\bigr)\geq{}x+y\bigr)=\bbe \bigl( G_{t-s_{2}} \bigl(X_{s_{2}}
\bigl(\varepsilon,\{s_{1}\},x\bigr) \bigr) \bigr),
\]
where, by Lemma \ref{Lmm2MnLm},
%
%
\begin{eqnarray}
\label{FTE12} G_{t}(z)&=&\bbp \bigl[X_{t} \bigl(\varepsilon,
\varnothing,z+\gamma (z,J) \bigr)\geq{}x+y \bigr]
\nonumber
\\[-8pt]
\\[-8pt]
\nonumber
&=&H_{0}(z;x+y)
+tH_{1}(z;x+y)+t^{2}\breve{\calR}_{t}(z;x+y).
\end{eqnarray}
Then, for $\varepsilon>0$ and $t$ small enough,
%
%
\begin{eqnarray}
\label{ScndOrdEq3} &&\bbp\bigl(X_{t}\bigl(\varepsilon,
\{s_{1},s_{2}\},x\bigr)\geq{}x+y\bigr)
\nonumber
\\[-8pt]
\\[-8pt]
\nonumber
&&\quad=\bbe \bigl(
H_{0} \bigl(X_{s_{2}}\bigl(\varepsilon,\{s_{1}\},x
\bigr);x+y \bigr) \bigr)
 + (t-s_{2})\bbe\calR^{4}_{t-s_{2}}
\bigl(X_{s_{2}}\bigl(\varepsilon,\{ s_{1}\} ,x\bigr);x,y
\bigr),
\end{eqnarray}
with
\[
 \calR^{4}_{t}(z;x,y):=H_{1}(z;x+y)+t
\breve{\calR }_{t}(z;x+y).
\]
Again, conditioning on $\msf_{s^{-}_{1}}${,}
\[
\bbe \bigl( H_{0} \bigl(X_{s_{2}}\bigl(\varepsilon,
\{s_{1}\},x\bigr);x+y \bigr) \bigr)=\bbe \bigl(\widehat{G}_{s_{2}-s_{1}}
\bigl(X_{s_{1}}(\varepsilon ,\varnothing,x);x+y \bigr) \bigr),
\]
where
\[
\widehat{G}_{t}(z;x+y):=\bbe H_{0} \bigl(X_{t}
\bigl(\varepsilon,\varnothing ,z+\gamma(z,J)\bigr);x+y \bigr).
\]
Since $z\to H_{0}(z;x+y)=\bbp(z+\gamma(z,J)\geq{}x+y)$ is
$C_{b}^{\infty
}$ by Lemma \ref{LmED}(3), {we can apply Dynkin's formula} {(\ref
{DynkinF}) with $n=1$} to deduce
\begin{eqnarray*}
\widehat{G}_{t}(z;x+y)&=& \int\widetilde\Gamma(\zeta;z) \me
H_{0} \bigl(X_t(\varepsilon ,\varnothing ,\zeta);x+y
\bigr) \,\mathrm{d}\zeta\\
&=&\int\widetilde\Gamma(\zeta;z) H_{0} (\zeta;x+y ) \,\mathrm{d}\zeta+t
\calR_{t}^{6}(z;x,y)
\\
&=:& H_{2}(z;x+y)+t \calR_{t}^{6}(z;x,y),
\end{eqnarray*}
where, denoting two independent copies of $J$ by $J_{1},J_{2}$,
\begin{eqnarray*}
H_{2}(z;x+y)&:=&\bbp \bigl(z+\gamma(z,J_{1})+\gamma\bigl(z+
\gamma (z,J_{1}),J_{2}\bigr)\geq x+y \bigr),
\\
\calR_{t}^{6}(z;x,y)&:=&\int\widetilde\Gamma(\zeta;z) \int
_{0}^{1}\bbe L_{\varepsilon}H_{0}
\bigl(X_{\alpha t}(\varepsilon ,\varnothing,\zeta);x+y \bigr) \,\mathrm{d}\alpha \,\mathrm{d}\zeta.
\end{eqnarray*}
Therefore,
\begin{eqnarray*}
&&\bbp\bigl(X_{t}\bigl(\varepsilon,\{s_{1},s_{2}
\},x\bigr)\geq{}x+y\bigr)\\
&&\quad=\bbe \bigl( H_{2} \bigl(X_{s_{1}}(
\varepsilon,\varnothing,x);x+y \bigr) \bigr)+(s_{2}-s_{1})
\bbe\calR_{s_{2}-s_{1}}^{6}\bigl(X_{s_{1}}(\varepsilon ,
\varnothing,x);x,y\bigr)
\\
&&\qquad{}+(t-s_{2})\bbe\calR^{4}_{t-s_{2}}
\bigl(X_{s_{2}}\bigl(\varepsilon,\{ s_{1}\} ,x\bigr);x,y
\bigr).
\end{eqnarray*}
Applying again {Dynkin's formula} {(\ref{DynkinF}) with $n=1$} to the
first term on the right-hand side of the previous equation, {we can write}
%
%
\begin{eqnarray}
\label{ExNdLt}
&&\bbp\bigl(X_{t}\bigl(\varepsilon,\{s_{1},s_{2}
\},x\bigr)\geq{}x+y\bigr)\nonumber\\
&&\quad= H_{2,0}(x;y)+s_{1}
\calR^{5}_{s_{1}}(x;y)
\nonumber
\\[-8pt]
\\[-8pt]
\nonumber
&&\qquad{} +(s_{2}-s_{1})\bbe\calR _{s_{2}-s_{1}}^{6}
\bigl(X_{s_{1}}(\varepsilon ,\varnothing,x);x,y\bigr)
\\
&&\qquad{} +(t-s_{2})\bbe\calR^{4}_{t-s_{2}}
\bigl(X_{s_{2}}\bigl(\varepsilon ,\{ s_{1}\},x\bigr);x,y
\bigr),
\nonumber
\end{eqnarray}
where
\begin{eqnarray*}
H_{2,0}(x;y)&:=&H_{2} (x;x+y )=\bbp \bigl(\gamma
(x,J_{1})+\gamma \bigl(x+\gamma(x,J_{1}),J_{2}
\bigr)\geq{}y \bigr),
\\
\calR^{5}_{s_{1}}(x;y)&:=&\int_{0}^{1}
\bbe L_{\varepsilon
}H_{2}\bigl(X_{\alpha s_{1}}(\varepsilon,
\varnothing,x);x+y\bigr)\,\mathrm{d}\alpha.
\end{eqnarray*}
Therefore, we conclude that
%
%
\begin{equation}
\label{EST} \bbp \bigl(X_{t}(x)\geq{}x+y|N_{t}^{\varepsilon}=2
\bigr)=H_{2,0}(x;y)+\mathrm{O}(t).
\end{equation}
In light of (\ref{Cnd1Jmp}), {(\ref{Dcmp3T1})--(\ref{NETBP})}, (\ref
{Dcmp3}), and (\ref{EST}), we have the following second-order
decomposition of the tail distribution $\bbp(X_{t}(x)\geq{}x+y)$:
\begin{eqnarray*}
&&\bbp\bigl(X_{t}(x)\geq{}x+y\bigr)\\
&&\quad=\mathrm{e}^{-\lambda_{\varepsilon} t}
\lambda_{\varepsilon} t H_{0,0}(x;y)+ \mathrm{e}^{-\lambda_{\varepsilon} t}
\frac{\lambda_{\varepsilon} t^{2}}{2} \bigl({H}_{0,1}(x;y)+{H}_{1,0}(x;y) \bigr)
\\
&&\qquad{} +\mathrm{e}^{-\lambda_{\varepsilon}t} \frac{(\lambda_{\varepsilon}t)^{2}}{2} H_{2,0}(x;y) +\mathrm{O}
\bigl(t^{3}\bigr)
\\
&&\quad=\lambda_{\varepsilon} t H_{0,0}(x;y)+ \frac{t^{2}}{2} \bigl\{
\lambda_{\varepsilon} \bigl[{H}_{0,1}(x;y)+{H}_{1,0}(x;y)
\bigr]
 + \lambda _{\varepsilon}^{2} \bigl[H_{2,0}(x;y) - 2
H_{0,0}(x;y) \bigr] \bigr\}\\
&&\qquad{}+\mathrm{O}\bigl(t^{3}\bigr),
\end{eqnarray*}
where, in the first equality {above,} we had again used (\ref{FndBnd1})
to justify that
\[
\bbp\bigl(X_{t}(x)\geq{}x+y\vert N_{t}^{\varepsilon}=0
\bigr)=\bbp \bigl(X_{t}(\varepsilon,\varnothing,x)\geq{}x+y\bigr)=\mathrm{O}
\bigl(t^{3}\bigr)
\]
for $\varepsilon$ small enough.
The expressions in (\ref{MPTN}) follows from the fact that,
\begin{eqnarray}\label{EsyRelUs}
    \lambda_{\varepsilon} \bbp\bigl [
\gamma(x,J)\geq y \bigr]&=&\int_{y}^{\infty}\lambda_{\varepsilon}
\Gamma_{\varepsilon}(\zeta;x)\,\mathrm{d}\zeta=
    \int_{\{\zeta:\gamma(x,\zeta)\geq y\}}
h(\zeta)\phi_{\varepsilon}(\zeta)\,\mathrm{d}\zeta
\nonumber
\\[-8pt]
\\[-8pt]
&=:&\int_{y}^{\infty} g_{\varepsilon}(x;\zeta)\,\mathrm{d}\zeta\nonumber
\end{eqnarray}
for some function $g_{\varepsilon}(x;\zeta)$. Thus, for fixed $x\in\bbr$ and $y>0$,
\begin{equation}\label{EsyRelUs2}
    {\lambda_{\varepsilon} \Gamma_{\varepsilon}(y;x)=g_{\varepsilon}(x;y)}.
\end{equation}
Furthermore, by differentiation of the last equality in (\ref{EsyRelUs}) and using that $\gamma(x,0)=0$,
it follows that, for $\varepsilon>0$ small enough, $g_{\varepsilon}(x;y)$ admits the representation on
the right-hand side of (\ref{DfnLvyDty0}). Using  (\ref{EsyRelUs})--(\ref{EsyRelUs2}), it then follows
that
\begin{eqnarray*}
\lambda_{\varepsilon} H_{0,0}(x;y)&=&\int_{y}^{\infty}g(x;
\zeta )\,\mathrm{d}\zeta,
\\
\lambda_{\varepsilon} \bigl[\hat{H}_{0,1}(x;y)+\hat
{H}_{1,0}(x;y) \bigr] &=&\calJ_{1}(x;y),
\\
\lambda_{\varepsilon} \bigl[{H}_{0,1}(x;y)+{H}_{1,0}(x;y)
\bigr]& =&\calD(x;y)+\calJ_{1}(x;y), \\
\lambda_{\varepsilon}^{2}
\bigl[H_{2,0}(x;y) - 2 H_{0,0}(x;y) \bigr]&=&
\calJ_{2}(x;y),
\end{eqnarray*}
{with $\calD(x;y)$, $\calJ_{1}(x;y)$, and $\calJ_{2}(x;y)$ given as in
the statement of the theorem.}
This concludes the result of Theorem \ref{ThTail}.
\section{Proof of the expansion for the transition densities}\label{SecMnThPr2}

The following result will allow us to control the higher-order terms
{of the expansion (\ref{FndDcmp})} (see Appendix \ref{SecTcnLm} for
its proof):
%
\begin{lmma}\label{TLUOT}
Let\vspace*{-1pt}
%
%
\begin{equation}
\label{UpRmd} \bar{\calR}_{t}(x,y):=\mathrm{e}^{-\lambda_{\varepsilon}t} \sum
_{n=3}^{\infty
}\bbp\bigl(X_{t}(x)
\geq{}x+y\vert N_{t}^{\varepsilon}=n\bigr) \frac{(\lambda_{\varepsilon}t)^{n}}{n!}.
\end{equation}
Then, under the conditions of Theorem \ref{ThDsty}, there exists
$\varepsilon>0$ small enough as well as $t_{0}:=t_{0}(\varepsilon)>0$
and $B=B(\varepsilon)<\infty$ such that, for any $0<t<{}t_{0}$,\vspace*{-1pt}
\[
\bigl\llvert \partial_{y}\bar{\calR}_{t}(x,y)\bigr\rrvert
\leq{} B t^{3}.
\]
\end{lmma}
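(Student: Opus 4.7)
The plan is to reduce $\partial_{y}\bar{\calR}_{t}(x,y)$ to an expression involving the conditional density of $X_{t}(x)$ given $N_{t}^{\varepsilon}=n$ for $n\geq 3$, and then show this density is uniformly bounded using Lemma \ref{UEDSJ}. The Poisson tail $\bbp(N_{t}^{\varepsilon}\geq 3)=O(t^{3})$ then supplies the required $t^{3}$ factor.

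First, using that, given $N_{t}^{\varepsilon}=n$, the jump times are distributed as the order statistics of $n$ i.i.d.\ uniforms on $[0,t]$, I would rewrite
$$\bar{\calR}_{t}(x,y)=e^{-\lambda_{\varepsilon}t}\sum_{n=3}^{\infty}\lambda_{\varepsilon}^{n}\int_{0<s_{1}<\cdots<s_{n}<t}\bbp\bigl(X_{t}(\varepsilon,\{s_{1},\dots,s_{n}\},x)\geq x+y\bigr)\,ds_{1}\cdots ds_{n}.$$
Next, for each fixed $(s_{1},\dots,s_{n})$, I would condition on $\msf_{s_{n}^{-}}$ and integrate out the independent last jump size $J_{n}\sim\breve{h}_{\varepsilon}$. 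Since, given $\msf_{s_{n}^{-}}$ and $J_{n}$, the process $\{X_{r}(\varepsilon,\{s_{1},\dots,s_{n}\},x)\}_{r\geq s_{n}}$ coincides in law with $\{X_{r-s_{n}}(\varepsilon,\emptyset,X_{s_{n}^{-}}+\gamma(X_{s_{n}^{-}},J_{n}))\}_{r\geq s_{n}}$, this yields
$$\bbp\bigl(X_{t}(\varepsilon,\{s_{1},\dots,s_{n}\},x)\geq x+y\bigr)=\bbe\bigl[G_{t-s_{n}}\bigl(X_{s_{n}^{-}}(\varepsilon,\{s_{1},\dots,s_{n-1}\},x);x+y\bigr)\bigr],$$
where $G_{t}(z;u):=\bbp(X_{t}(\varepsilon,\emptyset,z+\gamma(z,J))\geq u)=\int_{u}^{\infty}\breve{p}_{t}(\xi;z,\varepsilon)\,d\xi$ with $\breve{p}_{t}(\xi;z,\varepsilon):=\int\widetilde{\Gamma}(\eta;z)\,p_{t}(\xi;\varepsilon,\emptyset,\eta)\,d\eta$, exactly the kernel appearing in Lemma \ref{UEDSJ}.

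Differentiating in $y$ gives $-\partial_{y}G_{t}(z;x+y)=\breve{p}_{t}(x+y;z,\varepsilon)$. By Lemma \ref{UEDSJ} with $k=0$ and the compact set $K=\{x+y\}$, there exist $\varepsilon,t_{0}>0$ and a constant $C<\infty$ such that $\sup_{z\in\bbr}|\breve{p}_{t'}(x+y;z,\varepsilon)|\leq C$ for all $t'\leq t_{0}$. Applied with $t'=t-s_{n}\in(0,t_{0}]$, this gives the uniform pointwise bound
$$\Bigl|\partial_{y}\bbp\bigl(X_{t}(\varepsilon,\{s_{1},\dots,s_{n}\},x)\geq x+y\bigr)\Bigr|\leq C,$$
valid for all $n\geq 3$ and all $0<s_{1}<\cdots<s_{n}<t\leq t_{0}$. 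Substituting back and using dominated convergence to interchange $\partial_{y}$ with the $ds$-integrals and the summation over $n$, we conclude
$$\bigl|\partial_{y}\bar{\calR}_{t}(x,y)\bigr|\leq C\,e^{-\lambda_{\varepsilon}t}\sum_{n=3}^{\infty}\frac{(\lambda_{\varepsilon}t)^{n}}{n!}=C\,\bbp(N_{t}^{\varepsilon}\geq 3)\leq B\,t^{3}$$
for some $B<\infty$ and $0<t\leq t_{0}$.

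The main obstacle is justifying the interchange of differentiation with expectation/integration: this rests entirely on obtaining the uniform-in-$z$ estimate on $\breve{p}_{t-s_{n}}(x+y;z,\varepsilon)$, which is precisely what Lemma \ref{UEDSJ} provides. A minor additional subtlety is the measurability and independence used in the conditioning step above, which follows from the strong Markov property of $\{X_{r}(\varepsilon,\{s_{1},\dots,s_{n}\},x)\}_{r\geq 0}$ at time $s_{n}$ together with the independence of $J_{n}$ from $\msf_{s_{n}^{-}}$ under the disintegration of the compound Poisson component.
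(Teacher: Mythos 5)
Your proof is correct and follows essentially the same route as the paper's: condition on the number of jumps and then on the jump times (order statistics of uniforms), condition on $\msf_{s_{n}^{-}}$ and integrate out the last jump size to arrive at the kernel $\breve{p}_{t-s_{n}}(x+y;z,\varepsilon)=\int\widetilde{\Gamma}(\zeta;z)p_{t-s_{n}}(x+y;\varepsilon,\emptyset,\zeta)d\zeta$, invoke Lemma~\ref{UEDSJ} with $k=0$ to bound it uniformly in $z$ for $t$ small, and then sum the Poisson tail to pick up the $t^{3}$. One small point of care: to justify interchanging $\partial_{y}$ with the expectation, the $ds$-integrals, and the sum over $n$, you need the uniform bound from Lemma~\ref{UEDSJ} on a compact \emph{neighborhood} of $x+y$ (say $K=[x+y-\delta,x+y+\delta]$), not just at the singleton $\{x+y\}$, since dominated convergence for the difference quotient requires a dominating bound valid for $y'$ near $y$; this is exactly what the paper does, and since Lemma~\ref{UEDSJ} holds for any compact $K$, the fix is immediate.
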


\begin{pf*}{Proof of Theorem \ref{ThDsty}}
Let us consider the terms corresponding to one and two ``large'' jumps
in (\ref{FndDcmp}). From (\ref{Cnd1Jmp}), (\ref{Dcmp3T1}), (\ref
{Dcmp2}), and (\ref{Dcmp3}), it follows that\vspace*{-1pt}
%
%
\begin{eqnarray}\label{Eq1Rmd}
&&\bbp\bigl(X_{t}(x)\geq{}x+y|N_{t}^{\varepsilon}=1\bigr)\nonumber\\
&&\quad=
H_{0,0}(x;y)+\frac
{t}{2} \bigl[H_{0,1}(x;y)+H_{1,0}(x;y)
\bigr]
\\
&&\qquad{} +\frac{1}{t}\int_{0}^{t} \bigl
\{{s^{2}} \calR _{s}^{2}(x;y)+(t-s)s
\calR_{s}^{3}(x;y)+{(t-s)^{2} \bbe\calR
^{1}_{t-s} \bigl(X_{s}(\varepsilon,
\varnothing,x);x,y \bigr)} \bigr\} \,\mathrm{d}s.\nonumber
\end{eqnarray}
Similarly, from (\ref{ScndOrdEq2}), (\ref{ScndOrdEq3}), and (\ref
{ExNdLt}), we have\vspace*{-1pt}
%
%
\begin{eqnarray}\label{Eq2Rmd}
&&\bbp\bigl(X_{t}(x)\geq{}x+y|N_{t}^{\varepsilon}=2\bigr)
\nonumber\\
&&\quad=
H_{2,0}(x;y) +\frac{2}{t^{2}}\int_{0}^{t}\int
_{s_{1}}^{t} \bigl\{s_{1}
\calR_{s_{1}}^{5}(x;y)+(s_{2}-s_{1})\bbe
\calR ^{6}_{s_{2}-s_{1}} \bigl(X_{s_{1}}(\varepsilon,
\varnothing,x);x,y \bigr)\qquad
\\[-1pt]
&&\hspace*{105pt}\quad{} +(t-s_{2})\bbe \calR^{4}_{t-s_{2}}
\bigl(X_{s_{2}}\bigl(\varepsilon,\{s_{1}\},x\bigr);x,y \bigr)
\bigr\} \,\mathrm{d}s_{2}\,\mathrm{d}s_{1}.
\nonumber
\end{eqnarray}
Equations (\ref{Eq1Rmd})--(\ref{Eq2Rmd}) show that in order for the derivatives\vspace*{-1pt}
\[
\hat{a}_{1}(x;y):=\frac{\partial}{\partial y}\bbp\bigl(X_{t}(x)\geq
{}x+y|N_{t}^{\varepsilon}=1\bigr), \qquad\hat{a}_{2}(x;y):=
\frac{\partial}{\partial y}\bbp\bigl(X_{t}(x)\geq {}x+y|N_{t}^{\varepsilon}=2
\bigr)
\]
to exist, it suffices that the partial {derivatives with respect to
$y$} of the functions $H_{i,j}(x;y)$ exist and also that the partial
{derivatives with respect to $y$} of the two types of functions,
{$\calR
_{t}^{i}(x;y)$ with $i=2,3,5$ and $\calR_{t}^{j}(w;x,y)$ with
$j=1,4,6$}, exist and are uniformly bounded {on $w\in\bbr$ and on a
neighborhood of $y$}. Furthermore, under the later boundedness
property, we will then {be able to conclude} that\vspace*{-1pt}
%
%
\begin{eqnarray}
\label{DrvA1} \hat{a}_{1}(x;y)&=& \frac{\partial H_{0,0}(x;y)}{\partial y}+\frac{t}{2}
\biggl[\frac{\partial H_{0,1}(x;y)}{\partial y}+\frac{\partial
H_{1,0}(x;y)}{\partial y} \biggr]+\mathrm{O}\bigl(t^{2}
\bigr)\qquad (t\to{}0),
\\[-1pt]
\hat{a}_{2}(x;y)&=& \frac{\partial H_{2,0}(x;y)}{\partial y}+\mathrm{O}(t)\qquad (t\to{}0).\label{DrvA2}
\end{eqnarray}
Note that {(\ref{DrvA1})--(\ref{DrvA2})} suffices to {obtain the
conclusion of the theorem, namely equation (\ref{SOExp}),} in light of
(\ref
{FndDcmp}), Theorem \ref{densityest-main}, and Lemma \ref{TLUOT}. {We
now proceed to verify the differentiability of the functions
$H_{i,j}(x,y)$ and the remainder terms.}

(1) \emph{Differentiability of $H_{i,j}(x;y)$}:
The {desired} differentiability essentially follows from Lemma \ref
{LmED}. Indeed, Lemma \ref{LmED}(2) implies that $\partial_{y}
H_{0,0}(x;y)=\partial_{y}\bbp [ \gamma(x,J)\geq{}y
]=-\Gamma
(y;x)$ and also, recalling the formula of $H_{0,1}(x,y)$ given in
equations~(\ref{NETBP})--(\ref{MPTN1}),
\begin{eqnarray*}
\partial_{y} H_{0,1}(x;y)&:=&\frac{\sigma^{2}(x)}{2} \biggl(-
\frac{\partial^{2}\Gamma(y; x)}{\partial x^{2}} +2\frac{\partial^{2} \Gamma(y;x)}{\partial y\, \partial x} -\frac{\partial^{2}\Gamma(y; x)}{\partial y^{2}} \biggr)
\\
&&{} +{b_{\varepsilon}(x)} \biggl(- \frac{\partial\Gamma(y; x)}{\partial x} +\frac{\partial\Gamma(y;x)}{\partial y} \biggr)
\\
& &{}+\int (\Gamma(y;x)-\Gamma\biggl(y-\gamma(x,\zeta);x+\gamma (x,\zeta )
\\
&&\hspace*{80pt}{} -\gamma(x,\zeta) \biggl(\frac{\partial\Gamma(y;x)}{\partial
y}-\frac
{\partial\Gamma(y;x)}{\partial x} \biggr)
\biggr)\bar {h}_{\varepsilon
}(\zeta) \,\mathrm{d}\zeta.
\end{eqnarray*}
Similarly, recalling the definition of $H_{1,0}(x;y)$ given in (\ref
{EqDfnH10a}),
\begin{eqnarray*}
\partial_{y} H_{1,0}(x;y)&:=&{\partial_{y} \bigl(
\Gamma (y;x){b_{\varepsilon}(x+y)}-(\partial_{\zeta}\Gamma ) (y;x)v(x+y)-
\Gamma (y;x)v'(x+y) \bigr)}
\\
&&{} + \int \bigl(\Gamma (y;x )-\Gamma \bigl(\bar\gamma (x+y,\zeta)-x; x \bigr)
\partial_{y}\bar{\gamma}(x+y,\zeta)
\\
&&\hspace*{21pt}{} -{\partial_{y} \bigl(\Gamma(y;x)\gamma(x+y,\zeta ) \bigr)} \bigr)
\bar{h}_{\varepsilon}(\zeta) \,\mathrm{d}\zeta.
\end{eqnarray*}
To compute $\partial_{y} H_{2,0}(x;y)$, note that
\begin{eqnarray*}
\frac{\partial}{\partial y} H_{2,0}(x;y)&=&\frac{\partial}{\partial y} \int\bbp \bigl(
\gamma\bigl(x+\gamma(x,\zeta_{1}),J_{2}\bigr)\geq{}y-\gamma
(x,\zeta_{1}) \bigr){h}_{\varepsilon}(\zeta_{1})\,\mathrm{d}
\zeta_{1}
\\
&=& \int\frac{\partial}{\partial y}\int_{y-\gamma(x,\zeta
_{1})}^{\infty} \Gamma
\bigl(\zeta_{2};x+\gamma(x,\zeta_{1}) \bigr)\,\mathrm{d}\zeta
_{2}{h}_{\varepsilon}(\zeta_{1})\,\mathrm{d}\zeta_{1}
\\
&=& - \int\Gamma \bigl(y-\gamma(x,\zeta_{1});x+\gamma(x,\zeta
_{1}) \bigr){h}_{\varepsilon}(\zeta_{1})\,\mathrm{d}
\zeta_{1},
\end{eqnarray*}
where the second equality above {again follows from Lemma \ref
{LmED}(2)}. Finally, the representations in (\ref{MPTNDsty}) can be
deduced for $\varepsilon$ small enough from the relationships (\ref
{EsyRelUs})--(\ref{EsyRelUs2}).

(2) \emph{Boundedness of $\partial_{y} \mathcal{R}^{i}(w;x,y)$}:
Analyzing the remainder terms $\calR^{2}(x;y)$, $\calR^{3}_{t}(x;y)$,
$\calR^{5}_{t}(x;y)$, and $\calR^{6}_{t}(w;x,y)$, it transpires that it
suffices to show that $\partial_{y} L^{2}_{\varepsilon}H_{0}(w;x+y)$,
$\partial_{y} L_{\varepsilon}H_{0}(w;x+y)$, $\partial_{y}
L_{\varepsilon
}H_{1}(w;x+y)$, and $\partial_{y} L_{\varepsilon}H_{2}(w;x+y)$ exist
and are uniformly bounded in $w$ and $y$. From the definition of
$L_{\varepsilon}$ in (\ref{InfGenSmallJumps}), one can see that, for
any function $H(w;y)\dvtx \bbr^{2}\to\bbr$ in $C^{\infty}_{b}(\bbr^{2})$,
$\partial_{y}(L_{\varepsilon} H(w;y))$ exists and
\begin{eqnarray*}
\partial_{y}\bigl(L_{\varepsilon} H(w;y)\bigr)= L_{\varepsilon} (
\partial _{y}H) (w;y),\qquad \sup_{w,y}\bigl|
\partial_{y}L_{\varepsilon} H(w;y)\bigr|<\infty.
\end{eqnarray*}
{From Lemma \ref{LmED}(4)} and the relationship (\ref{IntStpDffH1}),
one can verify that $H_{0}(w;x+y),H_{1}(w;x+y),H_{2}(w;x+y)$ are
$C_{b}^{\infty}$ functions.

In order to show that $\partial_{y} \calR^{1}_{t}(w;x,y)$ and
$\partial
_{y}\calR^{4}_{t}(w;x,y)$ exist and are bounded, it suffices that the
remainder term $\breve{\calR}_{t}(z;\vartheta)$ of (\ref
{2ndExpOLJ}) is
differentiable with respect to $\vartheta$ and $\partial_{\vartheta
}\breve{\calR}_{t}(z;\vartheta)$ is bounded. The remainder term is
defined as in (\ref{DfnRmdCT}), which in turn is defined as the limit
as $\delta\to{}0$ of each of the four terms in (\ref{DcmCTDfnRmdTr}).
We will show that the limit {as $\delta\to{}0$} of the second term,
{which was therein denoted by $\bar{I}_{t}^{(2)}(z;\vartheta,\delta
,\varepsilon)$,} is indeed differentiable with respect to $\vartheta$
and its derivative is bounded. The other three terms can be dealt with
similarly. As {shown in the proof of Lemma \ref{Lmm2MnLm} (see (\ref
{EqGenlimterm}) and arguments before)}, the limit of {the} second term
in (\ref{DcmCTDfnRmdTr}) {can be expressed as the sum of terms of the
form $\int_0^1(1-\alpha)\tilde{I}_{\alpha t}(\vartheta
;z,\varepsilon
)\,\mathrm{d}\alpha$, where $\tilde{I}_{\alpha t}(\vartheta;z,\varepsilon)$ takes
one of the four generic {terms} listed in (\ref{CmplGenTrm0}). So, we
only need to show that each of these terms is} differentiable with
respect to $w$ and that their respective derivatives are bounded. The
{latter facts} will follow from Lemma \ref{UEDSJ} together with the
same arguments leading {to (\ref{BndI1})}.
\end{pf*}

\section{Proofs of other lemmas and additional needed results}\label
{SecTcnLm}

The following result is needed in order to prove Lemma \ref{UEDSJ}.\vspace*{-1pt}
%
\begin{lmma}\label{AuxBndDffrm}
Assume {that} the conditions \textup{(C1)--(C4)} of Section~\ref{SectIntro} are enforced. Let $\Phi_{t}\dvtx x\to X_{t}(\varepsilon
,\varnothing,x)$ be the diffeomorphism associated with the solution of
the SDE (\ref{SE2}). Then, for any $k\geq{}1$, $T<\infty${,} and
compact $K\subset\bbr$,\vspace*{-1pt}
%
%
\begin{eqnarray}
\label{CndNdJcb2b} \sup_{t\in(0,T]}\sup_{\eta\in K}\bbe
\biggl(\biggl\llvert \frac{\mathrm{d}^{i}
\Phi
_{t}^{-1}}{\mathrm{d}\eta^{i}}(\eta)\biggr\rrvert ^{k}
\biggr)<\infty,\qquad i=1,2.
\end{eqnarray}
\end{lmma}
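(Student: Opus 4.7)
My plan is to reduce bounds on derivatives of the inverse flow to classical moment estimates for the linear SDEs satisfied by the Jacobian flow and its second derivative. First, since $\Phi_{t}$ is a.s. a $C^{2}$-diffeomorphism under conditions (C2)-(C4), implicit differentiation of the identity $\Phi_{t}(\Phi_{t}^{-1}(\eta))=\eta$ yields
\begin{equation*}
\frac{d\Phi_{t}^{-1}}{d\eta}(\eta)=\mathbf{J}_{t}(\Phi_{t}^{-1}(\eta))^{-1},\qquad
\frac{d^{2}\Phi_{t}^{-1}}{d\eta^{2}}(\eta)=-\mathbf{K}_{t}(\Phi_{t}^{-1}(\eta))\,\mathbf{J}_{t}(\Phi_{t}^{-1}(\eta))^{-3},
\end{equation*}
where $\mathbf{J}_{t}(x):=\partial_{x}X_{t}(\varepsilon,\emptyset,x)$ (as in (\ref{JcbDfn1})) and $\mathbf{K}_{t}(x):=\partial_{x}^{2}X_{t}(\varepsilon,\emptyset,x)$. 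Together with Cauchy--Schwarz, it therefore suffices to control high moments of $\mathbf{J}_{t}(\Phi_{t}^{-1}(\eta))^{-1}$ and $\mathbf{K}_{t}(\Phi_{t}^{-1}(\eta))$.

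Second, I would differentiate the SDE (\ref{SE2}) in $x$ once and twice, obtaining a linear system of SDEs for $(\mathbf{J}_{t}(x),\mathbf{K}_{t}(x))$ whose coefficients are expressible in terms of the first and second partial derivatives of $b_{\varepsilon},\sigma,\gamma$ evaluated along $X_{t}(\varepsilon,\emptyset,x)$. Under (C2)-(C3) these coefficients are uniformly bounded, with the jump terms further satisfying a bound of the form $|\partial_{x}^{i}\gamma(\cdot,\zeta)|\leq C(|\zeta|\wedge 1)$ by (C2-a). A standard BDG--Gronwall estimate, of the same flavor as the one sketched in Remark \ref{div X-Lp bound}, then gives, for every $p\geq 1$ and every $T<\infty$,
\begin{equation*}
\sup_{x\in\mathbb{R}}\mathbb{E}\sup_{0\leq t\leq T}\Bigl[|\mathbf{J}_{t}(x)|^{p}+|\mathbf{J}_{t}(x)|^{-p}+|\mathbf{K}_{t}(x)|^{p}\Bigr]<\infty.
\end{equation*}
The non-degeneracy in (C4-i), namely $|1+\partial_{x}\gamma(x,\zeta)|\geq\delta$, is used precisely to handle the inversion of $\mathbf{J}_{t}$ in the presence of jumps.

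The main obstacle is the third step: transferring these uniform-in-$x$ bounds to bounds for the compositions with the random point $\Phi_{t}^{-1}(\eta)$. My plan here is to invoke Kunita's theory of stochastic flows of diffeomorphisms, adapted to jump SDEs as developed in Chapter IV of \cite{BBG}. Under (C2)-(C4), the inverse map $\hat{\Phi}_{t}:=\Phi_{t}^{-1}$ is itself a smooth stochastic flow whose derivatives $\partial_{\eta}\hat{\Phi}_{t}(\eta)$ and $\partial_{\eta}^{2}\hat{\Phi}_{t}(\eta)$ solve linear SDEs of the same structural type as those for $\mathbf{J}_{t},\mathbf{K}_{t}$, with coefficients that remain uniformly bounded once composed along $\hat{\Phi}_{t}(\eta)$. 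The very same BDG--Gronwall argument, applied now to these SDEs, yields the estimate
\begin{equation*}
\sup_{t\in(0,T]}\sup_{\eta\in K}\mathbb{E}\bigl|\partial_{\eta}^{i}\hat{\Phi}_{t}(\eta)\bigr|^{k}<\infty,\qquad i=1,2,
\end{equation*}
which is precisely (\ref{CndNdJcb2b}). The compactness of $K$ enters only to guarantee that the initial conditions for these linear SDEs stay in a bounded range, allowing the moment bounds to be made uniform in $\eta$.
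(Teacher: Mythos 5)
Your first step (implicit differentiation of $\Phi_t\circ\Phi_t^{-1}=\mathrm{id}$) and your second step (uniform-in-$x$ moment bounds for $\mathbf{J}_t$, $\mathbf{J}_t^{-1}$, $\mathbf{K}_t$ via BDG--Gronwall) are both correct. You also correctly identify the real obstacle: the random insertion point $\Phi_t^{-1}(\eta)$ is not independent of $\mathbf{J}_t$ or $\mathbf{K}_t$, so you cannot simply plug it into the uniform-in-$x$ bounds. However, your resolution of that obstacle is exactly where the proof has a genuine gap. The claim that ``the inverse map $\hat{\Phi}_t:=\Phi_t^{-1}$ is itself a smooth stochastic flow whose derivatives solve linear SDEs of the same structural type, with coefficients that remain uniformly bounded once composed along $\hat{\Phi}_t(\eta)$'' is precisely the statement that needs to be established, and Chapter IV of \cite{BBG} does not provide it off the shelf. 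In the jump setting this is not an automatic consequence of Kunita-type flow theory.

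What the paper actually does at this point is construct the SDE for the inverse flow explicitly via time-reversal of semimartingales (Theorem VI.4.22 in \cite{Protter}): one shows that the time-reversed process $Y_t=\breve{X}_{(T-t)^-}$ satisfies a forward SDE driven by the reversed Brownian motion $\overline{W}^T$ and the reversed jump measure, with drift $-b_\varepsilon+\sigma'\sigma$, diffusion $\sigma$, and a new jump coefficient $\gamma_0(u,\zeta)=\bar\gamma(u,-\zeta)-u$, and then $\Phi_T^{-1}=\Psi_T$ where $\Psi$ is the flow of that backward SDE. Checking the hypotheses of the time-reversal theorem (adaptedness to both the forward and backward filtrations, the quadratic covariation condition (\ref{quacov}), reversibility of $W$) is real work, as is verifying that the new jump coefficient $\gamma_0$ inherits the boundedness and non-degeneracy needed for the moment estimate (via Lemma 10-29-c of \cite{BBG}). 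Your proof treats all of this as a citation, so the step that carries the entire argument is asserted rather than proved. To close the gap you would need to either carry out the time-reversal construction, or point to a specific theorem (with hypotheses matching (C1)--(C4)) that delivers the SDE for the inverse flow with uniformly bounded coefficients.
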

\begin{pf}
To simplify the notation, {we write {$\breve{X}(x)=\{\breve{X}_t(x)\}
_{t\in(0,T]}$} for $\{X_{t}(\varepsilon,\varnothing,x)\}_{t\geq0}$} and
{fix {$Y_t(x):=\breve{X}_{(T-t)-}(x)$ for $0\leq t<T$} and
$Y_{T}(x):=\breve{X}_{0}(x)=x$}. We follow a similar approach to that
in the proof of Lemma 3.1 in Ishikawa \cite{Ishikawa2001} based on
time-reversibility (see Section VI.4 in Protter \cite{Protter} for further
information). Recall that the time-reversal process of a c\'ad\'ag
process {$V=\{V_t\}_{0\leq t\leq T}$} is given by the c\'adl\'ag
process\vspace*{-1pt}
%
%
\begin{equation}
\label{reversal} {\overline{V}^T_t}={(V_{(T-t)-}-V_{T-}){
\mathbf1}_{0<t<T} +(V_0-V_{T-})\mathbf{1}_{t=T}}.
\end{equation}
Our main tool is Theorem VI.4.22 in Protter \cite{Protter}. The following
notation and definitions are useful for verifying the assumptions in
the theorem.

Throughout, {$\Phi_{t,T}(\cdot;\omega)\dvtx \bbr\to\bbr$} denotes the
diffeomorphisms defined by {$\Phi_{t,T}(x;\omega):=
X_{t,T}^{\varepsilon
}(x;\omega)$} where {$X_{t,T}^{\varepsilon}(x;\omega)$} is the unique
solution of the SDE\vspace*{-1pt}
%
%
\begin{eqnarray}
\label{flow} X_{t,T}^{\varepsilon}(x)&=&{x}+\int_t^T
\sigma\bigl(X_{t,u}^{\varepsilon
}(x)\bigr)\,\mathrm{d} W_u+\int
_t^T {b_{\varepsilon}\bigl(X_{t,u}^{\varepsilon
}(x)
\bigr)}\,\mathrm{d}u
\nonumber
\\[-8pt]
\\[-8pt]
\nonumber
&&{}+{\sum^{c}_{t< u\leq T}}\gamma
\bigl(X_{t,u^{-}}^{\varepsilon}(x),\Delta Z'_u
\bigr) ,
\end{eqnarray}
{where $\sum^c$ denotes the compensated sum.} The a.s. existence
of this diffeomorphisms is guaranteed from (\ref{NndegncyCnd}) as
stated in Remark \ref{JustNonDegCnd}.
As usual, {$\mathcal{F}_t=\mathcal{F}^0_t\vee\mathcal{N}$} and
{$\mathbb
{F}= (\mathcal{F}_t )_{0\leq t\leq T}$}, where {$\mathcal
{F}^0_t=\sigma\{W_u,Z'_u; u\leq t\}$ ($0\leq t\leq T$)} and $\mathcal
{N}$ are the $\bbp$-null sets of {$\mathcal{F}^0_T$}. {We also define
the backward filtration {$\tilde{\mathbb{H}}= (\mathcal
{H}^t
)_{0<t\leq T}$} by {$\mathcal{H}^t=\bigcap_{t<u\leq T}\bar{\mathcal
{F}}_{u}\vee\sigma\{\breve{X}_T\}$}, where {$(\bar{\mathcal
{F}}_{t})_{0\leq t\leq T}$} is defined analogously to {$ (\mathcal
{F}_t )_{0\leq t\leq T}$} {by} $W$ and $Z'$ replaced with their
reversal processes {$\bar{W}^{T}$} and {$\bar{Z'}^{T}$}.}

We are ready to show the assertions of the lemma. First, note that, by
the uniqueness of the solution of (\ref{flow}), {$\breve{X}_T(x)=\Phi
_{t,T}(\breve{X}_t(x))$}. Thus, {$\breve{X}_t(x)=\Phi
^{-1}_{t,T}(\breve
{X}_T(x))\in\mathcal{H}^{T-t}$} and, of course, {$\breve{X}_t(x)\in
\mathcal{F}_t$}, so that {$\sigma({\breve{X}_{t}(x)})\in\mathcal
{F}_t\wedge\mathcal{H}^{T-t}$}. Also, by It\^o's formula, the quadratic
covariation of {$W=\{W_t\}_{0\leq t\leq T}$} with {$\sigma(\breve
{X}):=\{\sigma(\breve{X}_t(x))\}_{0\leq t\leq T}$} is given by 
%
%
\begin{equation}
\label{quacov} \bigl[\sigma(\breve{X}),W \bigr]_t=\int
_0^t\sigma'\bigl(\breve
{X}_u(x)\bigr)\sigma\bigl(\breve{X}_u(x)\bigr)\,\mathrm{d}u=\int
_0^t\sigma'\bigl(Y_{T-u}(x)
\bigr)\sigma \bigl(Y_{T-u}(x)\bigr)\,\mathrm{d}u.
\end{equation}
Finally, recalling that {$W=\{W_t\}_{0\leq t\leq T}$} is {an} $(\mathbb
{F},\tilde{\mathbb{H}})$-reversible semimartingale (cf. Theorem~VI.4.20
in Protter \cite{Protter}), the assumptions of Theorem~VI.4.22 in
Protter \cite
{Protter} are satisfied with $\sigma(\breve{X})$ and $W$ in place of
$H$ and $Y$, respectively. By the theorem, we have
\begin{eqnarray*}
\overline{\int_0^{\cdot}\sigma\bigl(
\breve{X}_{u}(x)\bigr)\,\mathrm{d}W_u}^{
T}_{ t}+
\overline{ \bigl[\sigma(\breve{X}),W \bigr]}_{ t}^{
T}=\int
_0^t\sigma\bigl(\breve{X}_{T-u}(x)
\bigr)\,\mathrm{d}\bar{W}_u^T,
\end{eqnarray*}
or equivalently, by (\ref{quacov}) and the change of variable {$v=T-u$},
%
%
\begin{eqnarray}
\label{thm22} {\overline{\int_0^{\cdot}
\sigma\bigl(\breve {X}_{u-}(x)\bigr)\,\mathrm{d}W_u}
^{ T}_{ t}-\int_0^t
\sigma'\bigl(Y_v(x)\bigr)\sigma\bigl(Y_v(x)
\bigr)\,\mathrm{d}v=\int_0^t\sigma \bigl(Y_u(x)
\bigr)\,\mathrm{d}\overline{W}^T_u}.
\end{eqnarray}
Omitting for simplicity the dependence of the processes on $x$, the
first term on the left- hand side of (\ref{thm22}) can be written as
\begin{eqnarray*}
&&\overline{\breve{X}_{\cdot}-x-\int_0^{\cdot
}{b_{\varepsilon
}(
\breve{X}_{u-})}\,\mathrm{d}u-{\sum^c_{0<u\leq\cdot}}
\gamma\bigl(\breve {X}_{u-},\Delta Z'_{u}
\bigr)}^{ T}_{ t}
\\
&&\quad =\breve{X}_{(T-t)-}-\breve{X}_{T-}+\int_{T-t}^{T}b(
\breve {X}_u)\,\mathrm{d}u+{{\sum^c_{T-t\leq u< T}}
\gamma\bigl(\breve{X}_{u^{-}},\Delta Z'_{u}
\bigr)}
\\
&&\quad =Y_t-Y_0+\int_0^t
{b_{\varepsilon}({Y_{v}})}\,\mathrm{d}v+{{\sum^c_{0<
v\leq t}}
\gamma\bigl(\breve{X}_{(T-v)^{-}},\Delta Z'_{T-v}
\bigr)},
\end{eqnarray*}
where the last equality above is from the change of variable {$v=T-u$}.
Then, (\ref{thm22}) implies that
\begin{eqnarray*}
Y_t(x)&=&Y_0(x)-\int_0^t
{b_{\varepsilon}\bigl(Y_v(x)\bigr)}\,\mathrm{d}v+ \int_0^t
\sigma '\bigl(Y_v(x)\bigr)\sigma\bigl(Y_v(x)
\bigr)\,\mathrm{d}v+ \int_0^t\sigma\bigl(Y_v(x)
\bigr)\,\mathrm{d}\overline{W}^T_v
\\
&&{} -{\sum^c_{{0< v\leq t}}}\gamma\bigl(
\breve{X}_{(T-v)^{-}}(x),\Delta Z'_{T-v}\bigr),\qquad
Y_{0}(x)=\breve{X}_{T^{-}}(x).
\end{eqnarray*}
{Let us write the jump component of $Y$ in a more convenient way. To
this end, note {that, since} {$\breve{X}_{(T-v)^{-}}(x)+\gamma(\breve
{X}_{(T-v)^{-}}(x),\Delta Z'_{T-v})=\breve{X}_{T-v}(x)$}, one can
express {$\breve{X}_{(T-v)^{-}}(x)$} in terms of the inverse $\bar
{\gamma}(u,\zeta)$ of the mapping $z\to u:=z+\gamma(z,\zeta)$ as follows
\[
Y_{v}(x)=\breve{X}_{(T-v)^{-}}(x)=\bar{\gamma}\bigl(\breve
{X}_{T-v}(x),\Delta Z'_{T-v}\bigr)=\bar{\gamma}
\bigl(Y_{v^{-}}(x),\Delta Z'_{T-v}\bigr).
\]
Then,
\[
\Delta Y_{v}(x)=\bar{\gamma}\bigl(Y_{v^{-}}(x),\Delta
Z'_{T-v}\bigr)-Y_{v^{-}}(x)=\bar{\gamma}
\bigl(Y_{v^{-}}(x),-\Delta\bar {Z}'_{v}
\bigr)-Y_{v^{-}}(x)=
\gamma_{0}\bigl(Y_{v^{-}}(x),
\Delta\bar{Z}'_{v}\bigr),
\]
where $\gamma_0(u,\zeta):=\bar{\gamma}(u,-\zeta)-u$ and {$\bar
{Z}'_{v}:=\overline{Z'}^{ T}_{v}$} is the time-reversal process of
{$\{
Z'_{v}\}_{0\leq{}v\leq{}T}$}}. We conclude that
\begin{eqnarray*}
Y_t(x)&=&\breve{X}_{T^{-}}(x)-{\int_0^t
{b_{\varepsilon}\bigl(Y_v(x)\bigr)}\,\mathrm{d}v+ \int_0^t
\sigma'\bigl(Y_v(x)\bigr)\sigma\bigl(Y_v(x)
\bigr)\,\mathrm{d}v+ \int_0^t\sigma \bigl(Y_v(x)
\bigr)\,\mathrm{d}\overline{W}^T_v}
\\
& &{}+{\sum^c_{0< v\leq t}}\gamma_{0}
\bigl(Y_{v^{-}}(x),\Delta\bar{Z}'_{v}\bigr).
\end{eqnarray*}
Now, define the diffeomorphism {$\Psi_t\dvtx \bbr\to\bbr$} as {$\Psi
_{t}(\eta
):= \breve{Y}_{t}(\eta)$}, where {$\{\breve{Y}_{t}(\eta)\}_{0\leq
{}t\leq
{}T}$} is the solution of the SDE
\begin{eqnarray*}
\breve{Y}_{t}(\eta)&=& \eta-\int_0^t
{b_{\varepsilon}\bigl(\breve {Y}_v(\eta )\bigr)}\,\mathrm{d}v+\int
_0^t\sigma'\bigl(
\breve{Y}_v(\eta)\bigr)\sigma\bigl(\breve{Y}_v(\eta )
\bigr)\,\mathrm{d}v+\int_0^t\sigma\bigl(
\breve{Y}_v(\eta)\bigr)\,\mathrm{d}\overline{W}^T_v
\\
&&{} +{\sum^c_{0<v\leq t}}\gamma_0
\bigl(\breve{Y}_{v-}(\eta),\Delta \bar{Z}'_{v}
\bigr).
\end{eqnarray*}
Since, $\bbp$-a.s.,
\[
\Psi_{T}\bigl(\Phi_{T}(x)\bigr)=\Psi_{T}\bigl(
\breve{X}_{T}(x)\bigr)=\Psi_{T}\bigl(\breve
{X}_{T^{-}}(x)\bigr)=Y_{T}(x)=x\qquad \mbox{for all }x\in\mathbb
{R},T<\infty,
\]
it follows that, $\bbp$-a.s., {$\Psi_t(\eta)=\Phi^{-1}_{t}(\eta)$} for
all {$\eta\in\R$}.
Furthermore, $\{{\breve{Y}_t(\eta)}\}_{t\geq0}$ solves an SDE of the
form (6-2) in Bichteler, Gravereaux and Jacod \cite{BBG} with their
coefficients satisfying the
assumptions of Lemma 10-29 therein. Finally, by Lemma 10-29-c in
Bichteler, Gravereaux and Jacod \cite
{BBG}, with $n=2$ and $q=1$,
\begin{eqnarray*}
{\sup_{0< t\leq T}\sup_{\eta\in K}\bbe \biggl[\biggl
\llvert \frac
{d^{i}\Phi
_t^{-1}(\eta)}{d\eta^{i}}\biggr\rrvert ^k \biggr]=\sup
_{0< t\leq T}\sup_{\eta
\in K}\bbe \biggl[\biggl\llvert
\frac{\mathrm{d}^{i}\Psi_t(\eta)}{\mathrm{d}\eta^{i}}\biggr\rrvert ^k \biggr]=\sup_{0< t\leq T}
\sup_{\eta\in K}\bbe \biggl[ \biggl|\frac
{d^{i}\breve
{Y}_{t}(\eta)}{d\eta^{i}} \biggr|^k
\biggr]<\infty}
\end{eqnarray*}
for $i=1,2$.
\end{pf}
\begin{pf*}{Proof of Lemma \ref{UEDSJ}}
For simplicity, we write $\widetilde{\Gamma}(\zeta)=\widetilde
{\Gamma
}(\zeta;z)$ and only show the case $k=1$ (the other cases can similarly
be proved).
Using the same ideas as in the proof of Proposition I.2 in L\'eandre
\cite
{Leandre}, one can show that
\[
\int\widetilde{\Gamma}(\zeta)p_{t} (\eta;\varepsilon ,\varnothing ,
\zeta )\,\mathrm{d}\zeta =\bbe \bigl(H_{t}(\eta) \bigr),
\]
where
\[
{H_{t}(\eta):=\widetilde{\Gamma}\bigl(\Phi_{t}^{-1}(
\eta)\bigr)\frac{\mathrm{d} \Phi
_{t}^{-1}}{\mathrm{d}\eta}(\eta).} 
\]
Denoting {$\bar{J}_{t}(\eta):=d \Phi_{t}^{-1}(\eta)/d\eta$}, note that
\[
{H_{t}'(\eta)=\widetilde{\Gamma}'\bigl(
\Phi_{t}^{-1}(\eta)\bigr)\bar {J}_{t}(\eta
)^{2}+\widetilde{\Gamma}\bigl(\Phi_{t}^{-1}(\eta)
\bigr)\bar{J}_{t}'(\eta)},
\]
and, using (\ref{CndNdJcb2b}) and that $\widetilde{\Gamma}\in
C^{\infty
}_{b}$, it follows that
$\sup_{\eta\in K}\bbe\llvert H'_{t}(\eta)\rrvert ^{2}<\infty$. In
particular,
%
%
\begin{equation}
\label{CID0} \lim_{h\to{}0}\bbe \biggl( \frac{H_{t}(\eta+h)-H_{t}(\eta
)}{h} \biggr)
=\bbe \biggl(\lim_{h\to{}0} \frac{H_{t}(\eta+h)-H_{t}(\eta
)}{h} \biggr)=\bbe
H_{t}'(\eta),
\end{equation}
since the set of random variables $\{[H_{t}(\eta+h)-H_{t}(\eta)]/h\dvt |h|< 1\}$ is uniformly integrable. Indeed,
\[
\sup_{|h|\leq{}1}\bbe \biggl(\frac{H_{t}(\eta+h)-H_{t}(\eta
)}{h}
\biggr)^{2}=\sup_{|h|\leq{}1}\bbe \biggl(\int
_{0}^{1} H'_{t}(\eta+h
\beta )\,\mathrm{d}\beta \biggr)^{2}\leq \mathop{\sup_{|h|\leq{}1}}_{
\beta\in[0,1]}
\bbe \bigl( H'_{t}(\eta+h\beta) \bigr)^{2},
\]
which is finite in light of (\ref{CndNdJcb2b}).
Then, (\ref{CID0}) can be written as
\begin{eqnarray*}
{\frac{\mathrm{d}}{\mathrm{d}\eta}\int\widetilde{\Gamma}(\zeta)p_{t} (\eta ;
\varepsilon,\varnothing,\zeta )\,\mathrm{d}\zeta =\bbe \bigl(\widetilde{\Gamma}'
\bigl(\Phi_{t}^{-1}(\eta)\bigr) \bigl(\bar
{J}_{t}(\eta) \bigr)^{2} \bigr)+\bbe \bigl(\widetilde{
\Gamma }\bigl(\Phi _{t}^{-1}(\eta)\bigr)\bar{J}_{t}'(
\eta) \bigr)}.
\end{eqnarray*}
It is now clear that (\ref{WNADSJ2}) will hold true in light of (\ref
{CndNdJcb2b}).

{We now show the last assertion of the lemma. First note that,} {from
the non-negativity of $\tilde{\Gamma}$ and $p_t$, (\ref{WNADSJ2})
implies that there exist a constant $t_0>0$ small enough such that for
any $t<t_0$,
\begin{eqnarray*}
\sup_{z\in\mr}\sup_{\eta\in K}\int\bigl\llvert
\widetilde{\Gamma }(\zeta )p_t(\eta;\varepsilon,\varnothing,\zeta)
\bigr\rrvert \,\mathrm{d}\zeta<\infty,
\end{eqnarray*}
{and, thus,} $\widetilde{\Gamma}(\zeta)p_t(\eta;\varepsilon
,\varnothing
,\zeta)$ is uniformly integrable with respect to $\zeta$. {The latter
fact together with (\ref{WNADSJ2}) implies that}
\begin{eqnarray*}
\biggl\llvert \frac{\partial^k}{\partial\eta^k}\int\widetilde{\Gamma }(\zeta )
\frac{\partial p_t}{\partial\eta}(\eta;\varepsilon,\varnothing ,\zeta )\,\mathrm{d}\zeta\biggr\rrvert =
\biggl\llvert \frac{\partial^{k+1}}{\partial\eta
^{k+1}}\int \widetilde{\Gamma}(\zeta)p_t(
\eta,\varepsilon,\varnothing,\zeta )\,\mathrm{d}\zeta \biggr\rrvert <C
\end{eqnarray*}
for some $C>0$ and any $t<t_0$, $z\in\mr$ and $\eta\in K$. Then,
(\ref
{WNADSJ2}) is also true with $\partial p_t/\partial\eta$ in place of
$p_t$ {inside the integral of (\ref{WNADSJ2})}.}
\end{pf*}

\begin{lmma}\label{KPDOp}
Assume the conditions \textup{(C1)--(C4)} of Section~\ref{SectIntro} are
satisfied and let $\calD_\varepsilon$ and $\calI_{\varepsilon}$ be the
operators defined in (\ref{InfGenSmallJumps}). Define the {following
operators:}
\begin{eqnarray*}
\widetilde{\calD}_\varepsilon g(y)&:=&v(y)g''(y)+
\bigl(2v'(y)-b(y) \bigr)g'(y)+ \bigl(v''(y)-b'(y)
\bigr)g(y),
\\
\widetilde{\calI}_{\varepsilon}g(y)&:=&{\int \bigl(g\bigl(\bar{\gamma }(y,\zeta
)\bigr)\partial_{y}\bar{\gamma}(y,\zeta) -\bigl(1+
\partial_{y}\gamma(y,\zeta)\bigr)g(y)-g'(y)\gamma(y,
\zeta) \bigr)\bar {h}_{\varepsilon}(\zeta)\,\mathrm{d}\zeta},
\\
\widetilde{\calH}_{\varepsilon}g(y)&:=&{\int \biggl(\int_{\bar
\gamma
(y,\zeta)}^{y}g(
\eta)\,\mathrm{d}\eta-g(y) \gamma(y,\zeta) \biggr)\bar {h}_{\varepsilon}(\zeta)\,\mathrm{d}\zeta},
\end{eqnarray*}
where hereafter $\bar{\gamma}(u,\zeta)$ denotes the inverse of the
mapping $y\to u:=y+\gamma(y,\zeta)$ for a fixed $\zeta$ and whose
existence is guaranteed from condition \textup{(C4)}. Then, the following
assertions hold:
\begin{enumerate}
\item$\widetilde{\calD}_{\varepsilon}g$ is well defined and uniformly
bounded for any $g\in C^{2}_{b}$ and, furthermore, for any $f\in
C^{2}_{b}$ with compact support,
%
%
\begin{equation}
\label{DlDfOp} \int g(y)\calD_{\varepsilon} f(y)\,\mathrm{d}y=\int f(y)\widetilde{\calD
}_{\varepsilon}g(y)\,\mathrm{d}y.
\end{equation}
\item$\widetilde{\calI}_{\varepsilon}g$ is well defined and uniformly
bounded for any $g\in C^{1}_{b}$ and, additionally, if $g$ is
integrable, then, for any $f\in C^{1}_{b}$ {with compact support},
%
%
\begin{equation}
\int g(y)\calI_{\varepsilon} f(y)\,\mathrm{d}y=\int f(y)\widetilde{\calI
}_{\varepsilon}g(y)\,\mathrm{d}y.\label{DlInOp1}
\end{equation}
\item For any {$g\in C^{1}_{b}$ and $f\in C^{1}_{b}$ such that {$f'$
and $f''$} are integrable},
%
%
\begin{equation}
\int g(y)\calI_{\varepsilon} f(y)\,\mathrm{d}y=\int f'(y) {\widetilde{\calH
}_{\varepsilon}g(y)} \,\mathrm{d}y.\label{DlInOp2}
\end{equation}
\end{enumerate}
\end{lmma}
\begin{pf}
The dual relationships essentially follow from a combination of
integration by parts and change of variables. Let us show (\ref
{DlInOp2}). {First, we show that $\calI_{\varepsilon} f(y)$ is
integrable and, thus, the left-hand side of equation (\ref{DlInOp2}) is well
defined. To this end, we write $\calI_{\varepsilon} f(y)$ as
\begin{eqnarray*}
\calI_{\varepsilon} f(y)&=& \int\int_{0}^{1}
\bigl(f''\bigl(y+\gamma (y,\zeta \beta)\bigr) \bigl(
\partial_{\zeta} \gamma(y,\zeta\beta) \bigr)^{2}+
f'\bigl(y+\gamma(y,\zeta\beta)\bigr)\partial^{2}_{\zeta}
\gamma(y,\zeta \beta )
\\
&&\hspace*{28pt}{} -f'(y)\partial^{2}_{\zeta} \gamma(y,
\zeta \beta ) \bigr) (1-\beta) \,\mathrm{d}\beta\bar{h}_\varepsilon(\zeta)
\zeta^{2} \,\mathrm{d}\zeta.
\end{eqnarray*}
Since $\gamma\in C^{\geq{}1}_{b}$, it is now evident that $\int\llvert \calI_{\varepsilon} f(y)\rrvert  \,\mathrm{d}y<\infty$ provided that
$\int\llvert f^{(k)}(y+ \gamma(y,\zeta\beta))\rrvert \,\mathrm{d}y<\infty$ for
$k=1,2$. To verify the latter fact, note that, by changing variables
from $y$ to $w:=\tilde\gamma(y,\zeta\beta)=y+\gamma(y,\zeta\beta)$,
\begin{eqnarray*}
\int\bigl\llvert f^{(k)}\bigl(y+\gamma(y,\zeta\beta)\bigr)\bigr\rrvert
\,\mathrm{d}y =\int\bigl\llvert f^{(k)}(w)\bigr\rrvert \frac{1}{|1+(\partial_{y} \gamma)(\bar\gamma
(w,\beta
\zeta),\zeta\beta)|}\,\mathrm{d}w <
\infty,
\end{eqnarray*}
due to (\ref{NndegncyCnd}).

Once we have show that $\calI_{\varepsilon} f(y)$ is integrable, we now
prove the equality in equation (\ref{DlInOp2}).}
{Let us first note that
%
%
\begin{eqnarray}
\label{DlRlF1}&& \int g(y)\calI_{\varepsilon} f(y)\,\mathrm{d}y
\nonumber
\\[-8pt]
\\[-8pt]
\nonumber
&&\quad=\lim_{\delta\to{}0}
\int g(y)\int_{|\zeta|\geq{}\delta} \bigl(f\bigl(y+\gamma(y,\zeta )
\bigr)-f(y)-f'(y)\gamma(y,\zeta) \bigr)\bar{h}_{\varepsilon}(\zeta
)\,\mathrm{d}\zeta \,\mathrm{d}y.
\end{eqnarray}
For each $\delta>0$, fix
\begin{eqnarray*}
A_{\delta}=\int g(y)\int_{|\zeta|\geq{}\delta} \bigl(f\bigl(y+\gamma
(y,\zeta )\bigr)-f(y) \bigr)\bar{h}_{\varepsilon}(\zeta)\,\mathrm{d}\zeta \,\mathrm{d}y,
\end{eqnarray*}
and note that
\[
A_{\delta}=\int\int_{|\zeta|\geq{}\delta}\int_{0}^{1}
g(y) f'\bigl(y+\gamma (y,\zeta\beta)\bigr) (\partial_{\zeta}
\gamma) (y,\zeta\beta) \,\mathrm{d}\beta \bar {h}_\varepsilon(\zeta) \zeta \,\mathrm{d}\zeta \,\mathrm{d}y.
\]
Changing variable from $y$ to $w:=\tilde\gamma(y,\zeta\beta
)=y+\gamma
(y,\zeta\beta)$ and applying Fubini, we get
\begin{eqnarray*}
A_{\delta}=\int f'(w) \int_{|\zeta|\geq\delta}\int
_{0}^{1} g\bigl(\bar \gamma(w,\zeta\beta)\bigr)
\frac{(\partial_{\zeta} \gamma)(\bar\gamma
(w,\beta
\zeta),\zeta\beta)}{1+(\partial_{y} \gamma)(\bar\gamma(w,\beta
\zeta
),\zeta\beta)} \,\mathrm{d}\beta\zeta\bar{h}_\varepsilon(\zeta)\,\mathrm{d}\zeta \,\mathrm{d}w.
\end{eqnarray*}
From the identity
\[
\partial_{\zeta}\int_{\bar\gamma(w,\zeta)}^{w} g(\eta)\,\mathrm{d}
\eta =-g\bigl(\bar \gamma(w,\zeta)\bigr)\partial_{\zeta}\bar\gamma(w,
\zeta)=g\bigl(\bar \gamma(w,\zeta )\bigr)\frac{(\partial_{\zeta} \gamma)(\bar\gamma(w,\zeta),\zeta
)}{1+(\partial_{y} \gamma)(\bar\gamma(w,\zeta),\zeta)},
\]
we can then write
\[
A_{\delta}=\int f'(w) \int_{|\zeta|\geq\delta}\int
_{\bar\gamma
(w,\zeta
)}^{w} g(\eta)\,\mathrm{d}\eta\bar{h}_\varepsilon(
\zeta)\,\mathrm{d}\zeta \,\mathrm{d}w.
\]
Plugging the previous formula in (\ref{DlRlF1}), we get
\[
\int g(y)\calI_{\varepsilon} f(y)\,\mathrm{d}y =\lim_{\delta\to{}0} \int
f'(y) \int_{|\zeta|\geq\delta} \biggl(\int
_{\bar\gamma
(y,\zeta
)}^{y} g(\eta)\,\mathrm{d}\eta-\gamma(y,\zeta)g(y) \biggr)
\bar {h}_{\varepsilon
}(\zeta)\,\mathrm{d}\zeta \,\mathrm{d}y.
\]
Let
\[
B_{\delta}(y):=\int_{|\zeta|\geq\delta}C(y,\zeta)\bar
{h}_{\varepsilon
}(\zeta)\,\mathrm{d}\zeta \qquad\mbox{with } C(y,\zeta):=\int
_{\bar\gamma(y,\zeta)}^{y} g(\eta)\,\mathrm{d}\eta-\gamma (y,\zeta)g(y),
\]
and note that, for $g\in C^{1}_{b}$,
%
%
\begin{equation}
\label{Eq2ndDerC} \partial_{\zeta}^{2} C(y,
\zeta)=-g'\bigl(\bar{\gamma}(y,\zeta)\bigr) \bigl(
\partial_{\zeta}\bar{\gamma}(y,\zeta) \bigr)^{2} -g\bigl(\bar{
\gamma}(y,\zeta)\bigr)\partial^{2}_{\zeta}\bar{\gamma }(y,
\zeta) -g(y)\partial^{2}_{\zeta}\gamma(y,\zeta),
\end{equation}
is bounded in light of Lemma \ref{LmED}(4). Then, writing
\[
\int f'(y) B_{\delta}(y)\,\mathrm{d}y=\int f'(y) \int
_{|\zeta|\geq\delta}\int_{0}^{1}
\partial^{2}_{\zeta}C(y,\zeta\beta) (1-\beta)\,\mathrm{d}\beta\zeta
^{2}\bar {h}_{\varepsilon}(\zeta)\,\mathrm{d}\zeta \,\mathrm{d}y,
\]
it is clear that, when $f'$ is integrable,
\begin{eqnarray*}
\lim_{\delta\to{}0}\int f'(y) B_{\delta}(y)\,\mathrm{d}y&=&
\int f'(y) \lim_{\delta
\to{}0} B_{\delta}(y)\,\mathrm{d}y
\\
&=& \int f'(y) \int \biggl(\int_{\bar\gamma(y,\zeta)}^{y}
g(\eta)\,\mathrm{d}\eta -\gamma (y,\zeta)g(y) \biggr)\bar{h}_{\varepsilon}(\zeta)\,\mathrm{d}\zeta \,\mathrm{d}y,
\end{eqnarray*}
which implies (\ref{DlInOp2}).}
\end{pf}

\begin{pf*}{Proof of Lemma \ref{TLUOT}}
By conditioning on the times of the jumps, which are necessarily
distributed as the order statistics of $n$ independent uniform $[0,t]$
random variables, we have
\begin{eqnarray*}
\bbp\bigl(X_{t}(x)\geq{}x+y\vert
N_{t}^{\varepsilon}=n\bigr)=\frac
{n!}{t^{n}}\int
_{\Delta}\bbp\bigl(X_{t}\bigl(\varepsilon,
\{s_{1},\ldots ,s_{n}\} ,x\bigr)\geq{}x+y
\bigr)\,\mathrm{d}s_{n}\cdots \,\mathrm{d}s_{1},
\end{eqnarray*}
where $\Delta:=\{(s_{1},\ldots,s_{n})\dvt 0<{}s_{1}<s_{2}<\cdots
<s_{n}<{}t\}
$. Hence, conditioning on $\msf_{s^{-}_{n}}$,
\begin{eqnarray*}
\bbp\bigl(X_{t}\bigl(\varepsilon,\{s_{1},
\ldots,s_{n}\},x\bigr)\geq{}x+y\bigr)&=&\bbe \bigl[\bbp
\bigl(X_{t}\bigl(\varepsilon,\{s_{1},\ldots,s_{n}
\},x\bigr)\geq{}x+y | {\msf_{s^{-}_{n}}} \bigr) \bigr]
\\
& =&\bbe \bigl[ G_{t-s_{n}} \bigl(X_{s_{n}}\bigl(\varepsilon,\{
s_{1},\ldots ,s_{n-1}\},x\bigr);x,y \bigr) \bigr],
\end{eqnarray*}
where $G_{t}(z;x,y)=\bbp (X_{t} (\varepsilon,\varnothing
,z+\gamma
(z,J) )\geq{}x+y )$.
In terms of the densities $p_{t}(\cdot;\varepsilon,\varnothing,\zeta)$
and $\widetilde{\Gamma}(\cdot;z)$ of $X_{t}(\varepsilon,\varnothing
,\zeta
)$ and $z+\gamma(z,J)$, respectively, we have that
\begin{eqnarray*}
G_{t}(z;x,y)&=&\int\int_{x+y}^{\infty}p_{t}(
\eta;\varepsilon ,\varnothing ,\zeta)\,\mathrm{d}\eta\widetilde\Gamma(\zeta;z) \,\mathrm{d}\zeta\\
&=& \int
_{x+y}^{\infty
}\int p_{t}(\eta;\varepsilon,
\varnothing,\zeta) \widetilde\Gamma(\zeta;z) \,\mathrm{d}\zeta \,\mathrm{d}\eta.
\end{eqnarray*}
From Lemma \ref{UEDSJ}, we know that there exists $\varepsilon$ small
enough such that, for any $\delta>0$, there exists
$B:=B(\varepsilon,\delta)<\infty$ and $t_{0}:=t_{0}(\varepsilon
,\delta
)>0$ {for which}
%
%
\begin{equation}
\label{UnifBnd3} \sup_{z\in\bbr}\sup_{\eta\in[x+y-\delta,x+y+\delta]}\int
p_{t} (\eta;\varepsilon,\varnothing,\zeta ) \widetilde{\Gamma }(
\zeta;z) \,\mathrm{d}\zeta\leq B
\end{equation}
for all $0<t<t_{0}$.
The {uniform bound (\ref{UnifBnd3})} allows us to interchange the
differentiation and the other relevant operations (integration,
expectation, etc.) so that
\[
\mathcal{G}_{t}^{(n)}(x,y):=\partial_{y}\bbp
\bigl(X_{t}(x)\geq {}x+y\vert N_{t}^{\varepsilon}=n
\bigr)
\]
can be written as
\begin{eqnarray*}
\mathcal{G}_{t}^{(n)}(x,y)&=&\frac{n!}{t^{n}}\int
_{\Delta}\partial _{y}\bbp\bigl(X_{t}\bigl(
\varepsilon,\{s_{1},\ldots,s_{n}\},x\bigr)\geq {}x+y
\bigr)\,\mathrm{d}s_{n}\cdots \,\mathrm{d}s_{1}
\\
&=&\frac{n!}{t^{n}}\int_{\Delta}\bbe \bigl[ \partial
_{y}G_{t-s_{n}} \bigl(X_{s_{n}}\bigl(\varepsilon,
\{s_{1},\ldots,s_{n-1}\},x\bigr);x,y \bigr)
\bigr]\,\mathrm{d}s_{n}\cdots \,\mathrm{d}s_{1}
\\
&=&\frac{n!}{t^{n}}\int_{\Delta}\bbe \biggl[ \int
p_{t-s_{n}}(x+y;\varepsilon,\varnothing,\zeta) \widetilde\Gamma \bigl(
\zeta; X_{s_{n}}\bigl(\varepsilon,\{s_{1},\ldots,s_{n-1}
\},x\bigr)\bigr) \,\mathrm{d}\zeta \biggr]\,\mathrm{d}s_{n}\cdots \,\mathrm{d}s_{1}
\end{eqnarray*}
and also, for any $0<t<t_{0}$,
\[
\bigl|\partial_{y}\bbp\bigl(X_{t}(x)\geq{}x+y\vert
N_{t}^{\varepsilon
}=n\bigr)\bigr|\leq B.
\]
Using this {bound,}
\begin{eqnarray*}
\bigl\llvert \partial_{y}\bar{\calR}_{t}(x,y)\bigr\rrvert
&\leq& \mathrm{e}^{-\lambda
_{\varepsilon}t} \sum_{n=3}^{\infty}
\bigl\llvert \partial_{y}\bbp\bigl(X_{t}(x)\geq{}x+y
\vert N_{t}^{\varepsilon}=n\bigr) \bigr\rrvert \frac{(\lambda_{\varepsilon}t)^{n}}{n!}
\\
&\leq& B \mathrm{e}^{-\lambda_{\varepsilon}t} \sum_{n=3}^{\infty}
\frac{(\lambda_{\varepsilon}t)^{n}}{n!}\leq B\lambda_{\varepsilon
}^{3} t^{3}.
\end{eqnarray*}
The proof is then complete.
\end{pf*}

\begin{pf*}{Proof of Lemma \ref{l1}}
By conditioning on the times of the jumps, which are necessarily
distributed as the order statistics of $n$ independent uniform $[0,t]$
random variables, we have
\begin{eqnarray*}
\bbp\bigl(|X_{t}-x|\geq{}\log y\vert N_{t}^{\varepsilon
}=n
\bigr)=\frac
{n!}{t^{n}}\int_{\Delta}\bbp\bigl(\bigl|X_{t}
\bigl(\varepsilon,\{s_{1},\ldots ,s_{n}\} ,x\bigr)-x\bigr|\geq{}
\log y\bigr)\,\mathrm{d}s_{n}\cdots \,\mathrm{d}s_{1},
\end{eqnarray*}
where $\Delta:=\{(s_{1},\ldots,s_{n})\dvt 0<{}s_{1}<s_{2}<\cdots
<s_{n}<{}t\}
$. Hence, we only need to bound
\[
\sup_{n\in\mn, t\in[0,1]}\frac{1}{n!}\int_0^\infty
\mp \bigl(\bigl|X_t\bigl(\varepsilon ,\{s_1,\ldots,s_n
\},x\bigr)-x\bigr|\geq\log y\bigr)\,\mathrm{d}y
\]
uniformly.
By conditioning again,
\begin{eqnarray*}
&&\bbp\bigl(\bigl|X_{t}\bigl(\varepsilon,\{s_{1},
\ldots,s_{n}\},x\bigr)-x\bigr|\geq{}\log y\bigr)
\\
&&\quad=\bbe \bigl[\bbp
\bigl(\bigl|X_{t}\bigl(\varepsilon,\{s_{1},\ldots,s_{n}
\},x\bigr)-x\bigr|\geq {}\log y | {\msf_{s^{-}_{n}}} \bigr) \bigr]
\\
&&\quad \leq\bbe \bigl[\bbp \bigl[\bigl|X_{t-s_{n}}(\varepsilon ,\varnothing,z)-x\bigr|+\bigl|
\gamma(z,J)\bigr|\geq{}\log y \bigr] |_{z=X_{s_{n}}(\varepsilon,\{s_{1},\ldots,s_{n-1}\},x)} \bigr].
\end{eqnarray*}
{Recall the condition (C5)}, we have for some constant $M>0$ and
all $\lambda\leq3$
\[
\sup_x\me \mathrm{e}^{\lambda|\gamma(x,J)|}\sup_x\leq C
\int \mathrm{e}^{|3\gamma
(x,z)|}h(z)\,\mathrm{d}z\leq M<\infty.
\]
Now fix any positive constant $A$ and {$t\leq{}1$}, we have
\begin{eqnarray*}
\me \mathrm{e}^{|X_{t}(\varepsilon,\{s_{1},\ldots,s_{n}\},x)-x|} &=&\int_0^A\mp\bigl
\{\bigl|X_{t}\bigl(\varepsilon,\{s_{1},\ldots,s_{n}\},x
\bigr)-x\bigr|>\log y\bigr\} \,\mathrm{d}y
\\
&&{} +\int_A^\infty\mp\bigl\{\bigl|X_{t}\bigl(
\varepsilon,\{s_{1},\ldots,s_{n}\} ,x\bigr)-x\bigr|>\log y\bigr
\}\,\mathrm{d}y
\\
&\leq& A+{2Me^{({1}/{2})\lambda_1^2 k(1+\exp(\lambda_1\varepsilon)
)}}\frac{1}{A^{\alpha}}\frac{1}{\alpha} \bigl(\me
\mathrm{e}^{\lambda
_1|X_{s_{n}}(\varepsilon,\{s_{1},\ldots,s_{n-1}\},x)-x|} \bigr).
\end{eqnarray*}
{Above}, we used (\ref{martingale-est}) for the last inequality with
$\lambda=\lambda_1=1+\alpha$, where $0<\alpha<2$ is to be chosen later.
Now we iterate the above procedure by taking $\lambda_i=(1+\alpha)^i,
i=1,2,\ldots,n$, at each step, and choose
{$\lambda_n=(1+\alpha)^n=e.$}
We conclude that there exists a large enough constant $C$ independent
of $n$ and $t$ such that
\[
\int_0^\infty\mp\bigl\{\bigl|X_{t}\bigl(
\varepsilon,\{s_{1},\ldots,s_{n}\} ,x\bigr)-x\bigr|>\log y\bigr
\}\,\mathrm{d}y\leq C^n \biggl(\frac{1}{\alpha} \biggr)^n.
\]
In what follows, we only need to show
{$C^n (1/\alpha )^n/n!\to0$ as $n\rightarrow\infty$.}
Recall that {$\alpha=\mathrm{e}^{1/n}-1.$}
We have
\begin{eqnarray*}
\log \biggl[ C^n \biggl(\frac{1}{\alpha} \biggr)^n
\biggr]\sim n \biggl(C+\log \frac{1}{n} \biggr)\qquad \mathrm{as}\ n\rightarrow
\infty.
\end{eqnarray*}
On the other hand, we know {$\log n!\sim n^2/2$ as $n\rightarrow\infty.$}
The proof is then complete.
\end{pf*}
\end{appendix}

\section*{Acknowledgements}
The authors are grateful to an anonymous referee and the editors for
their constructive comments that greatly helped to improve the paper.
J.~E. Figueroa-L\'opez research was partially supported by Grants from
the US National Science Foundation (DMS-09-06919, DMS-11-49692).




\printhistory

\end{document}